\def\thefigure{\thesection.\@arabic\c@figure}
\def\fps@figure{h,t}
\def\thetable{\thesection.\@arabic\c@table}
\def\fps@table{h, t}
\newcommand\@dotsep{4.5}
\def\@tocline#1#2#3#4#5#6#7{\relax
	\ifnum #1>\c@tocdepth 
	\else
	\par \addpenalty\@secpenalty\addvspace{#2}%
	\begingroup \hyphenpenalty\@M
	\@ifempty{#4}{%
		\@tempdima\csname r@tocindent\number#1\endcsname\relax
	}{%
		\@tempdima#4\relax
	}%
	\parindent\z@ \leftskip#3\relax \advance\leftskip\@tempdima\relax
	\rightskip\@pnumwidth plus1em \parfillskip-\@pnumwidth
	#5\leavevmode\hskip-\@tempdima #6\relax
	\leaders\hbox{$\m@th
		\mkern \@dotsep mu\hbox{.}\mkern \@dotsep mu$}\hfill
	\hbox to\@pnumwidth{\@tocpagenum{#7}}\par
	\nobreak
	\endgroup
	\fi}
\newtheorem{theorem}{Theorem}
\newtheorem{corollary}[theorem]{Corollary}
\newtheorem{definition}[theorem]{Definition}
\newtheorem{example}[theorem]{Example}
\newtheorem{lemma}[theorem]{Lemma}
\newtheorem{proposition}[theorem]{Proposition}
\newtheorem{remark}[theorem]{Remark}
\numberwithin{theorem}{section}
\numberwithin{equation}{section}
\newcommand{\0}{{\bf 0}}
\renewcommand{\1}{{\bf 1}}
\newcommand{\ad}{{\rm ad}}
\newcommand{\Aut}{{\rm Aut}}
\newcommand{\Der}{{\rm Der}}
\newcommand{\cone}{{\mathbf C}}
\newcommand{\de}{{\rm d}}
\newcommand{\ee}{{\rm e}}
\newcommand{\End}{{\rm End}}
\newcommand{\ev}{{\rm ev}}
\newcommand{\GL}{{\rm GL}}
\newcommand{\Hom}{{\rm Hom}}
\newcommand{\id}{{\rm id}}
\newcommand{\ie}{{\rm i}}
\newcommand{\Ker}{{\rm Ker}\,}
\newcommand{\opn}{\operatorname}
\newcommand{\Prim}{{\rm Prim}}
\renewcommand{\Re}{{\rm Re}}
\newcommand{\spa}{{\rm span}\,}
\newcommand{\spec}{{\rm spec}}
\newcommand{\susp}{{\mathbf S}}
\newcommand{\Tr}{{\rm Tr}\,}
\newcommand{\CC}{{\mathbb C}}
\newcommand{\KK}{{\mathbb K}}
\newcommand{\NN}{{\mathbb N}}
\newcommand{\RR}{{\mathbb R}}
\newcommand{\TT}{{\mathbb T}}
\newcommand{\ZZ}{{\mathbb Z}}
\newcommand{\Ac}{{\mathcal A}}
\newcommand{\Bc}{{\mathcal B}}
\newcommand{\Cc}{{\mathcal C}}
\newcommand{\Dc}{{\mathcal D}}
\newcommand{\Gc}{{\mathcal G}}
\newcommand{\Hc}{{\mathcal H}}
\newcommand{\Ic}{{\mathcal I}}
\newcommand{\Jc}{{\mathcal J}}
\newcommand{\Kc}{{\mathcal K}}
\newcommand{\Oc}{{\mathcal O}}
\newcommand{\Pc}{{\mathcal P}}
\newcommand{\Tc}{{\mathcal T}}
\newcommand{\Uc}{{\mathcal U}}
\newcommand{\Vc}{{\mathcal V}}
\newcommand{\Xc}{{\mathcal X}}
\newcommand{\Yc}{{\mathcal Y}}
\newcommand{\Pg}{{\mathfrak P}}
\renewcommand{\gg}{{\mathfrak g}}
\newcommand{\hg}{{\mathfrak h}}
\renewcommand{\ng}{{\mathfrak n}}
\newcommand{\zg}{{\mathfrak z}}
\newcommand{\wtilde}[1]{\widetilde{#1}}
\newcommand{\what}[1]{\widehat{#1}}
\newcommand{\matt}[2]
{\ensuremath{\begin{pmatrix}
			{#1} & 0 \\
			   0 & {#2}
		\end{pmatrix}}}
\newcommand{\matto}[2]{\ensuremath{{#1}\oplus{#2}}}
\newcommand{\mattt}[3]
{\ensuremath{\begin{pmatrix}
			{#1} & & 0\\
			& {#2} & \\
			0& & {#3}
\end{pmatrix}}}
\title[Stably finiteness of Lie group $C^*$-algebras]{On stably finiteness for $C^*$-algebras of exponential solvable Lie groups}
\author{Ingrid Belti\c t\u a}
\author{Daniel Belti\c t\u a}
\address{Institute of Mathematics ``Simion Stoilow'' of the Romanian Academy,
P.O. Box 1-764, Bucharest, Romania}
\email{Ingrid.Beltita@imar.ro, ingrid.beltita@gmail.com}
\email{Daniel.Beltita@imar.ro, beltita@gmail.com}
\begin{document}

\begin{abstract} 
We study the link between stably finiteness and stably pro\-jec\-tion\-less-ness  for $C^*$-algebras of solvable Lie groups. 
We show that these two properties are equivalent if the dimension of the group is not divisible by $4$; otherwise, they are not necessarily equivalent. 
To provide examples proving the last assertion, we study
exponential solvable Lie groups that have 
nonempty finite open sets in their unitary dual. 
\\
\textit{2020 MSC:} Primary 22D25; Secondary 22E27.
\\
	\textit{Keywords:} {solvable Lie group; group $C^*$-algebra; stably finiteness; 
	stably projectionless; K-theory}
\end{abstract}

\maketitle


\section{Introduction}

Finite approximation properties of Lie group $C^*$-algebras, 
particularly quasidiagonality and AF-embeddability, have recently been investigated in \cite{BB18b} and \cite{BB21}, relating them to 
special topological properties of the their corresponding Lie algebras,  and of primitive ideal spaces. 
The study of the stably finiteness for larger classes of $C^*$-algebras of solvable Lie groups, which we initiate in this paper, requires new techniques that take into account their real structure (cf. \cite{Ros16}), as explained below. 

For every separable exact $C^*$-algebra $\Ac$, the lack of nonempty quasi-compact open subsets of the primitive ideal 
space $\Prim(\Ac)$ ensures that $\Ac$ is AF-embeddable, that is, $\Ac$ is isomorphic to a closed subalgebra of an 
AF-algebra. 
The converse also holds if $\Ac$ is traceless (cf. \cite[Cor.~B-C]{Ga20}), and for other $C^*$-algebras as well. 
For instance, if  $G$ is a generalized $ax+b$-group, 
then $C^*(G)$ is not traceless.  
Nevertheless, $C^*(G)$ is AF-embeddable if and only if $\Prim(G)$ has no nonempty  quasi-compact open  subsets. 
(See \cite{GKT92} and Example~\ref{ax+b} below.)  
Here and throughout this paper, we denote $\Prim(G):=\Prim(C^*(G))$ for any locally compact group~$G$, and a topological space is called quasi-compact if all its open covers have finite subcovers, without requiring any separation property.

The questions addressed in the present paper concern the relation between  existence of nonempty quasi-compact open subsets of $\Prim(G)$, stably finiteness or even AF-embeddability of $C^*(G)$ for solvable Lie groups $G$, 
and properties of their corresponding Lie algebras.

The new examples we study here are exponential solvable Lie groups, 
and our most detailed results apply to that class of groups,  
for two main reasons: 
Firstly, for all simply connected solvable Lie groups  with polynomial growth, we have  proved in \cite{BB21} that there exist no  nonempty  quasi-compact open subsets in the primitive ideal spaces of their $C^*$-algebras. 
Secondly, there exist large classes of exponential solvable Lie groups $G$ for which $\Prim(G)$ has finite (hence quasi-compact) open subsets or, more generally, the stabilized $C^*$-algebra $\Kc \otimes C^*(G)$ contains non-zero projections, that is, 
$C^*(G)$ is not stably projectionless. 
See for instance \cite{GKT92}, \cite{KT96}, or Section~\ref{Sect4} below. 

Our main results for 
solvable Lie groups~$G$ could be summarized as follows: 
\begin{enumerate}[(1)]
\item\label{item_1} If $\dim G\in 4\ZZ + 2$ or $\dim G\in 2 \ZZ +1$ then  $C^*(G)$ is  stably finite if and only if  it  is stably projectionless 
(Theorem~\ref{4n+2}). 
\item\label{item_2} If $\dim G\in 4\ZZ$, we prove by examples of exponential solvable Lie groups that
both situations can appear: 
if $\Prim(G) $ has finite open subsets, then $C^*(G)$ could be either AF-embeddable, 
  or not even stably finite 
 (Proposition~\ref{Heis} and Theorem~\ref{N6N15}). 
 Our examples seem to point to an interesting phenomenon that deserves to be investigated in the future. 
 Specifically, we construct families of Lie algebras $\gg_t$ whose structure constants depend continuously on a parameter $t\in T$, 
 and the set $\{t\in T\mid C^*(G_t)\text{ is stably finite}\}$ 
 is closed in the parameter space~$T$ in each single example we have considered. 
\end{enumerate}
We also take the first steps towards describing the exponential solvable Lie groups $G$ whose nilradical is 1-codimensional and for which $C^*(G)$ is AF-embeddable while $\Prim(G)$ contains finite open subsets (Corollary~\ref{cf-cor8}). 

The methods we use for obtaining some of these results owe much to the deep work \cite{Sp88} on AF-embeddings of $C^*$-algebra extensions. 
In addition, we have used the way the ``real'' structures of group  $C^*$-algebras (in the sense of \cite{Ka80} and \cite{Ros16}) are encoded in the K-theory, 
and also the propagation of stably finiteness of the group $C^*$-algebras through suitable deformations of the structure constants of the Lie algebras.

In more detail, this paper has the following contents. 
In Section~\ref{section1}  we obtain some technical results that involve the link between stably finiteness and existence of projections. 
We also investigate the interaction between ``real'' structures of $C^*$-algebras and Rieffel's construction 
of Connes' Thom isomorphism (Proposition~\ref{tsigns}) with an application to solvable Lie groups (Corollary~\ref{signs_solvable}). 
Then we establish that the continuous deformations preserve stably finiteness for certain continuous fields of $C^*$-algebras (Proposition~\ref{prop-cf2}), and  establish a  condition for the failure of stably finiteness in terms of open points of the primitive ideal space (Proposition~\ref{proj}). 
 
Section~\ref{section4k} begins with our main stably-finiteness result on solvable Lie groups whose dimension is not divisible by~4 (Theorem~\ref{4n+2}). 
We then turn to groups whose nilradical is 1-codimensional. 
These groups are basically determined by a derivation of a nilpotent Lie algebra $D\in\Der(\ng)$ and the main task is to describe the stably finiteness or AF-embeddability properties of the $C^*$-algebra of the corresponding group $N\rtimes_D\RR$ in terms of the spectrum of~$D$. 
In this connection, the technique of continuous fields allows us to establish a necessary condition
for stably finiteness in terms of the spectrum or the involved derivation (Theorem~\ref{prop-cf4}). 
We then obtain a technical result that explains the greater complexity of the behaviour of the groups whose dimension is divisible by~4 (Theorem~\ref{cf-prop7}), 
and then we draw a key consequence that is effective by way of decreasing the dimensions in the study of the specific examples (Corollary~\ref{cf-cor8}). 

Finally, in Section~\ref{Sect4}, we use the techniques of Sections \ref{section1} and \ref{section4k} in order to study the $C^*$-algebras of some specific exponential solvable Lie groups. 
We focus on groups whose nilradical is 1-dimensional and 2-step nilpotent 
since this is the simplest class of groups after the generalized $ax+b$-groups (Example~\ref{ax+b}).  
Our most complete results are obtained in the case when the nilradical is a Heisenberg group (Proposition~\ref{Heis}) or is a central extension of the free 6-dimensional 2-step nilpotent Lie group (Theorem~\ref{N6N15}), 
when we characterize the group $C^*$-algebra properties in terms of the spectral  data of the derivation involved in the construction.
In particular, when the Heisenberg group has dimension $4k+3$, there are situations when 
there exists an open point in the unitary dual of $H_n \rtimes \RR$,  but  its $C^*$-algebra 
is AF-embeddable.
We also discuss a class of Heisenberg-like groups associated to finite-dimensional real division algebras (Theorem~\ref{N6N17}). 
These last examples show, in particular,
that the  necessary condition for stably finiteness is 
not sufficient, that is, the lack of stably finiteness is not preserved by continuous deformations.

\subsection*{General notation}
We denote the Lie groups by upper case Roman letters and their Lie algebras by the corresponding
lower case Gothic letters. 
By a solvable/nilpotent Lie group we always understand
a connected simply connected solvable/nilpotent Lie group. 
An exponential Lie group is a Lie group $G$ whose exponential map
$\exp_G \colon\gg \to G$ is bijective. 
All exponential Lie groups are solvable. 
See for instance \cite{ArCu20} for more details. 
For any Lie algebra $\gg$ with its linear dual space $\gg^*$ we denote by $\langle\cdot,\cdot\rangle\colon\gg^*\times\gg\to\RR$ the corresponding duality pairing. 
The coadjoint isotropy subalgebra at any $\xi\in\gg^*$ is  $\gg(\xi):=\{X\in\gg\mid 
(\forall Y\in\gg)\ \langle \xi,[X,Y]\rangle=0\}$.

 \section{$K$-theoretic tools, stably finiteness, and  AF-embeddability}\label{section1}

This section contains technical results that play a key role in the next sections.

\subsection{Notation related to the construction of $K$-groups}
\label{App_K}

We start by reminding notions and notation needed in our paper.
Throughout this paper we use the notation from \cite{RLL00} and \cite{Di64}.

For any $C^*$-algebra $\Ac$ its unitization is $\widetilde{\Ac}:=\CC\1\dotplus A$. 
Also, for any integers $m,n\ge 1$ we denote by $M_{m,n}(\Ac)\subseteq M_{m,n}(\widetilde{\Ac})$ the $m\times n$ matrix spaces with entries in $\Ac$ and $\widetilde{\Ac}$, respectively, 
and for $m=n$ we write as usually $M_n(\Ac)\subseteq M_n(\widetilde{\Ac})$ for the corresponding matrix $C^*$-algebras. 
Moreover, $\1_n\in M_n(\widetilde{\Ac})$ 
the identity matrix and $\0_n\in M_n(\Ac)$ is the zero matrix.

We denote $\Pg(\Ac):=\{p\in A\mid p=p^2=p^*\}$ and $\Pg_n(\Ac):=\Pg(M_n(\Ac))$ for any $n\ge 1$. 
The disjoint union 
$$\Pg_\infty(\Ac):=\bigsqcup_{n\ge 1}\Pg_n(\Ac)$$ 
has the natural structure of a graded (noncommutative) semigroup 
with its operation $\oplus$ defined by 
$$\Pg_n(\Ac)\times\Pg_m(\Ac)\to\Pg_{n+m}(\Ac),\quad 
(p,q)\mapsto\matto{p}{q}:=\matt{p}{q}$$ 
for all $m,n\ge 1$. 
The Cartesian projection $s\colon \widetilde{\Ac}\to\CC\1(\subseteq\widetilde{\Ac})$ is extended to 
$s\colon M_n(\widetilde{A})\to M_n(\widetilde{\Ac})$, 
$(a_{ij})_{i,j}\mapsto (s(a_{ij}))_{i,j}$, for any $n\ge 1$. 
We recall the equivalence relation $\sim_0$ on $\Pg_\infty(\widetilde{\Ac})$ defined in the following way: 
If $p\in\Pg_m(\widetilde{\Ac})$ and $q\in\Pg_n(\widetilde{\Ac})$, 
then $p\sim_0 q$ if and only if there exists $v\in M_{m,n}(\widetilde{\Ac})$ with $v^*v=p$ and $vv^*=q$. 
There is a natural additive map $\Pg_\infty(\widetilde{\Ac}) \to K_0(\widetilde{\Ac})$, $p \mapsto [p]_0$.
Then, with the above notation, we have
$$K_0(\Ac)
=\{[p]_0-[s(p)]_0\mid p\in \Pg_\infty(\widetilde{\Ac})\}
\subseteq 
\{[p]_0-[q]_0\mid p,q\in \Pg_\infty(\widetilde{\Ac})\}
=K_0(\widetilde{\Ac}) $$
We also define 
$K_0(\Ac)^+:=\{[p]_0\mid p\in\Pg_\infty(\Ac)\}\subseteq K_0(\Ac)$ 
and, when we wish to emphasize the $C^*$-algebra $\Ac$, 
we write $[p]_{0,\Ac}:=[p]_0\in K_0(\Ac)$ for $p\in\Pg_\infty(\Ac)$. 

For any $C^*$-algebra $\Bc$ and any $*$-morphism $\varphi\colon \Ac\to \Bc$ there is a group morphism $K_0(\varphi)\colon K_0(\Ac)\to K_0(\Bc)$, 
$[p]_0-[s(p)]_0\mapsto[\widetilde{\varphi}(p)]_0-[s(\widetilde{\varphi}(p))]_0$.

We denote $\Uc(\widetilde{\Ac}):=\{u\in \widetilde{A}\mid u^*u=uu^*=\1\}$ and $\Uc_n(\widetilde{\Ac}):=\Uc(M_n(\widetilde{\Ac}))$ for every $n\ge 1$, which is the basic ingredient in the construction
of the group $K_1(\Ac)=K_1(\widetilde{\Ac})$.

\subsection{A $K$-theoretic condition for stably finiteness}
The next proposition  is a partial generalization of \cite[Lemma 1.5]{Sp88} and \cite[Prop. 5.1.5(iii)]{RLL00}, and
it is one of the main tools in this paper. 

\begin{proposition}\label{P1}
For every $C^*$-algebra $\Ac$ the following assertions are equivalent: 
\begin{enumerate}[{\rm(i)}]
	\item\label{P1_item1} 
	There exists 
$p\in\Pg_\infty(\Ac)\setminus\{0\}$ 
	 with $[p]_0=0\in K_0(\Ac)$. 
	\item\label{P1_item2}  
	There exist $r\ge 1$ and $v\in M_r(\widetilde{\Ac})$ with $vv^*=\1_r\ne v^*v$.
	\item\label{P1_item3} The $C^*$-algebra $\Ac$ is not stably finite.   
\end{enumerate}
\end{proposition}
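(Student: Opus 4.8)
The plan is to prove the chain of equivalences $(iii) \Leftrightarrow (ii) \Leftrightarrow (i)$, treating $(iii)\Leftrightarrow(ii)$ as essentially a restatement of the definition and putting the real work into $(ii)\Leftrightarrow(i)$.

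\textbf{Step 1: $(iii)\Leftrightarrow(ii)$.} By definition, $\Ac$ is stably finite if $M_n(\widetilde{\Ac})$ is finite for all $n\ge 1$, i.e.\ every isometry in $M_n(\widetilde{\Ac})$ is a unitary. So the failure of stable finiteness is precisely the existence of some $r\ge 1$ and $v\in M_r(\widetilde{\Ac})$ with $v^*v=\1_r$ but $vv^*\ne\1_r$. Passing from this $v$ to $v^*$ (which satisfies $vv^*=\1_r\ne v^*v$) gives the form stated in (ii); conversely (ii) with $v$ yields a non-unitary isometry $v^*$. One should note here that it suffices to work in the matrix algebras over $\widetilde{\Ac}$ rather than over $\Ac$, since finiteness of all $M_n(\Ac)$ is equivalent to finiteness of all $M_n(\widetilde{\Ac})$: an isometry in $M_n(\widetilde{\Ac})$ that is not unitary produces, after cutting down, a non-unitary isometry or a properly infinite projection that can be absorbed — in any case this is the standard fact behind \cite[Prop. 5.1.5]{RLL00}, and I would just cite it.

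\textbf{Step 2: $(ii)\Rightarrow(i)$.} Suppose $v\in M_r(\widetilde{\Ac})$ with $vv^*=\1_r\ne v^*v$. Set $q:=\1_r-v^*v\in\Pg_r(\widetilde{\Ac})$, which is nonzero. The Murray--von Neumann computation shows $q\sim_0 0$ is false in general, but rather: $v^*v\sim_0 vv^*=\1_r$ via $v$ itself, hence in $K_0(\widetilde{\Ac})$ we get $[q]_0=[\1_r]_0-[v^*v]_0=0$. The subtlety is that $q$ lies in $M_r(\widetilde{\Ac})$, not obviously in $M_r(\Ac)$, so it is not immediately an element of $\Pg_\infty(\Ac)$. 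To fix this I would apply the quotient map $\pi\colon\widetilde{\Ac}\to\CC\1$; since $\pi(v)$ is then an isometry in the matrix algebra over $\CC$, which is finite, $\pi(v)$ is unitary, so $\pi(q)=0$, i.e.\ $s(q)=0$ and therefore $q\in\Pg_r(\Ac)\setminus\{0\}$. Finally $[q]_{0,\Ac}=[q]_0-[s(q)]_0=[q]_0=0$ in $K_0(\Ac)$, giving (i).

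\textbf{Step 3: $(i)\Rightarrow(ii)$, the main obstacle.} Given $p\in\Pg_n(\Ac)\setminus\{0\}$ with $[p]_{0,\Ac}=0$, I need to manufacture a non-unitary isometry over $\widetilde{\Ac}$. From $[p]_0=[s(p)]_0=0$ in $K_0(\widetilde{\Ac})$ and the standard characterization of when a formal difference vanishes, there exists $m\ge 1$ with $p\oplus\1_m\sim_0 s(p)\oplus\1_m=\1_m$ (using $s(p)=0$), i.e.\ there is $w\in M_{n+m,m}(\widetilde{\Ac})$ with $w^*w=\1_m$ and $ww^*=p\oplus\1_m$. Enlarging to a square matrix $v\in M_{n+m}(\widetilde{\Ac})$ by padding $w$ with zero columns, we get $v^*v$ a projection of "rank $m$" and $vv^*=p\oplus\1_m$; since $p\ne 0$, we have $vv^*\ne v^*v$ while both are projections — but this is not yet an isometry. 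The genuinely delicate point, and what makes this only a \emph{partial} generalization of \cite[Lemma 1.5]{Sp88}, is converting this pair of equivalent-but-unequal projections into an honest isometry $v$ with $vv^*=\1_r$: one uses that $p\oplus\1_m\sim_0\1_m$ lets us find a partial isometry whose final space is all of $\widetilde{\Ac}^r$, and conjugating by a unitary in $M_r(\widetilde{\Ac})$ sending $p\oplus\1_m$ to $\1_{m}\oplus \1_{?}$ — i.e.\ using that a projection equivalent to $\1_m$ inside $M_r$ is itself unitarily equivalent to $\1_r$ only when $r=m$, which forces the bookkeeping. I expect this normalization step — carefully choosing the sizes and the unitary so that the resulting $v$ is a proper isometry rather than merely a partial isometry between unequal projections — to be the main technical hurdle, and I would handle it by the same cutting-and-padding argument as in \cite[Prop. 5.1.5(iii)]{RLL00}, adapted to keep track of the unitization.
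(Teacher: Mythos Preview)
Your Steps 1 and 2 are correct and essentially coincide with the paper's arguments for $(ii)\Leftrightarrow(iii)$ and $(ii)\Rightarrow(i)$; in particular, the observation that $s(v)$ is an isometry in $M_r(\CC)$, hence unitary, so $s(\1_r-v^*v)=\0_r$, is exactly what the paper does.

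Step 3, however, has a genuine gap: you reach the point $p\oplus\1_m\sim_0\0_n\oplus\1_m$ but then do not produce the isometry, and your final paragraph is a statement of intent rather than an argument (the sentence about ``a projection equivalent to $\1_m$ inside $M_r$ is itself unitarily equivalent to $\1_r$ only when $r=m$'' is confused and does not lead anywhere). The difficulty you are circling around is real: a partial isometry $u$ with $u^*u=\0_n\oplus\1_m$ and $uu^*=p\oplus\1_m$ cannot be completed to a co-isometry just by adding $\1_n\oplus\0_m$, because $\1_n\oplus\0_m$ is not orthogonal to $uu^*$.

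The paper resolves this with one clean trick you are missing: upgrade $\sim_0$ to unitary equivalence by doubling the size. From $p\oplus\1_m\sim_0\0_k\oplus\1_m$ in $\Pg_{k+m}(\widetilde{\Ac})$ one gets, by \cite[2.2.8(i)]{RLL00},
\[
e:=\0_k\oplus\1_m\oplus\0_{k+m}\ \sim_u\ f:=p\oplus\1_m\oplus\0_{k+m}
\quad\text{in }\Pg_{2(k+m)}(\widetilde{\Ac}),
\]
so there is a unitary $w\in\Uc_{2(k+m)}(\widetilde{\Ac})$ with $wew^*=f$. Now set $v:=we+(\1-f)$. Since $wew^*=f$ one has $(\1-f)we=0=ew^*(\1-f)$, and a two-line computation gives
\[
vv^*=f+(\1-f)=\1_{2(k+m)},\qquad
v^*v=e+(\1-f)=(\1_k-p)\oplus\1_m\oplus\1_{k+m}\ne\1_{2(k+m)},
\]
since $p\ne 0$. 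This is the explicit construction, and it replaces your vague appeal to ``cutting-and-padding as in \cite[Prop.~5.1.5(iii)]{RLL00}''.
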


\begin{proof}
\eqref{P1_item1}$\Rightarrow$\eqref{P1_item2}: 
There is $k\ge 1$ with $p\in\Pg_k(\Ac)$, hence $s(p)=\0_k$. 
Then, by \cite[4.2.2(ii)]{RLL00}, there exists $m\ge 1$ with 
$\matt{p}{\1_m}\sim\matt{\0_k}{\1_m}$ in $\Pg_{k+m}(\widetilde{\Ac})$. 
Furthermore, by \cite[2.2.8(i)]{RLL00}, we obtain 
$\mattt{p}{\1_m}{\0_{k+m}}\sim_u\mattt{\0_k}{\1_m}{\0_{k+m}}$ in $\Pg_{2(k+m)}(\widetilde{\Ac})$. 
That is, there exists $w\in\Uc_{2(k+m)}(\widetilde{\Ac})$ with 
$$\mattt{p}{\1_m}{\0_{k+m}}= w\mattt{\0_k}{\1_m}{\0_{k+m}}w^*
\in \Pg_{2(k+m)}(\widetilde{\Ac}).$$
Now, defining 
$$v:=w\mattt{\0_k}{\1_m}{\0_{k+m}}+
\mattt{\1_k-p}{\0_m}{\1_{k+m}}\in M_{2(k+m)}(\widetilde{\Ac})$$
we obtain 
\begin{align*}
\allowdisplaybreaks
vv^*&=w\mattt{\0_k}{\1_m}{\0_{k+m}}w^* 
+\mattt{\1_k-p}{\0_m}{\1_{k+m}} 
=\mattt{\1_k}{\1_m}{\1_{k+m}} \\
&=\1_{2(k+m)}
\end{align*}
and 
\allowdisplaybreaks
\begin{align*}
v^*v
=
&\mattt{\0_k}{\1_m}{\0_{k+m}}w^* w\mattt{\0_k}{\1_m}{\0_{k+m}} \\
&+ \mattt{\0_k}{\1_m}{\0_{k+m}}w^*\mattt{\1_k-p}{\0_m}{\1_{k+m}} \\
&+ \mattt{\1_k-p}{\0_m}{\1_{k+m}}w\mattt{\0_k}{\1_m}{\0_{k+m}} 
+  \mattt{\1_k-p}{\0_m}{\1_{k+m}} \\
=
&
\mattt{\0_k}{\1_m}{\0_{k+m}}+ \0_{2(k+m)}+\0_{2(k+m)}
+ \mattt{\1_k-p}{\0_m}{\1_{k+m}} \\
=
&
\mattt{\1_k-p}{\1_m}{\1_{k+m}} \\
\ne 
&\1_{2(k+m)}.
\end{align*}

\eqref{P1_item2}$\Rightarrow$\eqref{P1_item1}: 
For $r\ge 1$ and $v\in M_r(\widetilde{\Ac})$ with $vv^*=\1_r\ne v^*v$ 
we define $p:=\1_r-v^*v\in\Pg_r(\widetilde{\Ac})\setminus\{\0_r\}$. 
Since $vv^*=\1_r$, we obtain $\1_r=s(vv^*)=s(v)s(v)^*$ in $M_r(\CC\1)$, 
hence $s(v)^*s(v)=\1_r$. 
This implies $s(p)=\1_r-s(v^*v)=\1_r-s(v)^*s(v)=\0_r$ 
It follows that $p \in \Pg_r(\Ac)$.  
Moreover, $p+v^*v=vv^*$ and $p v^*v=\0_r$ hence, by \cite[3.1.7(iv)]{RLL00}, 
$[p]_0+[v^*v]_0=[vv^*]_0$ in $K_0(\widetilde{A})$. 
Here $v^*v\sim_0 vv^*$, hence $[v^*v]_0=[vv^*]_0$ in $K_0(\widetilde{A})$, and we then obtain $[p]_0=0\in K_0(\Ac)$.

\eqref{P1_item2}$\Rightarrow$\eqref{P1_item3}: 
This is just the definition of stably finite $C^*$-algebras. 
\end{proof}

We now obtain the following slight improvement of \cite[Lemma 1.5]{Sp88}. 

\begin{corollary}\label{Sp88_Lemma1.5}
If $\Ac$ is a $C^*$-algebra with a closed two-sided ideal $\Jc$ 
and the corresponding inclusion map $\varphi\colon\Jc\hookrightarrow\Ac$, 
then the following assertions hold: 
\begin{enumerate}[{\rm (i)}]
	\item\label{Sp88_Lemma1.5_item1} 
	If $\Ac$ is stably finite then $K_0(\Jc)^+\cap \Ker K_0(\varphi)=\{0\}$ 
	and $\Jc$ is stably finite. 
	\item\label{Sp88_Lemma1.5_item2}
	If  $K_0(\Jc)^+\cap \Ker K_0(\varphi)=\{0\}$ and both $\Jc$ and $\Ac/\Jc$ are stably finite, then $\Ac$ is stably finite. 
	\item\label{Sp88_Lemma1.5_item3} 
	If  $0\to \Jc\hookrightarrow\Ac\to\Ac/\Jc\to0$ is a split exact sequence and both $\Jc$ and $\Ac/\Jc$ are stably finite, then $\Ac$ is stably finite. 
\end{enumerate}
\end{corollary}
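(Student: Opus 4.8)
The plan is to derive all three parts from Proposition~\ref{P1} together with the elementary properties of $K$-theory groups recalled in Section~\ref{App_K}, treating \eqref{Sp88_Lemma1.5_item3} as an immediate consequence of \eqref{Sp88_Lemma1.5_item2}. For \eqref{Sp88_Lemma1.5_item1}, suppose $\Ac$ is stably finite. If $z\in K_0(\Jc)^+\cap\Ker K_0(\varphi)$, write $z=[p]_{0,\Jc}$ for some $p\in\Pg_\infty(\Jc)$, so that $K_0(\varphi)(z)=[p]_{0,\Ac}=0$. Since $\Ac$ is stably finite, Proposition~\ref{P1} (the negation of \eqref{P1_item3}$\Leftrightarrow$\eqref{P1_item1}) forces $p=0$, hence $z=0$. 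Stably finiteness of $\Jc$ follows because any $p\in\Pg_\infty(\Jc)\setminus\{0\}$ with $[p]_{0,\Jc}=0$ would, via the inclusion $M_r(\Jc)\subseteq M_r(\Ac)$ and $K_0(\varphi)$, produce a nonzero projection in $\Pg_\infty(\Ac)$ trivial in $K_0(\Ac)$, contradicting stably finiteness of $\Ac$ by Proposition~\ref{P1} again.

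For \eqref{Sp88_Lemma1.5_item2}, assume $K_0(\Jc)^+\cap\Ker K_0(\varphi)=\{0\}$ and that $\Jc$ and $\Ac/\Jc$ are stably finite; I want to verify condition \eqref{P1_item1} of Proposition~\ref{P1} fails for $\Ac$. So let $p\in\Pg_\infty(\Ac)\setminus\{0\}$ with $[p]_{0,\Ac}=0$; say $p\in\Pg_r(\Ac)$. Let $\pi\colon\Ac\to\Ac/\Jc$ be the quotient map. Then $[\pi(p)]_{0,\Ac/\Jc}=K_0(\pi)([p]_{0,\Ac})=0$, and since $\Ac/\Jc$ is stably finite, Proposition~\ref{P1} gives $\pi(p)=0$, i.e.\ $p\in\Pg_r(\Jc)$. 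Now the key point is that $[p]_{0,\Ac}=0$ does not immediately say $[p]_{0,\Jc}=0$; but it does say $[p]_{0,\Jc}\in\Ker K_0(\varphi)$, and $[p]_{0,\Jc}\in K_0(\Jc)^+$ by definition, so the hypothesis forces $[p]_{0,\Jc}=0$. Since $\Jc$ is stably finite, Proposition~\ref{P1} applied to $\Jc$ yields $p=0$, a contradiction. Hence no such $p$ exists and $\Ac$ is stably finite.

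Finally, \eqref{Sp88_Lemma1.5_item3} follows from \eqref{Sp88_Lemma1.5_item2}: a split exact sequence $0\to\Jc\xrightarrow{\varphi}\Ac\xrightarrow{\pi}\Ac/\Jc\to0$ with splitting $\sigma\colon\Ac/\Jc\to\Ac$ gives $K_0(\pi)\circ K_0(\sigma)=\id$, hence the short exact sequence $K_0(\Jc)\xrightarrow{K_0(\varphi)}K_0(\Ac)\xrightarrow{K_0(\pi)}K_0(\Ac/\Jc)$ is split short exact, so $K_0(\varphi)$ is injective; in particular $K_0(\Jc)^+\cap\Ker K_0(\varphi)=\{0\}$, and \eqref{Sp88_Lemma1.5_item2} applies. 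The main subtlety, and the only place any real argument is needed, is the careful bookkeeping in \eqref{Sp88_Lemma1.5_item2} between the classes $[p]_{0,\Jc}$ and $[p]_{0,\Ac}$ and the observation that positivity of $[p]_{0,\Jc}$ is automatic once one knows $p\in\Pg_r(\Jc)$; I expect no genuine obstacle, only the need to invoke the exact half-integer of Proposition~\ref{P1} at the right stage.
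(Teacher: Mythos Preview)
Your proof is correct and follows essentially the same route as the paper's: all three parts are deduced from Proposition~\ref{P1} in exactly the way you describe, with \eqref{Sp88_Lemma1.5_item3} reduced to \eqref{Sp88_Lemma1.5_item2} via injectivity of $K_0(\varphi)$ in the split case. The only cosmetic difference is that for the second claim in \eqref{Sp88_Lemma1.5_item1} the paper simply notes that a closed subalgebra of a stably finite $C^*$-algebra is stably finite, rather than rederiving this via Proposition~\ref{P1} as you do.
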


\begin{proof}
\eqref{Sp88_Lemma1.5_item1} 
If $x\in K_0(\Jc)^+\cap \Ker K_0(\varphi)$ then there exist $k\ge 1$ and $p\in\Pg_k(\Jc)$ with $x=[p]_{0,\Jc}$ and $0=(K_0(\varphi))(x)=(K_0(\varphi))([p]_{0,\Jc})=[p]_{0,\Ac}\in K_0(\Ac)$. 
Since  $\Ac$ is stably finite, it then follows by Proposition~\ref{P1} that $p=\0_k\in M_k(\Ac)$, hence $x=0\in K_0(\Jc)$. 
In addition, stably finiteness of $\Ac$ directly implies that 
its subalgebra~$\Jc$ is stably finite.

\eqref{Sp88_Lemma1.5_item2} 
This follows directly from \cite[Lemma 1.5]{Sp88},  
but we give here a proof using Proposition~\ref{P1}. 
Let $\psi\colon\Ac\to\Ac/\Jc$ be the quotient map. 
If $p\in\Pg_\infty(\Ac)$ with $[p]_{0,\Ac}=0$, then $0=(K_0(\psi))([p]_{0,\Ac})=[\psi(p)]_{0,\Ac/\Jc}$. 
Since $\Ac/\Jc$ is stably finite, it then follows by Proposition~\ref{P1} that $\psi(p)=0\in\Pg_\infty(\Ac/\Jc)$, that is, $p\in \Pg_\infty(\Jc)$. 
Then, denoting $x:=[p]_{0,\Jc}\in K_0(\Jc)^+$, we have $(K_0(\varphi))(x)=[p]_{0,\Ac}=0$, hence $x\in K_0(\Jc)^+\cap \Ker K_0(\varphi)=\{0\}$. 
Now, since $\Jc$ is stably finite, we obtain $p=\0_k\in M_k(\Jc)$ by Proposition~\ref{P1} again, hence $p=\0_k\in M_k(\Ac)$. 
Therefore a new application of Proposition~\ref{P1} shows that $\Ac$ is stably finite. 

\eqref{Sp88_Lemma1.5_item3} 
We have $\Ker K_0(\varphi)=\{0\}$ by the splitness hypothesis, 
hence Assertion~\eqref{Sp88_Lemma1.5_item2} applies. 
\end{proof}

The following simple facts were already noted in \cite[proof of Cor. D]{Ga20},
 but we prove them here for completeness. 

\begin{corollary}\label{rem-1.5} Let $\Ac$ be a $C^*$-algebra. 
\begin{enumerate}[{\rm (i)}]
\item \label{rem-1.5_i} If $\Ac$ is  stably projectionless, then it is stably finite.
Moreover, a $C^*$-algebra~$\Ac$ with $K_0(\Ac)=\{0\}$ is stably finite if and only if it is stably projectionless.
\item\label{rem-1.5_ii}  If $\Prim(\Ac)$ has no nonempty quasi-compact open subset, 
then $\Ac$ is stably projectionless. 
\item\label{rem-1.5_iii}  If  $\Prim(\Ac)$ has no nonempty quasi-compact open subsets, 
then $\Ac$ is stably finite. 
\end{enumerate}
\end{corollary}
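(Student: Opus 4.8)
The plan is to deduce all three assertions of Corollary~\ref{rem-1.5} from Proposition~\ref{P1} together with standard facts about projections in $C^*$-algebras with quasi-compact primitive ideal spaces.

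\textbf{Assertion \eqref{rem-1.5_i}.} First I would recall that a stably projectionless $C^*$-algebra $\Ac$ has, by definition, no nonzero projections in any matrix amplification, i.e.\ $\Pg_\infty(\Ac)=\{0\}$. In particular there is no $p\in\Pg_\infty(\Ac)\setminus\{0\}$ with $[p]_0=0$, so condition \eqref{P1_item1} of Proposition~\ref{P1} fails, and hence by the equivalence \eqref{P1_item1}$\Leftrightarrow$\eqref{P1_item3} the algebra $\Ac$ is stably finite. For the ``moreover'' part, if $K_0(\Ac)=\{0\}$ then \emph{every} $p\in\Pg_\infty(\Ac)$ automatically satisfies $[p]_0=0$; thus condition \eqref{P1_item1} holds precisely when $\Pg_\infty(\Ac)\ne\{0\}$, i.e.\ precisely when $\Ac$ is not stably projectionless. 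Invoking Proposition~\ref{P1} once more, $\Ac$ is not stably finite iff it is not stably projectionless, which is the desired equivalence.

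\textbf{Assertion \eqref{rem-1.5_ii}.} Here the key input is the standard fact (used already in \cite{Ga20}) that if $\Ac$ contains a nonzero projection $p$, then the hereditary subalgebra $p\Ac p$ is unital, so its primitive ideal space is quasi-compact; and $\Prim(p\Ac p)$ is homeomorphic to the open subset $\{\Pg\in\Prim(\Ac)\mid p\notin\Pg\}$ of $\Prim(\Ac)$. Hence a nonzero projection in $\Ac$ produces a nonempty quasi-compact open subset of $\Prim(\Ac)$. Applying this to matrix amplifications — noting $\Prim(M_n(\Ac))\cong\Prim(\Ac)$ — we conclude that if $\Prim(\Ac)$ has no nonempty quasi-compact open subset, then $\Pg_\infty(\Ac)=\{0\}$, i.e.\ $\Ac$ is stably projectionless. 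I would simply sketch this argument, citing the relevant statements in \cite{Di64} for the correspondence between projections, hereditary subalgebras, and open subsets of the primitive ideal space.

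\textbf{Assertion \eqref{rem-1.5_iii}} is then immediate: combining \eqref{rem-1.5_ii} with \eqref{rem-1.5_i}, if $\Prim(\Ac)$ has no nonempty quasi-compact open subset then $\Ac$ is stably projectionless, hence stably finite. The only real point of care in the whole proof is getting the open-subset/hereditary-subalgebra correspondence stated correctly for a possibly non-separable, non-unital $\Ac$ and checking it passes to matrix algebras; I expect this to be the main (though still routine) obstacle, and everything else is a direct appeal to Proposition~\ref{P1}.
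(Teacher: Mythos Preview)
Your proposal is correct and follows essentially the same approach as the paper: part~\eqref{rem-1.5_i} via Proposition~\ref{P1}(\eqref{P1_item1}$\Leftrightarrow$\eqref{P1_item3}), part~\eqref{rem-1.5_ii} by the contrapositive that a nonzero projection produces a nonempty quasi-compact open set in the primitive ideal space (the paper routes this through $\widehat{\Ac}$ and the map $\widehat{\Ac}\to\Prim(\Ac)$ rather than your hereditary-subalgebra argument, but the content is the same), and part~\eqref{rem-1.5_iii} by combining the two.
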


\begin{proof}
Assertion \eqref{rem-1.5_i} is a direct consequence of Proposition~\ref{P1} (\eqref{P1_item1}$\iff$\eqref{P1_item3}). 

\eqref{rem-1.5_ii}
If $\Ac$ is not stably projectionless, 
there exist 
$k\ge 1$ and $p\in\Pg_k(\Ac)\setminus\{0\}$, hence
there exists a nonempty quasi-compact open subset of $\widehat{M_k(\Ac)}$. 
(See for instance the proof of \cite[Ex. 4.8((vii)$\Rightarrow$(iv))]{BB21}.) 
Since $M_k(\Ac)=M_k(\CC)\otimes \Ac$, 
it follows that $\widehat{M_k(\Ac)}$ is homeomorphic to $\widehat{\Ac}$, 
by \cite[Th. B.45(b)]{RaWi98}. 
Therefore $\widehat{\Ac}$  has a nonempty quasi-compact open subset. 
Moreover, the canonical mapping $\widehat{\Ac}\to\Prim(\Ac)$, $[\pi]\mapsto\Ker\pi$, 
is continuous and open, hence it maps any nonempty quasi-compact 
open subset of $\widehat{\Ac}$ onto a nonempty quasi-compact open subset of $\Prim(\Ac)$.
It follows that  $\Prim(\Ac)$ has a nonempty quasi-compact open subset, 
which is a contradiction with the hypothesis. 

Assertion~\eqref{rem-1.5_iii}  follows immediately from  \eqref{rem-1.5_i}  and \eqref{rem-1.5_ii}. 
\end{proof}

\begin{remark}
\normalfont
In the special case of separable exact $C^*$-algebras, 
Corollary~\ref{rem-1.5} \eqref{rem-1.5_iii} is a weak version of \cite[Cor. B]{Ga20}, which gives AF-embeddability rather than just stable finiteness. 
\end{remark}

\subsection{Action of ``real'' structures on $K$-groups} 

The following terminology goes back to G.G. Kasparov \cite{Ka80}.
\begin{definition}\label{real} 
\normalfont
A \emph{``real'' structure} of a $C^*$-algebra $\Ac$  is an antilinear mapping $\tau\colon \Ac\to \Ac$,  satisfying $\tau(ab)=\tau(a)\tau(b)$, $\tau(a^*)=\tau(a)^*$, 
and $\tau(\tau(a))=a$ for all $a,b\in\Ac$. 
A \emph{``real'' $C^*$-algebra} is a $C^*$-algebra $\Ac$ with a fixed ``real'' structure~$\tau$.
We  denote $\overline{a}:=\tau(a)$ for all $a\in \Ac$ when no confusion arise.

A \emph{``real'' ideal} of $\Ac$ is a closed two-sided ideal $\Jc\subseteq \Ac$ 
that is invariant to the ``real'' structure of $\Ac$. 
In this case $\Jc$ is a ``real'' $C^*$-algebra with respect to the ``real'' structure $\tau\vert_\Jc\colon \Jc\to \Jc$. 

Let $\Ac$, $\Bc$ be $C^*$-algebras with ``real'' structures $\tau_\Ac$ and $\tau_\Bc$, respectvely. 
A  \emph{``real'' morphism} is a $*$-morphism $\psi\colon \Ac\to \Bc$ satisyfing $\psi(\tau_\Ac (a))=\tau_\Bc(\psi(a))$ for all $a\in \Ac$. 
\end{definition}

For every $n\ge 1$ the matrix algebra $M_n(\CC)$ has a canonical ``real'' structure;
if 
$\Ac$ is a ``real'' $C^*$-algebra then the $C^*$-algebra 
$M_n(\Ac)=M_n(\CC)\otimes \Ac$ has a canonical ``real'' structure $(a_{ij})\mapsto (\overline{a_{ij}})$. 
Moreover $\widetilde{\Ac}$ has a canonical ``real'' structure given by $\overline{a+ z\1}=\overline{a}+\overline{z}\1$ for every $a\in \Ac$ and $z\in\CC$. 

If $u,v\in\Uc_\infty(\widetilde{\Ac})$ then $\overline{u},\overline{v}\in \Uc_\infty(\widetilde{\Ac})$ and $\overline{\matto{u}{v}}=\matto{\overline{u}}{\overline{v}}\in\Uc_\infty(\widetilde{\Ac})$, and 
$$u\sim_1 v\iff \overline{u}\sim_1\overline{v}.$$
(See \cite[8.1.1]{RLL00}.) 
Therefore we obtain a well-defined group homomorphism 
$$K_1(\widetilde{\Ac})\to K_1(\widetilde{\Ac}),\quad [u]_1\mapsto \overline{[u]_1}:=[\overline{u}]_1$$
which is actually an isomorphism and is equal to its own inverse. 

If $p,q\in\Pg_\infty(\widetilde{\Ac})$ then $\overline{p},\overline{q}\in\Pg_\infty(\widetilde{\Ac})$ 
and 
$\overline{\matto{p}{q}}=\matto{\overline{p}}{\overline{q}}\in\Uc_\infty(\widetilde{\Ac})$, 
\begin{align*}
p\sim_0 q\iff \overline{p}\sim_0\overline{q}
\end{align*}
and $s(\overline{p})=\overline{s(p)}$. 
We then obtain a well-defined semigroup homomorphism 
$$\Dc(\widetilde{\Ac})\to \Dc(\widetilde{\Ac}), \quad [p]_\Dc\mapsto\overline{[p]_\Dc}:=[\overline{p}]_\Dc$$
which is actually an isomorphism and is equal to its own inverse. 
This further gives rise to a group isomorphism 
$$K_0(\widetilde{\Ac})=G(\Dc(\widetilde{\Ac}))\to G(\Dc(\widetilde{\Ac}))=K_0(\widetilde{\Ac}), \quad [p]_0\mapsto\overline{[p]_0}:=[\overline{p}]_0$$
which  is equal to its own inverse, and satisfies 
$$[s(\overline{p})]_0=\overline{[s(p)]_0}\text{ for all }p\in\Pg_\infty(\widetilde{\Ac}).$$
If $\psi\colon \Ac\to \Bc$ is a ``real'' morphism of ``real'' $C^*$-algebras, 
then its corresponding group morphism 
$K_j(\widetilde{\psi})\colon K_j(\widetilde{\Ac})\to K_j(\widetilde{\Bc})$ 
satisfies $K_j(\widetilde{\psi})(\overline{x})=\overline{K_j(\widetilde{\psi})(x)}$ for all $x\in K_j(\widetilde{\Ac})$ and $j=0,1$. 

In particular, for $j=0$, $\Bc=\{0\}$, and $\psi=0$, it follows that the subgroup  $K_0(\Ac)=\Ker(K_0(\widetilde{\psi}))$ is invariant to the automorphism $x\mapsto\overline{x}$ of $K_0(\widetilde{\Ac})$. 

\begin{lemma}\label{deltas}
Let $\psi\colon \Ac \to \Bc$ be  a ``real'' surjective morphism of ``real'' $C^*$-algebras.  
Denote $\Jc:=\Ker\psi$, regarded as a ``real'' ideal of $\Ac$ 
with its corresponding inclusion map $\varphi\colon \Jc\hookrightarrow \Ac$, 
and consider the six-term exact sequence 
\begin{equation}\label{hexagon}
\xymatrix{
K_0(\Jc) \ar[r]^{K_0(\varphi)} & K_0(\Ac) \ar[r]^{K_0(\psi)} & K_0(\Bc) \ar[d]^{\delta_0}\\ 
K_1(\Bc) \ar[u]^{\delta_1} & K_1(\Ac) \ar[l]_{K_1(\psi)} & K_1(\Jc) \ar[l]_{K_1(\varphi)}
}
\end{equation}
Then we have 
\begin{enumerate}[{\rm(i)}]
\item\label{deltas_item1} 
$\delta_0(\overline{x})=-\overline{\delta_0(x)}$ for all $x\in K_0(\Bc)$; 
\item\label{deltas_item2} 
$\delta_1(\overline{y})=\overline{\delta_1(y)}$ for all $y\in K_1(\Bc)$.\end{enumerate}
\end{lemma}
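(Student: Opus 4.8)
The plan is to trace the antilinear involution $x \mapsto \overline{x}$ through the explicit constructions of the index map $\delta_0$ and the exponential map $\delta_1$, and to check that the ``real'' structures on $\Ac$, $\Bc$, $\Jc$, and on the relevant matrix and suspension algebras are all compatible with these constructions. First I would recall from \cite{RLL00} that $\delta_0$ is built as follows: given $[p]_0 - [s(p)]_0 \in K_0(\Bc)$ with $p\in\Pg_n(\widetilde{\Bc})$, one lifts $p$ to a self-adjoint $a\in M_n(\widetilde{\Ac})$ with $\widetilde{\psi}(a)=p$, forms the unitary $u:=\exp(2\pi\ie a)\in\Uc_n(\widetilde{\Jc})$ (using $\ie$ as defined in the preamble for $\sqrt{-1}$), and sets $\delta_0([p]_0-[s(p)]_0):=-[u]_1\in K_1(\Jc)$; dually, $\delta_1$ is obtained from the Bott-type construction applied to a unitary lift. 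Since $\psi$ is a ``real'' morphism, if $a$ is a self-adjoint lift of $p$ then $\overline{a}$ is a self-adjoint lift of $\overline{p}$; the only subtlety is that $\exp(2\pi\ie\overline{a}) = \overline{\exp(-2\pi\ie a)}$ because conjugation is antilinear and hence flips the sign of $\ie$ in the exponent. This sign flip is precisely the source of the asymmetry between \eqref{deltas_item1} and \eqref{deltas_item2}.

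In more detail, for \eqref{deltas_item1} I would argue: with $u=\exp(2\pi\ie a)$ a unitary lift realizing $\delta_0([p]_0-[s(p)]_0)=-[u]_1$, we have $\overline{u}=\overline{\exp(2\pi\ie a)}=\exp(-2\pi\ie\,\overline{a})=\exp(2\pi\ie\,\overline{a})^{-1}$, which is the unitary attached to the self-adjoint lift $\overline{a}$ of $\overline{p}$ but \emph{inverted}. Since $[\overline{u}{}^{-1}]_1=-[\overline{u}]_1$ in $K_1(\Jc)$, and since $\overline{a}$ is a legitimate lift computing $\delta_0(\overline{[p]_0-[s(p)]_0})$, we get $\delta_0(\overline{x})=-[\overline{u}{}^{-1}]_1 = [\overline{u}]_1 = -\overline{(-[u]_1)} = -\overline{\delta_0(x)}$, using that $K_1$-conjugation is a homomorphism and commutes with inversion. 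For \eqref{deltas_item2}, $\delta_1$ involves no further exponential: given a unitary $u\in\Uc_n(\widetilde{\Bc})$, one lifts $\mathrm{diag}(u,u^*)$ to a unitary $w\in\Uc_{2n}(\widetilde{\Ac})$ and sets $\delta_1([u]_1):=[w\,\mathrm{diag}(\1_n,\0_n)\,w^*]_0 - [\mathrm{diag}(\1_n,\0_n)]_0$, which is a purely ``polynomial/multiplicative'' expression in $w$ and its adjoint; applying $\tau_\Ac$ to $w$ gives a unitary lift of $\mathrm{diag}(\overline{u},\overline{u}{}^*)$, and the projection formula is carried over verbatim since $\tau$ is a $*$-homomorphism. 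Hence $\delta_1(\overline{y})=\overline{\delta_1(y)}$ with no sign.

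Concretely the steps are: (1) fix the conventions for $\delta_0,\delta_1$ from \cite{RLL00} (Prop.~9.3.2 and Def.~12.2.1, say) and note these are the same conventions implicit in \eqref{hexagon}; (2) verify that $\tau_\Ac$ restricts to $\tau_\Jc$ on $\Jc=\Ker\psi$ and intertwines the relevant inclusions and quotients, so that all three ``real'' structures in the hexagon are genuinely compatible — this is essentially automatic from $\psi$ being a ``real'' morphism; (3) carry out the lift-and-compare argument above for $\delta_0$, tracking the single sign change produced by antilinearity in $\exp(2\pi\ie a)$; (4) do the same for $\delta_1$, observing the absence of any exponential and hence of any sign. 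I expect the main obstacle to be bookkeeping rather than conceptual: one must be careful that the antilinear conjugation, the passage to the unitization, and the stabilization $M_n(-)$ all commute in the way claimed (which was set up in the paragraphs preceding the lemma), and that the chosen lifts $a$ and $w$ can be replaced by $\overline{a}$ and $\overline{w}$ without leaving the permissible class of lifts — this last point follows because $\tau$ preserves self-adjointness, unitarity, and the image under $\widetilde{\psi}$. Once these compatibilities are in hand, both identities drop out of the explicit formulas, and in particular the minus sign in \eqref{deltas_item1} is seen to be forced by the factor $\ie$ in the exponential defining the index map.
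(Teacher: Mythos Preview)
Your proposal is correct and follows essentially the same approach as the paper: both trace the explicit RLL00 constructions of the boundary maps through the conjugation, noting that antilinearity flips the sign of $\ie$ in $\exp(2\pi\ie a)$ (yielding the minus sign for $\delta_0$) while the purely multiplicative formula for $\delta_1$ passes through unchanged. One terminological slip: you call $\delta_0$ the ``index map'' and $\delta_1$ the ``exponential map,'' but in the RLL00 conventions used here it is the reverse --- $\delta_0\colon K_0(\Bc)\to K_1(\Jc)$ is the exponential map and $\delta_1\colon K_1(\Bc)\to K_0(\Jc)$ is the index map --- though your actual descriptions of the two constructions are correct.
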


\begin{proof}
\eqref{deltas_item1}
For arbitrary $x\in K_0(\Bc)$ there exist $n\ge 1$ and  $p\in \Pg_n(\widetilde{\Bc})$ with $x=[p]_0-[s(p)]_0$. 
There also exist $a=a^*\in M_n(\widetilde{\Ac})$ and $u\in \Uc_n(\widetilde{\Jc})$ 
with 
$$\widetilde{\psi}(a)=p\; \text{ and }\; \widetilde{\varphi}(u)=\exp(2\pi\ie a)\in\Uc_n(\widetilde{A}),$$ 
and $\delta_0(x)=-[u]_1$ by \cite[12.2.2(i)]{RLL00}. 
Furthermore $\overline{x}=[\overline{p}]_0-[\overline{s(p)}]_0$ 
and $\overline{a}=\overline{a}^*\in M_n(\widetilde{A})$ satisfies $\widetilde{\psi}(\overline{a})=\overline{\widetilde{\psi}(a)}=\overline{p}$. 
On the other hand, 
$\widetilde{\varphi}(\overline{u}^*)
=\overline{\widetilde{\varphi}(u)}^*
=\exp(2\pi\ie \overline{a})$, 
hence, since $[u^*]_1=-[u]_1$ by \cite[8.1.3]{RLL00}, we obtain
$$\delta_0(\overline{x})=-[\overline{u}^*]_1=[\overline{u}]_1=\overline{[u]_1}
=-\overline{\delta_0(x)}.$$
\eqref{deltas_item2} 
For arbitrary $y\in K_1(\Bc)=K_1(\widetilde{\Bc})$ 
there exist $n\ge 1$ and  $u\in \Uc_n(\widetilde{\Bc})$ with $y=[u]_1$. 
There also exist $v\in \Uc_{2n}(\widetilde{\Ac})$ and 
$p\in\Pg_{2n}(\widetilde{\Jc})$ 
with 
$$\widetilde{\psi}(v)=\matt{u}{u^*}\; \text{ and }\;
\widetilde{\varphi}(p)=v\matt{\1_n}{\0_n}v^*, $$ 
and 
we have $\delta_1(y)=[p]_0-[s(p)]_0$ by \cite[9.1.4]{RLL00}. 
Furthermore $\overline{y}=[\overline{u}]_1$ 
and $\overline{u}\in \Uc_n(\widetilde{B})$, 
$\overline{v}\in \Uc_{2n}(\widetilde{A})$, 
$\overline{p}\in\Pg_{2n}(\widetilde{J})$ satisfy 
$$\widetilde{\psi}(\overline{v})
=\overline{\widetilde{\psi}(p)}
=\matt{\overline{u}}{\overline{u}^*}\; \text{ and }\; 
\widetilde{\varphi}(\overline{p})
=\overline{\widetilde{\varphi}(p)}
=\overline{v}\matt{\1_n}{\0_n}\overline{v}^*$$ 
hence 
$$\delta_1(\overline{y})
=[\overline{p}]_0-[s(\overline{p})]_0
=[\overline{p}]_0-[\overline{s(p)}]_0
=\overline{\delta_1(y)}.$$
This completes the proof.  
\end{proof}

\begin{remark}
\normalfont
For any locally compact group $G$ we regard its $C^*$-algebra $C^*(G)$ as a ``real'' $C^*$-algebra with its canonical ``real'' structure given by $\overline{f}(x):=\overline{f(x)}$ for every $f\in\Cc_c(G)\hookrightarrow C^*(G)$. 
See for instance \cite[\S 3.2]{Ros16}. 
\end{remark}

\begin{lemma}\label{dim1}
We have $[\overline{u}]_1=[u]_1\in K_1(C^*(\RR))\simeq\ZZ$ 
for all $u\in\Uc_\infty(C^*(\RR)^\sim)$. 
\end{lemma}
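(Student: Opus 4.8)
The plan is to reduce the statement to the triviality of an automorphism of $\ZZ$. As recalled just before Lemma~\ref{deltas}, the assignment $[w]_1\mapsto\overline{[w]_1}=[\overline w]_1$ is a group automorphism of $K_1(C^*(\RR))$, and $K_1(C^*(\RR))\simeq\ZZ$. Since $\Aut(\ZZ)=\{\pm\id\}$ and $\ZZ$ is torsion-free, it is enough to exhibit a single $w\in\Uc_\infty(C^*(\RR)^\sim)$ with $[w]_1\ne 0$ and $[\overline w]_1=[w]_1$: then the above automorphism fixes a nonzero element, hence is the identity, which yields the assertion for every $u\in\Uc_\infty(C^*(\RR)^\sim)$.

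To produce such a $w$, I would pass to the Fourier picture. The Fourier transform is a $*$-isomorphism $\mathcal{F}\colon C^*(\RR)\xrightarrow{\ \sim\ }C_0(\what\RR)$, and a one-line computation ($\mathcal{F}(\overline f)(\xi)=\overline{\mathcal{F}(f)(-\xi)}$ for $f\in\Cc_c(\RR)$) shows that it carries the canonical ``real'' structure of $C^*(\RR)$ to the ``real'' structure $\tau$ on $C_0(\what\RR)\cong C_0(\RR)$ given by $\tau(\phi)(\xi):=\overline{\phi(-\xi)}$. Thus $\mathcal{F}$ is a ``real'' isomorphism, and by the functoriality of ``real'' structures on $K$-theory recalled in this section it suffices to find $u\in\Uc(C_0(\RR)^\sim)$ with $[u]_1\ne 0$ in $K_1(C_0(\RR))\simeq\ZZ$ and $\tau(u)=u$. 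Here one takes the Cayley transform $u(\xi):=\dfrac{\xi-\ie}{\xi+\ie}$, which belongs to $\Uc(C_0(\RR)^\sim)$ (under the identification $C_0(\RR)^\sim\cong C(\RR\cup\{\infty\})$ it has value $1$ at $\infty$) and whose class generates $K_1(C_0(\RR))$, since $\dfrac{1}{2\pi\ie}\int_{\RR}\dfrac{u'(\xi)}{u(\xi)}\,\de\xi=1$. Finally $\tau(u)(\xi)=\overline{u(-\xi)}=\overline{\dfrac{-\xi-\ie}{-\xi+\ie}}=\dfrac{\xi-\ie}{\xi+\ie}=u(\xi)$, so $\tau(u)=u$ and in particular $[\tau(u)]_1=[u]_1$; this finishes the argument.

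I do not expect a genuine obstacle here. The only two points that need care are the correct transport of the ``real'' structure through the Fourier transform --- in particular the sign flip $\xi\mapsto-\xi$ it introduces --- and the fact that the chosen unitary represents a \emph{generator} of $K_1$ rather than a proper multiple; both are handled by the elementary winding-number computation above. One may also avoid choosing an explicit generator altogether: for any $u\in\Uc_n(C_0(\RR)^\sim)$ one has $\det(\tau(u))(\xi)=\overline{\det(u)(-\xi)}$, whose winding number equals that of $\det u$ because the substitution $\xi\mapsto-\xi$ and complex conjugation each reverse orientation; since the winding number of the determinant realizes the isomorphism $K_1(C_0(\RR))\simeq\ZZ$, this gives $[\tau(u)]_1=[u]_1$ directly.
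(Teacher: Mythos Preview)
Your proof is correct and follows essentially the same line as the paper's. Both arguments pass to the Fourier picture, identify the transported ``real'' structure (the paper writes it on $\Cc_0(\ie\RR)$ as $g\mapsto\overline{g(\overline{\,\cdot\,})}$, which is exactly your $\phi\mapsto\overline{\phi(-\,\cdot\,)}$ under $z=\ie\xi$), and then verify that the Cayley-type unitary generating $K_1$ is fixed by it; the paper does this after an additional change of variables to $\Cc(\TT)$, while you work directly in $C_0(\RR)^\sim$, but the generator you use is precisely the pullback of the paper's $\id_\TT$ along that change of variables.
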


\begin{proof}
We use the well-known $*$-isomorphism given by the Fourier transform 
$$F\colon C^*(\RR)\to\Cc_0(\ie\RR),\quad 
(F(f))(\ie\xi)=\int_{\RR}\ee^{-\ie\xi x}f(x)\de x 
\text{ if }f\in\Cc_c(\RR),$$
where we regard $\Cc_0(\ie\RR)$  as a commutative $C^*$-algebra with its pointwise operations and with the sup-norm, and ``real'' structure $\tau_0\colon \Cc_0(\ie\RR)\to\Cc_0(\ie\RR)$, 
$(\tau_0(g))(z)=\overline{g(\overline{z})}$ for all $g\in\Cc_0(\ie\RR)$ and $z\in\ie\RR$. 
We have 
$$F(\overline{f})(z)=\overline{F(f)(\overline{z})}\text{ for all }z\in\ie\RR\text{ and }f\in\Cc(\RR)$$
hence $F$ is a ``real'' isomorphism of ``real'' $C^*$-algebras. 
Consider the Cayley homeomorphism 
$$\kappa\colon \ie\RR\to\TT\setminus\{-1\},\quad \kappa(\ie\xi)=\frac{\ie\xi+1}{-\ie\xi+1}$$
with its inverse 
$\kappa^{-1}(w)=\frac{w-1}{w+1}$ for all $w\in\TT\setminus\{-1\}$; then
$$\overline{\kappa(z)}=\kappa(\overline{z})\text{ for all }z\in\ie\RR.$$ 
Therefore, the Cayley transform gives a ``real'' isomorphism from the unitization of the ``real'' $C^*$-algebra $\Cc_0(\ie\RR)$ 
onto $\Cc(\TT)$, when $\Cc(\TT)$ is endowed with the ``real'' structure 
$(\tau(h))(w)=\overline{h(\overline{w})}$ for all $h\in\Cc(\TT)$ and $w\in\TT$. 

For the above reasons it suffices to prove that the action of the ``real'' structure~$\tau$ on $K_1(\Cc(\TT))$ is the identity map. 
To this end, we recall that, if we denote $u:=\id_{\TT}\in\Cc(\TT,\TT)=\Uc_1(\Cc(\TT))$, then the mapping 
$$\ZZ\to K_1(\Cc(\TT)),\quad m\mapsto m[u]_1$$
is a group isomorphism, hence for every $ y= [w]_1 \in  K_1(\Cc(\TT))$ there is a unique $m \in \ZZ$ such that 
$y = m [u]_1$. 
On the other hand, it is clear that $\tau(u)=u$, hence $[\tau(w)]_1=m [u]_1\in K_1(\Cc(\TT))$, that is, $\overline{y}=y$ in $K_1(\Cc(\TT))$.
This shows that the action of $\tau$ on~$K_1(\Cc(\TT))$ is the identity map. 
\end{proof}

\begin{definition}\label{rdym}
\normalfont
A \emph{``real'' $C^*$-dynamical system} is a $C^*$-dynamical system $(\Ac,T,\alpha)$, where $\Ac$ is a ``real'' $C^*$-algebra, $T$ is a locally compact group, and $\alpha\colon T\to\Aut \Ac$, $t\mapsto\alpha_t$, satisfies $\alpha_t(\overline{a})=\overline{\alpha_t(a)}$ for all $t\in T$ and $a\in \Ac$. 
 \end{definition}

\begin{lemma}\label{rcrossed}
For every ``real'' $C^*$-dynamical system  $(\Ac,T,\alpha)$ 
its corresponding crossed product $\Ac\rtimes_\alpha T$ has a unique ``real'' structure satisfying $\overline{f}(t):=\overline{f(t)}$ for all $t\in T$ and $f \in\Cc_c(T,\Ac)$. 
\end{lemma}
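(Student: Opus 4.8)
The plan is to first recall the construction of the crossed product $\Ac\rtimes_\alpha T$ as the $C^*$-completion of the convolution $*$-algebra $\Cc_c(T,\Ac)$, and then to exhibit a ``real'' structure there by the obvious formula $\overline{f}(t):=\overline{f(t)}$ for $f\in\Cc_c(T,\Ac)$, checking that it is antilinear, multiplicative for convolution, compatible with the involution, involutive, and bounded for the universal norm; boundedness is what lets it extend to the completion, and density of $\Cc_c(T,\Ac)$ forces uniqueness. The main points to verify are the algebraic identities and the norm estimate.

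First I would spell out the conventions: for $f,g\in\Cc_c(T,\Ac)$ one has $(f*g)(t)=\int_T f(s)\,\alpha_s(g(s^{-1}t))\,\de s$ and $f^*(t)=\Delta(t)^{-1}\,\alpha_t(f(t^{-1})^*)$, where $\Delta$ is the modular function of $T$. Define $\tau\colon\Cc_c(T,\Ac)\to\Cc_c(T,\Ac)$ by $(\tau f)(t)=\overline{f(t)}$, using the ``real'' structure of $\Ac$ pointwise; this is clearly a well-defined antilinear involution preserving the support. For multiplicativity, using that each $\alpha_s$ is a ``real'' automorphism (Definition~\ref{rdym}) and that $\tau_\Ac$ is a continuous antilinear map, one computes
\begin{align*}
\tau(f*g)(t)
&=\overline{\int_T f(s)\,\alpha_s(g(s^{-1}t))\,\de s}
=\int_T \overline{f(s)}\;\overline{\alpha_s(g(s^{-1}t))}\,\de s\\
&=\int_T (\tau f)(s)\,\alpha_s\big((\tau g)(s^{-1}t)\big)\,\de s
=\big((\tau f)*(\tau g)\big)(t),
\end{align*}
where pulling the conjugation inside the integral is justified because $\tau_\Ac$ is a bounded $\RR$-linear map on $\Ac$ and the integrand is Bochner integrable. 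The identity $\tau(f^*)=(\tau f)^*$ is checked the same way, using $\overline{a^*}=\overline{a}^*$ and $\alpha_t$ real, and $\tau\circ\tau=\id$ is immediate from $\tau_\Ac\circ\tau_\Ac=\id_\Ac$.

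The step I expect to be the only genuine obstacle is showing $\tau$ is isometric for the universal $C^*$-norm on $\Ac\rtimes_\alpha T$, so that it extends to the completion. The clean way is to observe that covariant representations come in conjugate pairs: if $(\pi,U)$ is a covariant representation of $(\Ac,T,\alpha)$ on a Hilbert space $\Hc$, then on the complex-conjugate Hilbert space $\overline{\Hc}$ one gets a covariant representation $(\overline{\pi},\overline{U})$ defined by $\overline{\pi}(a):=\overline{\pi(\overline{a})}$ and $\overline{U}_t:=\overline{U_t}$ (conjugation of operators via the canonical antiunitary $\Hc\to\overline{\Hc}$), and one has $\overline{\pi}\rtimes\overline{U} = \overline{(\pi\rtimes U)}\circ\tau$ on $\Cc_c(T,\Ac)$. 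Since $(\pi,U)\mapsto(\overline{\pi},\overline{U})$ is a bijection on unitary equivalence classes of covariant representations, taking the supremum of $\|(\pi\rtimes U)(f)\|$ over all covariant representations gives $\|\tau f\|=\|f\|$. Hence $\tau$ extends (uniquely) to an antilinear, multiplicative, involution-preserving involutive map on $\Ac\rtimes_\alpha T$, i.e.\ a ``real'' structure, and uniqueness of the extension follows from density of $\Cc_c(T,\Ac)$ together with continuity.
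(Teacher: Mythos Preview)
Your proposal is correct and follows essentially the same route as the paper: check that $f\mapsto\overline{f}$ is an antilinear $*$-algebra involution on $\Cc_c(T,\Ac)$, then prove it is isometric for the universal norm by pairing each covariant representation $(\pi,U)$ with a conjugate covariant representation $(\overline{\pi},\overline{U})$ so that $\|(\pi\rtimes U)(\overline{f})\|=\|(\overline{\pi}\rtimes\overline{U})(f)\|$, and take the supremum. The only cosmetic difference is that the paper realizes the conjugate representation on the \emph{same} Hilbert space via a fixed antilinear involutive isometry $C\colon\Hc\to\Hc$ (setting $\overline{\pi}(a)=C\pi(\overline{a})C$, $\overline{U}_t=CU_tC$), whereas you work on the conjugate Hilbert space $\overline{\Hc}$; these are equivalent formulations.
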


\begin{proof}
Uniqueness follows from the fact that $\Cc_c(T,\Ac)$ is dense in $\Ac\rtimes_\alpha T$. 

To prove the existence, we first note that 
the antilinear mapping $f\mapsto \overline{f}$ defined as in the statement on $\Cc_c(T,\Ac)$ preserves the multiplication and the involution. 
In fact, we recall that 
$$(f\ast g)(t)=\int_T f(r)\alpha_r(g(r^{-1}t))\de r\text{ and }
f^*(t):=\Delta(t^{-1})\alpha_t(f(t^{-1})^*)$$
hence $\overline{f\ast g}=\overline{f}\ast \overline{g}$ and $\overline{f^*}=\overline{f}^*$ for all $f,g\in\Cc_c(T,\Ac)$. 

It remains to show that $f\mapsto \overline{f}$ is isometric with respect to the $C^*$-norm on $\Cc_c(T,\Ac)$. 
To this end let  $(\pi,U)$ be a covariant representation of $(\Ac,T,\alpha)$ on a complex Hilbert space~$\Hc$, that is, $\pi(\alpha_t(a))=U_t \pi(a)U_t^*\in\Bc(\Hc)$ for all $t\in T$ and $a\in \Ac$. 
For any fixed antilinear involutive isometry $C\colon \Hc\to\Hc$ 
we define $\overline{\pi}\colon \Ac\to\Bc(\Hc)$, $\overline{\pi}(a):=C\pi(\overline{a})C$, and $\overline{U}\colon T\to\Bc(\Hc)$, $\overline{U}_t:=CU_tC$. 
Then it is straightforward to check that $(\overline{\pi},\overline{U})$ is again a  
covariant representation of $(\Ac,T,\alpha)$ on the complex Hilbert space~$\Hc$, and moreover for every $f\in\Cc_c(T,\Ac)$ we have 
\begin{align*}
(\pi\rtimes U)(\overline{f})
&=\int_T\pi(\overline{f}(t))U_t\de t
=\int_T\pi(\overline{f(t)})U_t\de t
=\int_TC\overline{\pi}(f(t))CU_t\de t \\
&=C\Bigl(\int_T\overline{\pi}(f(t))\overline{U}_t\de t \Bigr)C
=C\Bigl((\overline{\pi}\rtimes \overline{U})(f)\Bigr)C.
\end{align*}
This shows that for every covariant representation $(\pi,U)$ there exists a covariant representation  $(\overline{\pi},\overline{U})$  with 
$\Vert (\pi\rtimes U)(\overline{f})\Vert=\Vert (\overline{\pi}\rtimes \overline{U})(f)\Vert$. 
Then, by the definition of the $C^*$-norm on $\Cc_c(T,\Ac)$, we obtain $\Vert f\Vert=\Vert \overline{f}\Vert$ in $\Ac\rtimes_\alpha T$.
This finishes the proof. 
\end{proof}

\begin{remark}\label{cross-morphism}
\normalfont
Let $(\Ac,T,\alpha)$ and  $(\Bc,T,\beta)$  be  ``real'' $C^*$-dynamical systems and $\psi\colon \Ac\to\Bc$ is an equivariant ``real'' morphism, then the corresponding $*$-morphism  
 $\psi\rtimes\iota \colon \Ac\rtimes_\alpha T\to \Bc\rtimes_\beta T$ 
 satisfying $((\psi\rtimes \iota)(f))(t)=\psi(f(t))$ for all $t\in T$ and $f\in\Cc_c(T,\Ac)$ is  a ``real'' morphism. 
\end{remark}

We now study the interaction between ``real'' structures and some constructions from \cite{Ri82}. 

\begin{definition}\label{WHdef}
\normalfont 
Let $(\Ac,\RR,\alpha)$ be a ``real'' $C^*$-dynamical system. 
We consider the $C^*$-algebra  $\cone \Ac:=\Cc_0(\RR\cup\{+\infty\},\Ac)$ 
with its $*$-morphism  $\ev_{+\infty}\colon \cone \Ac\to \Ac$, $f\mapsto f(+\infty)$ and the ideal $\susp \Ac:=\Ker(\ev_{+\infty})\simeq \Cc_0(\RR,\Ac)$. 
Then  $\cone \Ac$ is a ``real'' $C^*$-algebra with its ``real'' structure defined by $\overline{f}(t):=\overline{f(t)}$ for all $t\in\RR\cup\{+\infty\}$ and $f\in\cone \Ac$.
 Moreover $\susp \Ac$ is a ``real'' ideal, $\ev_{+\infty}$ is a ``real'' morphism, and we have the short exact sequence 
$$0\to\susp \Ac\hookrightarrow \cone \Ac\mathop{\longrightarrow}\limits^{\ev_{+\infty}} \Ac\to 0.$$
If we define 
$$\tau\otimes \alpha\colon \RR\to\Aut(\cone \Ac),\quad  ((\tau\otimes\alpha)_r f)(t):=\alpha_r(f(t-r))$$
then $(\cone A,\tau\otimes\alpha,\RR)$ is a ``real'' $C^*$-dynamical system and $\ev_{+\infty}$ intertwines the actions of $\RR$ on $\cone \Ac$ and $\Ac$ via $\tau\otimes\alpha$ and $\alpha$, respectuvely. 
In particular the ``real'' ideal $\susp \Ac$ is invariant to $\tau\otimes\alpha$. 
We further obtain the short exact sequence 
\begin{equation}\label{WHdef_eq1}
0\to\susp \Ac \rtimes_{\tau\otimes\alpha}\RR\hookrightarrow 
\cone \Ac\rtimes_{\tau\otimes\alpha}\RR\mathop{\longrightarrow}\limits^{\psi} \Ac\rtimes_\alpha\RR\to 0
\end{equation}
called the \emph{Wiener-Hopf extension} for $\Ac\rtimes_\alpha\RR$, 
where $\psi:=\ev_{+\infty}\rtimes\iota$ is a ``real'' morphism. 
(See Remark~\ref{cross-morphism}.)
\end{definition}

\begin{remark}\label{SvN}
\normalfont
In the special case $\Ac=\CC$ we get the ``real'' $C^*$-dynamical system $(\susp,\tau,\RR)$ with $\susp:=\susp\CC=\Cc_0(\RR)$ and $\tau\colon\RR\to\Aut(\susp)$, 
$(\tau_rf)(t):=f(t-r)$. 
If the regular representation of the group $\RR$ is again denoted by $\tau\colon \RR\to L^2(\RR)$, $(\tau_r\xi)(t):=\xi(t-r)$, 
and we define $M\colon\susp\to\Bc(L^2(\RR))$, $M(f)\xi=f\xi$ for all $f\in \susp$ and $\xi\in L^2(\RR)$, 
then we obtain a covariant representation $(M,\tau)$ of the $C^*$-dynamical system $(\susp,\tau,\RR)$ whose integrated representation gives a $*$-isomorphism 
\begin{equation}\label{SvN_eq1}
M\rtimes\tau\colon \susp\rtimes_\tau \RR\to\Kc(L^2(\RR)).
\end{equation} 
See \cite[Th. 4.24]{Wi07}. 
If $h\in\Cc_c(\RR)$ and $f\in\susp$, then the function  $h(\cdot)f$ (that is, $r\mapsto h(r)f$) belongs to $\Cc_c(\RR,\susp)\subseteq\susp\rtimes_\tau\RR$ 
and we have 
$$(M\rtimes\tau)(h(\cdot)f)=\int_{\RR}M(h(r)f)\tau(r)\de r=M(f)\int_{\RR}h(r)\tau(r)\de t$$
hence 
$$((M\rtimes\tau)(h(\cdot)f))\xi=f\cdot (h\ast\xi)\text{ for }\xi\in L^2(\RR)$$
The $*$-isomorphism~\eqref{SvN_eq1} is a ``real'' isomorphism. 
Here we regard $\Kc(L^2(\RR))$ as a ``real'' $C^*$-algebra 
with its ``real'' structure given by $\overline{T}:=CTC$ for all $T\in \Kc(L^2(\RR))$, where $C\colon L^2(\RR)\to L^2(\RR)$, $C(\xi):=\overline{\xi}$; 
thus, if $T\in \Kc(L^2(\RR))$ is an integral operator defined by an integral kernel $K_T\colon \RR\times\RR\to\CC$, 
then $\overline{T}$ is the integral operator defined by the integral kernel 
 $K_{\overline{T}}\colon \RR\times\RR\to\CC$, where $K_{\overline{T}}(t,r):=\overline{K_T(t,r)}$ for all $t,r\in\RR$. 
\end{remark}

\begin{proposition}\label{tsigns}
For every ``real'' $C^*$-dynamical system $(\Ac,\RR, \alpha)$ there exist  group isomorphisms 
\begin{align*}
\Theta_0 & \colon K_0(\Ac\rtimes_\alpha\RR)\to K_1(\Ac), \\
\Theta_1 & \colon K_1(\Ac\rtimes_\alpha\RR)\to K_0(\Ac), 
\end{align*}
satisfying 
\begin{align*}
\Theta_0(\overline{x})& =-\overline{\Theta_0(x)}
\text{ for all }x\in K_0(\Ac\rtimes_\alpha \RR), \\
\Theta_1(\overline{x})&=\quad \overline{\Theta_1(x)}
\text{ for all }x\in K_1(\Ac\rtimes_\alpha \RR).
\end{align*}
\end{proposition}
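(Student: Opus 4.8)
The plan is to construct the isomorphisms $\Theta_0,\Theta_1$ by following Rieffel's realization of the Connes--Thom isomorphism via the Wiener--Hopf extension of Definition~\ref{WHdef}, and then to track the action of the ``real'' structures through each map in that construction using the tools already assembled (Lemmas~\ref{deltas}, \ref{dim1}, \ref{rcrossed}, Remark~\ref{SvN}, and the naturality of $x\mapsto\overline x$ under ``real'' morphisms). First I would apply the crossed-product functor to the Wiener--Hopf exact sequence \eqref{WHdef_eq1} and write down the associated six-term sequence~\eqref{hexagon} for the triple $(\susp\Ac\rtimes_{\tau\otimes\alpha}\RR,\ \cone\Ac\rtimes_{\tau\otimes\alpha}\RR,\ \Ac\rtimes_\alpha\RR)$. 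Since $\cone\Ac=\Cc_0(\RR\cup\{+\infty\},\Ac)$ is the cone on $\Ac$, the dynamical system $(\cone\Ac,\tau\otimes\alpha,\RR)$ is equivariantly contractible, so $\cone\Ac\rtimes_{\tau\otimes\alpha}\RR$ has vanishing $K$-theory; hence the connecting maps $\delta_0,\delta_1$ of this six-term sequence are isomorphisms. Next I would identify the ideal term: $\susp\Ac=\Cc_0(\RR,\Ac)$ and the action $\tau\otimes\alpha$ restricted to it is, up to the ``real'' isomorphism induced by $\alpha$ (a standard unwinding, as in Rieffel), conjugate to $\tau\otimes\mathrm{id}$ on $\Cc_0(\RR)\otimes\Ac=\susp\otimes\Ac$; combining this with Remark~\ref{SvN} and the stability of $K$-theory gives a ``real'' isomorphism $\susp\Ac\rtimes_{\tau\otimes\alpha}\RR\cong\Kc(L^2(\RR))\otimes\Ac$, hence canonical ``real''-compatible identifications $K_j(\susp\Ac\rtimes_{\tau\otimes\alpha}\RR)\cong K_j(\Ac)$. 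Composing, one sets $\Theta_0:=(\text{this identification})\circ\delta_0\colon K_0(\Ac\rtimes_\alpha\RR)\to K_1(\susp\Ac\rtimes\RR)\cong K_1(\Ac)$ — wait, one must be careful with the degree shift: $\delta_0$ lands in $K_1$ of the ideal and $\delta_1$ in $K_0$ of the ideal, and the identification above preserves degree, so indeed $\Theta_0$ goes to $K_1(\Ac)$ and $\Theta_1$ to $K_0(\Ac)$, as claimed.

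Once the maps are in place, the sign assertions are obtained by bookkeeping. The connecting maps satisfy $\delta_0(\overline x)=-\overline{\delta_0(x)}$ and $\delta_1(\overline y)=\overline{\delta_1(y)}$ by Lemma~\ref{deltas}, applied to the ``real'' morphism $\psi$ of \eqref{WHdef_eq1}; these are exactly the signs that appear in the conclusion. It then remains to check that the ``real''-structure identification $K_j(\susp\Ac\rtimes_{\tau\otimes\alpha}\RR)\cong K_j(\Ac)$ is \emph{compatible} with $x\mapsto\overline x$, i.e.\ introduces no further sign. This is where Remark~\ref{SvN} does the work for the $\susp=\Cc_0(\RR)$ factor: the $*$-isomorphism $M\rtimes\tau\colon\susp\rtimes_\tau\RR\to\Kc(L^2(\RR))$ is stated there to be a ``real'' isomorphism for the prescribed ``real'' structure $\overline T=CTC$ on $\Kc(L^2(\RR))$, and Lemma~\ref{dim1} records that the ``real'' structure acts trivially on $K_1(\Cc(\TT))\cong K_1(C^*(\RR))\cong\ZZ$ — equivalently that the relevant generator is fixed. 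Tensoring with $\Ac$ and invoking naturality (the tensor-flip and stability isomorphisms are ``real'' morphisms, so they commute with $x\mapsto\overline x$) shows the identification carries $\overline x$ to $\overline x$ with no extra sign. Therefore $\Theta_0(\overline x)=-\overline{\Theta_0(x)}$ and $\Theta_1(\overline x)=\overline{\Theta_1(x)}$.

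The main obstacle I anticipate is the second step: making the equivariant identification $\susp\Ac\rtimes_{\tau\otimes\alpha}\RR\cong\Kc(L^2(\RR))\otimes\Ac$ genuinely ``real''-equivariant. The underlying $*$-isomorphism is classical (Rieffel), but one must verify that the intertwining unitary — roughly, the change of variables $f(t)\mapsto\alpha_t(f(t))$ that untwists $\tau\otimes\alpha$ into $\tau\otimes\mathrm{id}$ — is implemented by a ``real'' covariant representation, which forces checking that the conjugating antilinear isometry $C$ on the representation space commutes appropriately with the group action; the hypothesis $\alpha_t(\overline a)=\overline{\alpha_t(a)}$ in Definition~\ref{rdym} is exactly what makes this go through, but the verification is the delicate point. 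Everything downstream is then formal: the six-term sequence, the vanishing of $K_*(\cone\Ac\rtimes\RR)$, and the sign computation via Lemma~\ref{deltas}. I would also remark that $\Theta_0,\Theta_1$ so defined agree (up to the stated signs) with the usual Connes--Thom isomorphism, so no independent proof of bijectivity beyond exactness of \eqref{hexagon} is needed.
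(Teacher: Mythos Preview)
Your proposal is correct and follows essentially the same route as the paper: Wiener--Hopf extension~\eqref{WHdef_eq1}, vanishing of $K_*(\cone\Ac\rtimes_{\tau\otimes\alpha}\RR)$ so that $\delta_0,\delta_1$ are isomorphisms, untwisting $\susp\Ac\rtimes_{\tau\otimes\alpha}\RR\cong\Kc(L^2(\RR))\otimes\Ac$ as a ``real'' isomorphism, stability, and then Lemma~\ref{deltas} for the signs. Two small points of divergence are worth noting. First, your invocation of Lemma~\ref{dim1} is misplaced: that lemma concerns $K_1(C^*(\RR))$ and is not used in this proof at all (it enters only later, in Corollary~\ref{signs_solvable}); the paper instead handles the stability step by choosing a rank-one projection $p_0\in\Kc(L^2(\RR))$ with $\overline{p_0}=p_0$, so that $\mu_{p_0}\colon\Ac\to\Kc(L^2(\RR))\otimes\Ac$, $a\mapsto p_0\otimes a$, is a ``real'' morphism and $K_j(\mu_{p_0})$ automatically commutes with $x\mapsto\overline x$. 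Second, the ``main obstacle'' you correctly anticipate---the ``real'' equivariance of the untwisting---is dispatched in the paper by writing down the explicit formula $\bigl((\gamma(f))(r)\bigr)(t)=\alpha_{-t}\bigl((f(r))(t)\bigr)$ for $\gamma\colon\susp\Ac\rtimes_{\tau\otimes\alpha}\RR\to\susp\Ac\rtimes_{\tau\otimes\iota}\RR$, from which the ``real'' property is immediate given $\alpha_t(\overline a)=\overline{\alpha_t(a)}$.
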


\begin{proof}
As proved in \cite{Ri82}, we have $K_0(\cone \Ac\rtimes_{\tau\otimes\alpha}\RR)=\{0\}$ and $K_1(\cone \Ac\rtimes_{\tau\otimes\alpha}\RR)=\{0\}$. 
Therefore, in the six-term exact sequence \eqref{hexagon} corresponding to the Wiener-Hopf extension~\eqref{WHdef_eq1}, 
the vertical arrows 
\begin{align*}
\delta_0 & \colon K_0(\Ac\rtimes_\alpha\RR)\to 
K_1(\susp \Ac \rtimes_{\tau\otimes\alpha}\RR), \\
\delta_1 & \colon K_1(\Ac\rtimes_\alpha\RR)\to 
K_0(\susp \Ac \rtimes_{\tau\otimes\alpha}\RR),
\end{align*}
are group isomorphisms. 

On the other hand, 
$(\susp \Ac,\tau\otimes\iota,\RR)$
is a ``real'' $C^*$-dynamical system and we have the $*$-isomorphism 
\begin{equation}
\label{tsigns_proof_eq1}
\gamma\colon \susp \Ac \rtimes_{\tau\otimes\alpha}\RR\to 
\susp \Ac \rtimes_{\tau\otimes\iota}\RR
\end{equation}
where 
$\gamma(f)\in\Cc_c(\RR,\susp \Ac)\subseteq \susp \Ac \rtimes_{\tau \otimes \iota}\RR$ is given by 
$\bigl((\gamma(f))(r)\bigr)(t)=\alpha_{-t}((f(r))(t))$,  $r,t\in\RR$,  
for every  $f\in\Cc_c(\RR,\susp \Ac)\subseteq \susp A \rtimes_{\tau\otimes\alpha}\RR$, 
(See \cite[page 147]{Ri82}.)
Then $\gamma$ is a ``real'' isomorphism. 

Moreover, by \cite[Lemma 2.75]{Wi07},
we have a $*$-isomorphism 
$$\eta\colon \susp \Ac \rtimes_{\tau \otimes\iota}\RR \to (\susp \Ac\rtimes_{\tau}\RR)\otimes \Ac$$
satisfying $\eta(h(\cdot)(f\otimes a))=(h(\cdot)f)\otimes a$ for all $h\in\Cc_c(\RR)$, $f\in\Cc_c(\RR)\subseteq\susp$, and $a\in \Ac$, 
where we regard $h(\cdot)f$ as an element of $\Cc_c(\RR,\susp)$ as in Remark~\ref{SvN}. 
Taking into account the $*$-isomorphism $M\rtimes\tau$ from \eqref{SvN_eq1}, 
we further obtain the $*$-isomorphism
\begin{equation}
\label{tsigns_proof_eq2}
\kappa:=((M\rtimes\tau)\otimes\id_\Ac)\circ \eta \colon \susp \Ac \rtimes_{\tau\otimes\iota}\RR 
\to \Kc(L^2(\RR))\otimes \Ac
\end{equation}
satisfying 
$\kappa(h(\cdot)(f\otimes a))=\bigl((M\rtimes\tau)(h(\cdot)f)\bigr)\otimes a$ 
for all $h\in\Cc_c(\RR)$, $f\in\Cc_c(\RR)\subseteq\susp$, and $a\in A$. 
In particular, this shows that $\Kc(L^2(\RR))\otimes \Ac$ has the structure of a ``real'' $C^*$-algebra satisfying $\overline{T\otimes a}=\overline{T}\otimes\overline{a}$ for all $T\in\Kc(L^2(\RR))$ 
(cf. the end of Remark~\ref{SvN}) and $a\in \Ac$.
Then the above $*$-isomorphism $\kappa$ is a ``real'' isomorphism. 
 
Using \eqref{tsigns_proof_eq1} and \eqref{tsigns_proof_eq2}, 
we now obtain the ``real'' isomorphism 
\begin{equation*}
\kappa\circ\gamma\colon \susp \Ac \rtimes_{\tau\otimes\alpha}\RR\to 
\Kc(L^2(\RR))\otimes \Ac.
\end{equation*}
This in turn gives the group isomorphisms
$$K_j(\kappa\circ\gamma)\colon K_j(\susp \Ac \rtimes_{\tau\otimes\alpha}\RR)
\to 
K_j(\Kc(L^2(\RR))\otimes \Ac)
$$
satisfying $K_j(\kappa\circ\gamma)(\overline{x})=\overline{K_j(\kappa\circ\gamma)(x)}$ 
for all $x\in K_j(\susp \Ac \rtimes_{\tau\otimes\alpha}\RR)$ and $j=0,1$. 

Finally, we select any $\xi_0\in L^2(\RR)$ with $\overline{\xi_0}=\xi_0$ and $\Vert \xi_0\Vert=1$ and we consider its corresponding rank-one projection $p_0:=(\cdot\mid\xi_0)\xi_0\in\Kc(L^2(\RR))$, 
so that $\overline{p_0}=p_0$ in the ``real'' $C^*$-algebra $\Kc(L^2(\RR))$. 
Then the mapping 
$$\mu_{p_0}\colon \Ac\to \Kc(L^2(\RR))\otimes \Ac,\quad a\mapsto p_0\otimes a$$
is a ``real'' morphism 
hence the group morphism 
$$K_j(\mu_{p_0})\colon K_j(\Ac)\to K_j(\Kc(L^2(\RR))\otimes \Ac)$$ 
satisfies $K_j(\mu_{p_0})(\overline{y})=\overline{K_j(\mu_{p_0})(y)}$ for all $y\in K_j(\Ac)$ and $j=0,1$. 
On the other hand, $K_j(\mu_{p_0})$ is actually a group isomorphism for $j=0,1$. 
(See \cite[6.4.1 and 8.2.8]{RLL00}.)
Consequently we obtain the group isomorphisms 
$$\Theta_0:=K_1(\mu_{p_0})^{-1}\circ K_1(\kappa\circ\gamma) \circ\delta_0
\colon K_0(\Ac\rtimes_\alpha\RR)\to K_1(\Ac)$$
and 
$$\Theta_1:=K_0(\mu_{p_0})^{-1}\circ K_0(\kappa\circ\gamma) \circ\delta_1
\colon K_1(\Ac\rtimes_\alpha\RR)\to K_0(\Ac).$$
Lemma~\ref{deltas}  ensures that $\Theta_1$ and $\Theta_2$ have the required properties. 
\end{proof}

\begin{corollary}\label{signs_semid}
Let $N$ be a locally compact group and $\alpha\colon \RR\to\Aut(N)$ be a 
continuous action of $\RR$ by automorphisms of $N$. 
Then  there exist group isomorphisms 
\begin{align*}
\Theta_0 & \colon K_0(C^*(N\rtimes_\alpha\RR))\to K_1(C^*(N)), \\
\Theta_1 & \colon K_1(C^*(N\rtimes_\alpha\RR))\to K_0(C^*(N))
\end{align*} 
satisfying 
\begin{align*}
\Theta_0(\overline{x})
&=-\overline{\Theta_0(x)}\text{ for all }x\in K_0(C^*(N\rtimes_\alpha\RR)),\\
\Theta_1(\overline{x})
&=\quad \overline{\Theta_1(x)}\text{ for all }x\in K_1(C^*(N\rtimes_\alpha\RR)). 
\end{align*}
\end{corollary}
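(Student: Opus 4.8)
The plan is to reduce this to Proposition~\ref{tsigns} by realizing $C^*(N\rtimes_\alpha\RR)$ as the crossed product of a ``real'' $C^*$-dynamical system. First I would recall that $\alpha$ induces a continuous action $\widehat\alpha\colon\RR\to\Aut(C^*(N))$, given on the dense subalgebra $\Cc_c(N)\subseteq C^*(N)$ by $(\widehat\alpha_r g)(n)=\delta_\alpha(r)\,g(\alpha_{-r}(n))$, where $\delta_\alpha(r)>0$ is the positive real scalar by which $\alpha_r$ scales a Haar measure of~$N$; and that there is a canonical $*$-isomorphism $\Phi\colon C^*(N\rtimes_\alpha\RR)\to C^*(N)\rtimes_{\widehat\alpha}\RR$ which, under the set-theoretic identification $N\rtimes_\alpha\RR\simeq N\times\RR$, carries $f\in\Cc_c(N\rtimes_\alpha\RR)$ to the map $r\mapsto f(\cdot,r)\in\Cc_c(N)$, viewed as an element of $\Cc_c(\RR,C^*(N))\subseteq C^*(N)\rtimes_{\widehat\alpha}\RR$. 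This identification is classical; see for instance \cite{Wi07}.

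Next I would verify the compatibility with the ``real'' structures. Since each $\alpha_r$ is a group automorphism of~$N$ and $\delta_\alpha(r)$ is a positive real number, $\widehat\alpha_r$ commutes with the pointwise complex conjugation of $\Cc_c$-functions on~$N$, hence extends to a ``real'' automorphism of the ``real'' $C^*$-algebra $C^*(N)$; therefore $(C^*(N),\RR,\widehat\alpha)$ is a ``real'' $C^*$-dynamical system and, by Lemma~\ref{rcrossed}, $C^*(N)\rtimes_{\widehat\alpha}\RR$ carries a canonical ``real'' structure. Both canonical ``real'' structures, on $C^*(N\rtimes_\alpha\RR)$ and on $C^*(N)\rtimes_{\widehat\alpha}\RR$, are pointwise complex conjugation of $\Cc_c$-functions, and the identification $N\rtimes_\alpha\RR\simeq N\times\RR$ visibly commutes with complex conjugation, so $\Phi(\overline f)=\overline{\Phi(f)}$ for all $f\in\Cc_c(N\rtimes_\alpha\RR)$; by density, $\Phi$ is a ``real'' isomorphism.

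Finally I would apply Proposition~\ref{tsigns} to the ``real'' $C^*$-dynamical system $(C^*(N),\RR,\widehat\alpha)$, obtaining group isomorphisms $\Theta_0'\colon K_0(C^*(N)\rtimes_{\widehat\alpha}\RR)\to K_1(C^*(N))$ and $\Theta_1'\colon K_1(C^*(N)\rtimes_{\widehat\alpha}\RR)\to K_0(C^*(N))$ satisfying $\Theta_0'(\overline x)=-\overline{\Theta_0'(x)}$ and $\Theta_1'(\overline x)=\overline{\Theta_1'(x)}$. Since $\Phi$ is a ``real'' isomorphism, the induced isomorphisms $K_j(\Phi)\colon K_j(C^*(N\rtimes_\alpha\RR))\to K_j(C^*(N)\rtimes_{\widehat\alpha}\RR)$ satisfy $K_j(\Phi)(\overline x)=\overline{K_j(\Phi)(x)}$ for $j=0,1$, by the functoriality of $K_j$ under ``real'' morphisms recalled before Lemma~\ref{deltas}. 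Then $\Theta_j:=\Theta_j'\circ K_j(\Phi)$, $j=0,1$, are the asserted isomorphisms. The one point demanding care is the first step: checking that the classical isomorphism $C^*(N\rtimes_\alpha\RR)\simeq C^*(N)\rtimes_{\widehat\alpha}\RR$, together with the modular factor $\delta_\alpha$, is arranged so as to intertwine the two canonical ``real'' structures. Everything else is formal.
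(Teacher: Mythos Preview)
Your proof is correct and follows essentially the same approach as the paper: identify $C^*(N\rtimes_\alpha\RR)$ with a crossed product $C^*(N)\rtimes_{\widehat\alpha}\RR$ via the classical isomorphism from \cite{Wi07}, check that this isomorphism is ``real'' (because pointwise complex conjugation is preserved under the identification $\Cc_c(N\times\RR)\leftrightarrow\Cc_c(\RR,\Cc_c(N))$), and then invoke Proposition~\ref{tsigns}. Your treatment is in fact slightly more explicit than the paper's, in that you spell out the modular factor $\delta_\alpha(r)$ and note that its positivity is what makes $\widehat\alpha_r$ commute with complex conjugation.
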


\begin{proof}
There exists a group morphism $\beta\colon \RR\to \Aut(C^*(N))$ for which 
$(C^*(N),\RR,\beta)$ is a ``real'' $C^*$-dynamical system,  and  
the natural inclusion map 
$$\Cc_c(\RR,\Cc_c(N))\hookrightarrow\Cc_c(N\times \RR)$$
extends to a $*$-isomorphism $\gamma\colon C^*(N)\rtimes_\beta\RR\to C^*(N\rtimes_\alpha\RR)$, 
by \cite[Prop. 3.11]{Wi07}.
The above inclusion map intertwines the operation of taking the complex-conjugates of the functions on $\RR$, $N$, and $N\times\RR$, 
hence $\gamma$ is a ``real'' isomorphism. 
Then $K_j(\gamma)\colon K_j(C^*(N)\rtimes_\beta\RR)\to K_j(C^*(N\rtimes_\alpha\RR))$ is a group isomorphism satisfying 
$K_j(\gamma)(\overline{x})=\overline{K_j(\gamma)(x)}$ for all $x\in K_j(C^*(N)\rtimes_\beta\RR)$ and $j=0,1$. 
Now the assertion follows by an application of Proposition~\ref{tsigns}. 
\end{proof}

\begin{corollary}
\label{signs_solvable}
Let $G$ be a solvable Lie group and denote $n:=\dim G$.  
Then the following assertion hold: 
\begin{enumerate}[{\rm(i)}]
	\item\label{signs_solvable_item1} If $n\in 2\ZZ$ then $K_1(C^*(G))= \{0\}$, $K_0(C^*(G))\simeq\ZZ$, and for every $x\in K_0(C^*(G))$ we have 
	$$\overline{x}=\begin{cases}
	x &\text{ if }n\in 4\ZZ,\\
	-x &\text{ if }n\in 4\ZZ+2.
	\end{cases}$$
	\item\label{signs_solvable_item2}  If $n\in 2\ZZ+1$ then $K_0(C^*(G))=\{0\}$, $K_1(C^*(G))\simeq\ZZ$, and for every $x\in K_1(C^*(G))$ we have 
	$$\overline{x}=\begin{cases}
	x &\text{ if }n\in 4\ZZ+1,\\
	-x &\text{ if } n\in 4\ZZ+3.
	\end{cases}$$
\end{enumerate}
\end{corollary}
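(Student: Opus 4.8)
The plan is to induct on $n=\dim G$, using the fact that every solvable Lie group $G$ admits a closed normal subgroup $N$ with $G\simeq N\rtimes_\alpha\RR$, where $N$ is again a solvable Lie group with $\dim N=n-1$; for instance one may take $N$ to contain the derived group, so that $G/N\simeq\RR$. The base case is $n=0$, where $G=\{e\}$, $C^*(G)=\CC$, $K_0(\CC)=\ZZ$ with trivial ``real'' action (complex conjugation fixes $[\1]_0$), and $K_1(\CC)=\{0\}$. For $n=1$ we have $G=\RR$, and by Lemma~\ref{dim1} together with the Fourier/Cayley identification $C^*(\RR)\simeq\Cc_0(\ie\RR)$ used in its proof, $K_1(C^*(\RR))\simeq\ZZ$ with trivial ``real'' action, while $K_0(C^*(\RR))=K_0(\Cc_0(\RR))=\{0\}$ (here $4\ZZ+1\ni 1$, matching the claimed sign $+$).

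For the inductive step, suppose the result holds for all solvable Lie groups of dimension $n-1$, and write $G\simeq N\rtimes_\alpha\RR$ with $\dim N=n-1$. Applying Corollary~\ref{signs_semid} gives group isomorphisms $\Theta_0\colon K_0(C^*(G))\to K_1(C^*(N))$ and $\Theta_1\colon K_1(C^*(G))\to K_0(C^*(N))$ with $\Theta_0(\overline x)=-\overline{\Theta_0(x)}$ and $\Theta_1(\overline x)=\overline{\Theta_1(x)}$. Now split into the parity of $n$. If $n\in 2\ZZ$, then $n-1\in 2\ZZ+1$, so by the induction hypothesis $K_0(C^*(N))=\{0\}$ and $K_1(C^*(N))\simeq\ZZ$; hence $K_1(C^*(G))\simeq\Theta_1^{-1}(\{0\})=\{0\}$ and $K_0(C^*(G))\simeq K_1(C^*(N))\simeq\ZZ$, as required. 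Symmetrically, if $n\in 2\ZZ+1$ then $n-1\in 2\ZZ$, so $K_1(C^*(N))=\{0\}$ and $K_0(C^*(N))\simeq\ZZ$, giving $K_0(C^*(G))=\{0\}$ and $K_1(C^*(G))\simeq\ZZ$. This establishes the group-theoretic parts of both \eqref{signs_solvable_item1} and \eqref{signs_solvable_item2}.

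It remains to track the signs, and this is where the bookkeeping with the four residue classes mod $4$ comes in. Consider first $n\in 2\ZZ$ and $x\in K_0(C^*(G))$. Then $\Theta_0(x)\in K_1(C^*(N))$ with $n-1\in 2\ZZ+1$. If $n\in 4\ZZ$, then $n-1\in 4\ZZ+3$, so by induction $\overline{\Theta_0(x)}=-\Theta_0(x)$; combined with $\Theta_0(\overline x)=-\overline{\Theta_0(x)}=\Theta_0(x)$ and the injectivity of $\Theta_0$ we get $\overline x=x$. If $n\in 4\ZZ+2$, then $n-1\in 4\ZZ+1$, so $\overline{\Theta_0(x)}=\Theta_0(x)$, whence $\Theta_0(\overline x)=-\Theta_0(x)$ and $\overline x=-x$. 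The odd case is entirely parallel using $\Theta_1$ and the identity $\Theta_1(\overline x)=\overline{\Theta_1(x)}$ (no sign from $\Theta_1$): for $n\in 4\ZZ+1$ we have $n-1\in 4\ZZ$ so $\overline{\Theta_1(x)}=\Theta_1(x)$ and $\overline x=x$; for $n\in 4\ZZ+3$ we have $n-1\in 4\ZZ+2$ so $\overline{\Theta_1(x)}=-\Theta_1(x)$ and $\overline x=-x$.

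The only genuine subtlety, rather than an obstacle, is making sure the recursion is well-founded and that at every stage the subgroup $N$ one peels off is itself a \emph{connected simply connected} solvable Lie group, so that the induction hypothesis applies verbatim and Corollary~\ref{signs_semid} is applicable (it requires an $\RR$-action, which is guaranteed since $G/N\simeq\RR$ lifts because $\RR$ is simply connected and the extension of simply connected solvable Lie groups splits). I would note explicitly that $\dim N=n-1\ge 0$ and that one terminates at the trivial group, and also double-check the two base cases $n=0,1$ against the stated sign patterns — both check out as above — so that the induction genuinely starts. Everything else is a direct unwinding of the sign identities in Corollary~\ref{signs_semid} together with the inductive parity/residue analysis.
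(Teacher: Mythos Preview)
Your proof is correct and follows essentially the same approach as the paper's own proof: induction on $\dim G$ via the decomposition $G\simeq G_1\rtimes\RR$, invoking Corollary~\ref{signs_semid} for the inductive step and Lemma~\ref{dim1} for the base. The paper's version is terser (it cites \cite{Co81} for the $K$-group computations and leaves the sign bookkeeping to the reader), but your explicit tracking of the four residue classes mod~$4$ is exactly what that induction unwinds to.
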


\begin{proof}
The group isomorphisms from the statement are well known.  (See \cite[Sect.~V, Cor.~7]{Co81}.)
To prove the assertions on $\overline{x}$  
we recall that, since $G$ is a 
solvable Lie group, 
there exists a Lie group isomorphism $G\simeq G_1\rtimes\RR$ for a suitable 
solvable Lie group $G_1$.
Now the conclusion follows by induction, using Corollary~\ref{signs_semid} and Lemma~\ref{dim1}. 
\end{proof}

\subsection{Continuous fields of $C^*$-algebras }

The following lemma is implicitly used in the proof of \cite[Thm.~3.1]{ENN93}. 

\begin{lemma}\label{lemma-cf1}
Let $((\Ac_t)_{t\in S}, \Theta)$ be a continuous field of $C^*$-algebras over the locally compact space $S$.
Assume that for a $t_0\in S$ there is a projection $p_0\in \Pg(\Ac_{t_0})\setminus \{0\}$. 
Then there is an open neighbourhood $V_0$ of $t_0$ in $S$ and a section 
$\theta_0\in \Theta\vert_{V_0}$ such that $\theta_0(t_0) = p_0$ and  $ \theta_0(t) \in  \Pg(\Ac_{t})\setminus \{0\}$
for every $t\in V_0$. 
\end{lemma}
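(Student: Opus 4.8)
The strategy is to exploit the continuity of the field to promote the single projection $p_0$ at $t_0$ to a nearby section that is close to a projection, and then to correct it by functional calculus. First I would recall that, by the definition of a continuous field of $C^*$-algebras, the set $\Theta$ of continuous sections is such that for every $t_0\in S$ the evaluation map $\Theta\to\Ac_{t_0}$ is surjective; hence there exists a section $\sigma\in\Theta$ with $\sigma(t_0)=p_0$. Replacing $\sigma$ by $(\sigma+\sigma^*)/2$ — which is again in $\Theta$ and still evaluates to $p_0$ since $p_0=p_0^*$ — I may assume $\sigma(t)=\sigma(t)^*$ for all $t\in S$.

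Next I would use the fact that $t\mapsto\|\sigma(t)^2-\sigma(t)\|$ is a continuous function on $S$ (continuity of the norm along sections of a continuous field, together with the fact that $t\mapsto\sigma(t)^2$ is again a continuous section since $\Theta$ is closed under products). At $t_0$ this function vanishes because $p_0$ is a projection. Therefore there is an open neighbourhood $V_1$ of $t_0$ on which $\|\sigma(t)^2-\sigma(t)\|<1/4$; on such a neighbourhood the spectrum of the self-adjoint element $\sigma(t)$ is contained in a small neighbourhood of $\{0,1\}$, disjoint from $1/2$. Let $\chi$ be a continuous function on $\RR$ that equals $0$ near $0$, equals $1$ near $1$, and is such that $\chi\circ\chi=\chi$ on the relevant set; then $\theta_0(t):=\chi(\sigma(t))$ is a projection in $\Ac_t$ for each $t\in V_1$. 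The key point is that $t\mapsto\chi(\sigma(t))$ is again a continuous section: this follows from the standard fact that continuous functional calculus is compatible with continuous fields (one approximates $\chi$ uniformly by polynomials on a common compact set containing all the spectra, and $\Theta$ is closed under polynomial operations and uniform limits of sections on compact subsets), so $\theta_0\in\Theta\vert_{V_1}$. By construction $\theta_0(t_0)=\chi(p_0)=p_0$.

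It remains to arrange that $\theta_0(t)\neq 0$ for all $t$ in a possibly smaller neighbourhood. Since $\theta_0(t_0)=p_0\neq 0$, we have $\|\theta_0(t_0)\|=1$; by continuity of $t\mapsto\|\theta_0(t)\|$ along the section there is an open neighbourhood $V_0\subseteq V_1$ of $t_0$ on which $\|\theta_0(t)\|>0$, i.e.\ $\theta_0(t)\neq 0$. Restricting $\theta_0$ to $V_0$ gives a section with all the required properties: $\theta_0(t_0)=p_0$ and $\theta_0(t)\in\Pg(\Ac_t)\setminus\{0\}$ for every $t\in V_0$.

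The only mildly delicate point — and the one I would state carefully rather than prove in detail — is the compatibility of continuous functional calculus with continuous fields, namely that applying a fixed continuous function pointwise to a self-adjoint continuous section again yields a continuous section. This is routine: all the spectra $\spec(\sigma(t))$ for $t$ in a compact neighbourhood of $t_0$ lie in a fixed compact interval, one approximates $\chi$ uniformly on that interval by polynomials $P_k$, each $t\mapsto P_k(\sigma(t))$ is clearly in $\Theta$, and $P_k(\sigma(t))\to\chi(\sigma(t))$ uniformly for $t$ in that compact set, so the limit section is continuous by the closedness axiom for continuous fields. Everything else is a direct application of the continuity of the norm along sections, which is built into the definition of a continuous field.
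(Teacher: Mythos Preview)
Your proposal is correct and follows essentially the same approach as the paper's proof: lift $p_0$ to a section, symmetrize to make it self-adjoint, use continuity to control the spectrum near $t_0$, apply continuous functional calculus to produce a section of projections, and then shrink the neighbourhood to ensure non-vanishing. The paper outsources the spectral continuity and the compatibility of functional calculus with continuous fields to references in Dixmier (10.3.6 and 10.3.3), whereas you spell these out via polynomial approximation; and for non-vanishing the paper observes that $\|\theta_0(t)\|\in\{0,1\}$ along with continuity, while you simply shrink to where the norm is positive---but these are cosmetic differences. One small point: with the bound $\|\sigma(t)^2-\sigma(t)\|<1/4$ the spectral gap around $1/2$ may degenerate as $t$ ranges over $V_1$, so to use a single $\chi$ uniformly you should take a slightly smaller constant (e.g.\ $1/5$); this is harmless and you clearly have the right idea.
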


\begin{proof}
For $\delta \in (0, 1/2)$, define
$$ U=\{ z\in \CC\mid |z|< \delta\} \cup \{z\in \CC\mid |z-1|< \delta\}.$$
Then $\opn{Sp}'_{A_{t_0}}(p_0)=\{0, 1\}\subset U$. 
(Here $\opn{Sp}'_{\Ac}(a)$ denotes the 
spectrum of $a$ in the non-necessarily unital $C^*$-algebra $\Ac$; see \cite[1.1.6]{Di64}.)
Then, by \cite[20.1.10]{Di64},  there exists $x_1\in \Theta$ such that $x_1(t_0)=p_0$.
Define $x_2 = \frac{1}{2} (x_1 +x_1^*)$; 
then $x_2 \in \Theta$, $x_2= x_2^*$ and $x_2(t_0)= p_0$. 
It follows
by \cite[10.3.6]{Di64} that there is an open neighbourhood $V_0$ of $t_0$ in $S$ such that 
$\opn{Sp}'_{A_{t}}(x_2(t))\subset U$ for every $t\in V_0$. 
Hence, if  $f\in \Cc (\CC)$ is such that $f(t)=1$ for $t\in \{z\in \CC\mid |z-1|< \delta\}$ and 
$f(t) =0$ for $t \in \{ z\in \CC\mid |z|< \delta\}$, then $f(x_2(t)) \in \Pg(\Ac_t)$.
By \cite[10.3.3]{Di64}, $\theta_0= f(x_2)\in \Theta$.
Since the function $\Vert \theta_0(\cdot)\Vert$ is continuous on $V_0$ and $\Vert \theta_0(t)\Vert
\in \{0, 1\}$, it follows that $\Vert \theta_0(t)\Vert = \Vert \theta_0(t_0)\Vert=1$ for every 
$t \in  V_0$. 
We have thus obtained that $\theta_0(t) \in \Pg(\Ac_t) \setminus \{ 0\}$ for every  $ t \in V_0$, $\theta_0(t_0)= p_0$, hence $\theta_0$ satisfies all the conditions in the statement.
\end{proof}

\begin{proposition}\label{prop-cf2}
Let $((\Ac_t)_{t\in [0, 1]}, \Theta)$ be a continuous field of $C^*$-algebras,  trivial away from $0$ (that is, trivial on $(0, 1]$). 
If $\Ac_t$ is stably finite for $t\in (0, 1]$, then $\Ac_0$ is stably finite.
\end{proposition}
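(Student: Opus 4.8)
The plan is to argue by contradiction. First I would assume that $\Ac_0$ is \emph{not} stably finite and, using Proposition~\ref{P1}, fix $k\ge 1$ and a projection $p_0\in\Pg_k(\Ac_0)\setminus\{0\}$ with $[p_0]_0=0$ in $K_0(\Ac_0)$. I would then replace the field $((\Ac_t)_{t\in[0,1]},\Theta)$ by its matrix amplification $((M_k(\Ac_t))_{t\in[0,1]},M_k(\Theta))$ — which is again a continuous field over $[0,1]$ that is trivial on $(0,1]$ — and regard $p_0$ as a nonzero projection of the fibre $M_k(\Ac_0)$. Applying Lemma~\ref{lemma-cf1} at $t_0=0$ produces an open neighbourhood $V_0$ of $0$ and a section $\theta_0\in M_k(\Theta)\vert_{V_0}$ with $\theta_0(0)=p_0$ and $\theta_0(t)\in\Pg(M_k(\Ac_t))\setminus\{0\}$ for all $t\in V_0$. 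Shrinking, I may fix $\varepsilon\in(0,1]$ with $[0,\varepsilon]\subseteq V_0$.

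Next I would work with the $C^*$-algebra $\Bc:=M_k(\Theta)\vert_{[0,\varepsilon]}$ of continuous sections of the restricted field over $[0,\varepsilon]$, together with the evaluation $*$-morphisms $\ev_0\colon\Bc\to M_k(\Ac_0)$ and $\ev_\varepsilon\colon\Bc\to M_k(\Ac_\varepsilon)$. Because the field is trivial on $(0,1]\supseteq(0,\varepsilon]$, the kernel $\Jc:=\Ker\ev_0$ of the surjection $\ev_0$ consists exactly of the sections vanishing at $0$ and is $*$-isomorphic to $C_0((0,\varepsilon],M_k(\Ac_1))$, i.e.\ to the cone over $M_k(\Ac_1)$. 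The cone is contractible, so $K_0(\Jc)=K_1(\Jc)=0$, and exactness of the six-term sequence attached to $0\to\Jc\hookrightarrow\Bc\xrightarrow{\ev_0}M_k(\Ac_0)\to 0$ forces $K_0(\ev_0)\colon K_0(\Bc)\to K_0(M_k(\Ac_0))$ to be injective.

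Now $P:=\theta_0\vert_{[0,\varepsilon]}$ is a projection in $\Bc$ (it is one fibrewise), and under the canonical identification $K_0(M_k(\Ac_0))=K_0(\Ac_0)$ we have $K_0(\ev_0)([P]_0)=[p_0]_0=0$; injectivity then gives $[P]_0=0$ in $K_0(\Bc)$, so $[\theta_0(\varepsilon)]_0=K_0(\ev_\varepsilon)([P]_0)=0$ in $K_0(M_k(\Ac_\varepsilon))=K_0(M_k(\Ac_1))$. Since triviality on $(0,1]$ identifies $M_k(\Ac_\varepsilon)$ with $M_k(\Ac_1)$, the element $\theta_0(\varepsilon)$ is a nonzero projection in $\Pg_\infty(\Ac_1)$ whose class in $K_0(\Ac_1)$ vanishes, so Proposition~\ref{P1} shows that $\Ac_1$ is not stably finite — contradicting the hypothesis, as $1\in(0,1]$. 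Hence $\Ac_0$ is stably finite.

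The real content is concentrated in Lemma~\ref{lemma-cf1}, which promotes the fibrewise projection $p_0$ at $0$ to an honest projection of a section algebra over a neighbourhood of $0$; the rest is $K$-theory bookkeeping. The one step that deserves care is the identification of $\Ker\ev_0$ with a cone over $M_k(\Ac_1)$: this is precisely where the hypothesis ``trivial away from $0$'' is used, and it is what makes $\ev_0$ a $K_0$-monomorphism. One must also keep track of the canonical isomorphisms $K_0(M_k(-))\cong K_0(-)$ and of the fact that matrix amplification preserves both the continuous-field structure and its triviality on $(0,1]$, but these are routine.
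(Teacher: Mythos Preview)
Your argument is correct and follows essentially the same route as the paper's proof: both begin by contradicting Proposition~\ref{P1}, pass to matrix amplifications, and use Lemma~\ref{lemma-cf1} to lift $p_0$ to a section of nonzero projections near $0$. The only real difference is that where the paper invokes \cite[Thm.~3.1 and its proof]{ENN93} to obtain the group homomorphism $K_0(\Ac_0)\to K_0(\Ac_1)$ sending $[p_0]$ to $[\theta_0(1)]$, you instead build that map yourself by identifying $\Ker\ev_0$ with a cone and reading off from the six-term sequence that $K_0(\ev_0)$ is an isomorphism, then composing with $K_0(\ev_\varepsilon)$. Your version is more self-contained; the paper's is shorter by outsourcing exactly this step.
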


\begin{proof}
Assume that $\Ac_0$ is not stably finite. 
Then by Proposition~\ref{P1} \eqref{P1_item1} it follows that there is $k\ge 1$  and 
$p_0 \in \Pg(M_k(\Ac_0))\setminus \{\0_k\}$ such that $[p_0] =0 \in K_0(\Ac)$. 
Since $(M_k(\Ac_t))_{t\in [0, 1]}$ is a continuous field of $C^*$-algebras, trivial away from $0$ 
(see \cite[Thm.~2.4]{ENN93}), we may assume that $k=1$. 
By Lemma~\ref{lemma-cf1} and since $(\Ac_t)_{t\in [0, 1]}$ is trivial away from $0$, there is $\theta_0\in \Theta$ such that $\theta_0(0)=p_0$ and 
$\theta_0(t)\in \Pg(\Ac_t)\setminus \{0 \}$ for every $t \in [0, 1]$. 
It follows by \cite[Thm.~3.1 and its proof]{ENN93} that there is a group homomorphism 
$\varphi \colon K_0(\Ac_0) \to K_0(\Ac_t)$ such that 
$\varphi([p_0])= [\theta_0(1)]$. 
Since we have assumed that $[p_0]=0$, we get that for  $\theta_0(1)\in \Pg(\Ac_1)\setminus \{0\}$
 we have $[\theta_0(1)]=0$, thus by Proposition~\ref{P1} \eqref{P1_item1}, $\Ac_1$ is not stably finite. 
 This is a contradiction; thus $\Ac_0$ must be stably finite.
\end{proof}

\subsection{On open points in the primitive ideal spectrum}

\begin{proposition}\label{P3}
	Let $\Ac$ be a separable $C^*$-algebra
	If $\pi_0\colon\Ac\to\Bc(\Hc_0)$ is a $*$-representation  
	with its kernel $\Pc_0:=\Ker\pi_0\subseteq\Ac$ 
	and $\Kc(\Hc_0)\subseteq\pi_0(\Ac)\ne\{0\}$, 
	then the following conditions are equivalent: 
	\begin{enumerate}[{\rm(i)}]
		\item\label{P3_item1} $\{\Pc_0\}$ is an open subset of $\Prim(\Ac)$. 
		\item\label{P3_item2} There exists a closed two-sided ideal $\Jc_0\subseteq\Ac$ for which 
		$\pi_0\vert_{\Jc_0}\colon \Jc_0\to\Kc(\Hc_0)$ is a $*$-isomorphism.
		\end{enumerate}
		If these conditions are satisfied, then 
		 \begin{equation}\label{P3_proof_eq7}
		\Jc_0=\bigcap\limits_{\Pc\in\Prim(\Ac)\setminus\{\Pc_0\}}\Pc
		\end{equation}
		and moreover $\Jc_0$ is a minimal closed two-sided ideal of $\Ac$ 
		with 	$\Pc_0\cap\Jc_0=\{0\}$. 
\end{proposition}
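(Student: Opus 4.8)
The statement has three parts to prove once we know either condition holds: the equivalence \eqref{P3_item1}$\Leftrightarrow$\eqref{P3_item2}, the identity \eqref{P3_proof_eq7}, and the minimality plus the relation $\Pc_0\cap\Jc_0=\{0\}$. I would begin with \eqref{P3_item2}$\Rightarrow$\eqref{P3_item1}: if $\Jc_0$ is a closed two-sided ideal with $\pi_0|_{\Jc_0}$ a $*$-isomorphism onto $\Kc(\Hc_0)$, then $\Prim(\Jc_0)$ embeds as an open subset of $\Prim(\Ac)$ (the primitive ideals of $\Ac$ not containing $\Jc_0$ correspond bijectively to those of $\Jc_0$, by \cite[3.2.1]{Di64}); since $\Kc(\Hc_0)$ is simple, $\Prim(\Jc_0)$ is a single point, and its image is exactly $\{\Pc_0\}$ because $\Pc_0$ is the unique primitive ideal of $\Ac$ not containing $\Jc_0$ (any other primitive ideal contains $\Jc_0$ as its intersection with $\Jc_0$ is a proper, hence zero, ideal of the simple algebra $\Jc_0$).

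For the converse \eqref{P3_item1}$\Rightarrow$\eqref{P3_item2}: suppose $\{\Pc_0\}$ is open in $\Prim(\Ac)$. Then $U:=\Prim(\Ac)\setminus\{\Pc_0\}$ is closed, so it is the hull of an ideal $\Jc_0$, and I take $\Jc_0:=\bigcap_{\Pc\in U}\Pc$, which immediately gives \eqref{P3_proof_eq7}. By the openness, $\Prim(\Jc_0)$ is homeomorphic to the open complement $\{\Pc_0\}$, so $\Jc_0$ has exactly one primitive ideal, i.e. $\Jc_0$ is a (nonzero) simple $C^*$-algebra. The key point is that $\pi_0|_{\Jc_0}$ is a nonzero representation of the simple algebra $\Jc_0$, hence faithful; its image is a nonzero closed two-sided ideal of $\pi_0(\Ac)$, which contains $\Kc(\Hc_0)$ and lies inside it (as $\Kc(\Hc_0)\subseteq\pi_0(\Ac)$ is the unique minimal ideal and any nonzero ideal of an irreducible image algebra containing the compacts must equal... ) — more carefully: $\pi_0(\Jc_0)$ is an ideal of $\pi_0(\Ac)$, and since $\Jc_0\not\subseteq\Pc_0=\Ker\pi_0$ we have $\pi_0(\Jc_0)\ne\{0\}$; as $\pi_0$ is irreducible (because $\Kc(\Hc_0)\subseteq\pi_0(\Ac)$) any nonzero ideal of $\pi_0(\Ac)$ must meet $\Kc(\Hc_0)$ nontrivially, hence contain $\Kc(\Hc_0)$; but also $\pi_0(\Jc_0)\subseteq\Kc(\Hc_0)$ is forced since $\pi_0(\Jc_0)$ is a nonzero ideal of $\pi_0(\Ac)\supseteq\Kc(\Hc_0)$ with $\pi_0(\Ac)/\Kc(\Hc_0)$... here I would instead argue that $\Jc_0$, being simple, cannot surject onto anything larger without a nontrivial kernel. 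Cleanly: $\pi_0|_{\Jc_0}$ is faithful onto the ideal $\pi_0(\Jc_0)\trianglelefteq\pi_0(\Ac)$; both $\Kc(\Hc_0)$ and $\pi_0(\Jc_0)$ are ideals of the irreducible algebra $\pi_0(\Ac)$, so their intersection is a nonzero ideal, hence (as $\Kc(\Hc_0)$ is simple) equals $\Kc(\Hc_0)$, giving $\Kc(\Hc_0)\subseteq\pi_0(\Jc_0)$. For the reverse inclusion I use that $\Jc_0$ is the smallest ideal not annihilated by $\pi_0$: concretely $\pi_0^{-1}(\Kc(\Hc_0))$ is a closed two-sided ideal of $\Ac$ whose only primitive ideal not containing it is $\Pc_0$, so it contains $\Jc_0$ by \eqref{P3_proof_eq7}; thus $\pi_0(\Jc_0)\subseteq\Kc(\Hc_0)$. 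Combining, $\pi_0|_{\Jc_0}$ is a $*$-isomorphism onto $\Kc(\Hc_0)$.

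Finally, for the last sentence: $\Jc_0$ is simple by construction, hence a minimal closed two-sided ideal of $\Ac$ (any nonzero closed ideal contained in $\Jc_0$ equals $\Jc_0$). And $\Pc_0\cap\Jc_0$ is a closed two-sided ideal of the simple algebra $\Jc_0$; it is proper since $\Jc_0\not\subseteq\Pc_0$; therefore $\Pc_0\cap\Jc_0=\{0\}$.

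\textbf{Main obstacle.} The delicate step is pinning down that $\pi_0(\Jc_0)$ equals $\Kc(\Hc_0)$ exactly — one inclusion comes from simplicity of $\Kc(\Hc_0)$ and irreducibility of $\pi_0$, the other from characterizing $\Jc_0$ via the intersection formula \eqref{P3_proof_eq7} and comparing it with $\pi_0^{-1}(\Kc(\Hc_0))$. Managing separability (used to guarantee $\pi_0$ has the structure making $\Kc(\Hc_0)\subseteq\pi_0(\Ac)$ meaningful and to invoke the primitive-ideal correspondence cleanly) and making sure the ideal correspondence between $\Ac$ and $\Jc_0$ is applied correctly is where I would be most careful.
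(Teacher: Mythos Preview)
Your overall strategy is sound and differs from the paper's in an instructive way. The paper introduces an intermediate condition~(iii) --- the partition $\Prim(\Ac)=\{\Pc_0\}\sqcup\{\Pc\in\Prim(\Ac)\mid\Jc_0\subseteq\Pc\}$ --- and, for the key implication (i)$\Rightarrow$(ii), invokes the structure theorem for separable $C^*$-algebras with one-point dual (\cite[4.7.3]{Di64}) to identify $\Jc_0$ abstractly with an algebra of compact operators, then uses \cite[Cor.~4.1.5]{Di64} to match it with $\Kc(\Hc_0)$. Your route is more hands-on: you establish simplicity of $\Jc_0$ from the primitive-ideal correspondence, deduce faithfulness of $\pi_0\vert_{\Jc_0}$, and then pin down the image by ideal-theoretic arguments inside $\Bc(\Hc_0)$. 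This is more elementary and, as a bonus, does not actually use the separability hypothesis.

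There is, however, one genuine misstep. Your claim that ``the only primitive ideal not containing $\pi_0^{-1}(\Kc(\Hc_0))$ is $\Pc_0$'' is false in general: take $\Ac=\Kc(\Hc)\oplus\Kc(\Hc)$ with $\pi_0$ the projection onto the first summand; then $\pi_0^{-1}(\Kc(\Hc_0))=\Ac$, which is contained in no primitive ideal at all. What you actually need in order to conclude $\Jc_0\subseteq\pi_0^{-1}(\Kc(\Hc_0))$ via the lattice correspondence is merely that $\Pc_0$ lies in the open set associated to $\pi_0^{-1}(\Kc(\Hc_0))$, i.e., that $\Pc_0\not\supseteq\pi_0^{-1}(\Kc(\Hc_0))$, and this is immediate since $\Pc_0\subsetneq\pi_0^{-1}(\Kc(\Hc_0))$. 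Even cleaner is the argument you sketched and then abandoned: once you know $\Kc(\Hc_0)\subseteq\pi_0(\Jc_0)$ and that $\pi_0(\Jc_0)\cong\Jc_0$ is simple, observe that $\Kc(\Hc_0)$ is a nonzero closed ideal of $\Bc(\Hc_0)$ and hence of its subalgebra $\pi_0(\Jc_0)$; simplicity of $\pi_0(\Jc_0)$ then forces $\Kc(\Hc_0)=\pi_0(\Jc_0)$ directly.
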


\begin{proof}
The hypothesis $\Kc(\Hc_0)\subseteq\pi_0(\Ac)\ne\{0\}$ implies that the $*$-representation $\pi_0$ is irreducible and $\Hc_0\ne\{0\}$. 
Then, since $\Ac$ is separable, the Hilbert space $\Hc_0$ is separable, too.
We will show that both conditions in the statement are equivalent to the following: 
\begin{enumerate}[{\rm(i)}]
    \setcounter{enumi}{2}
	\item\label{P3_item3} There exists a closed two-sided ideal $\Jc_0\subseteq\Ac$ such that 
	\begin{equation}\label{P3-eq1}
	\Prim(\Ac)=\{\Pc_0\}\sqcup\{\Pc\in\Prim(\Ac)\mid \Jc_0\subseteq\Pc\}.
	\end{equation}
\end{enumerate}
	
\eqref{P3_item1}$\iff$\eqref{P3_item3}: 
This and  \eqref{P3_proof_eq7} follow by the definition of the topology of $\Prim(\Ac)$. 
(See \cite[3.1.1]{Di64}.) 

\eqref{P3_item3}$\implies$\eqref{P3_item2}: 
The hypothesis \eqref{P3-eq1} implies $\Jc_0\not\subseteq\Pc_0=\Ker\pi_0$, 
that is, $\pi_0\vert_{\Jc_0}\ne0$. 
Moreover, for every irreducible $*$-representation $\pi\colon\Ac\to\Bc(\Hc)$ we have 
$$\pi\vert_{\Jc_0}\ne0\iff\Jc_0\not\subseteq\Ker\pi
\mathop{\iff}\limits^{\eqref{P3-eq1}}\Ker\pi=\Pc_0
\iff[\pi]=[\pi_0]\in\widehat{\Ac}$$
where the last equivalence follows by \cite[Cor. 4.1.10]{Di64} 
since $\Kc(\Hc_0)\subseteq\pi_0(\Ac)$. 
Then, by \cite[Prop. 2.10.4]{Di64}, $\widehat{\Jc_0}$ consists of only one point, namely $\widehat{\Jc_0}=\{[\pi_0\vert_{\Jc_0}]\}$. 
Since $\Ac$ is separable it follows that $\Jc_0$ is separable, too. 
Then $\Jc_0$ is $*$-isomorphic to the $C^*$-algebra of all compact operators on a separable complex Hilbert space by \cite[4.7.3]{Di64}. 
Now, since $\pi_0\vert_{\Jc_0}\ne0$ and $\pi_0$ is an irreducible representation of~$\Ac$, hence $\pi_0\vert_{\Jc_0}\colon \Jc_0\to\Bc(\Hc_0)$  is an irreducible representation, it follows that 
$\pi_0\vert_{\Jc_0}\colon \Jc_0\to\Kc(\Hc_0)$ is a $*$-isomorphism. 
(See \cite[Cor. 4.1.5]{Di64}.) 
Hence \eqref{P3_item3} holds true with $\Jc_1:=\Jc_0$.

\eqref{P3_item2}$\implies$\eqref{P3_item3}: 
The hypothesis \eqref{P3_item2} implies $\widehat{\Jc_0}=\{[\pi_0\vert_{\Jc_0}]\}$. 
Then, 
for every irreducible $*$-representation $\pi\colon\Ac\to\Bc(\Hc)$, we have either $\pi\vert_{\Jc_0}=0$ or $[\pi\vert_{\Jc_0}]=[\pi_0\vert_{\Jc_0}]\in\widehat{\Jc_1}$. 
That is, either $\Jc_1\subseteq\Ker\pi$ or $[\pi]
=[\pi_0]\in\widehat{\Ac}$  by \cite[Prop. 2.10.4]{Di64}. 
Thus
$\Prim(\Ac)=\{\Ker\pi_0\}\sqcup\{\Pc\in\Prim(\Ac)\mid \Jc_1\subseteq\Pc\}$, 
hence~\eqref{P3_item2} holds true. 

Finally, if \eqref{P3_item1}--\eqref{P3_item3} hold true, then $\Jc_0$ is $*$-isomorphic to $\Kc(\Hc_0)$, hence $\Jc_0$ is a simple $C^*$-algebra, and then it is also a minimal ideal of $\Ac$ with $\Pc_0\cap\Jc_0=\{0\}$.
\end{proof}

\begin{remark}\label{R5}
\normalfont
In Proposition~\ref{P3} we have the short exact sequence 
$$0\to\Pc_0\hookrightarrow\pi_0^{-1}(\Kc(\Hc_0))\mathop{\longrightarrow}\limits^{\pi_0}\Kc(\Hc_0)\to0$$
and this extension is trivial in the sense that 
$\pi_0\vert_{\pi_0^{-1}(\Kc(\Hc_0))}$ has a right inverse, namely $(\pi_0\vert_{\Jc_0})^{-1}\colon\Kc(\Hc_0)\to\Jc_0$ given by  Proposition~\ref{P3}\eqref{P3_item3}. 
This also shows the direct sum decomposition of ideals 
$\pi_0^{-1}(\Kc(\Hc_0))=\Pc_0\dotplus\Jc_0$. 
\end{remark}

\begin{remark}\label{P3_group}
\normalfont 
The hypothesis $\Kc(\Hc_0)\subseteq\pi_0(\Ac)$ in Proposition~\ref{P3} is superfluous if  $\Ac=C^*(G)$ for an exponential Lie group~$G$.   
In fact, let $\pi_0\colon G\to\Bc(\Hc_0)$ be an irreducible unitary representation 
with its corresponding irreducible $*$-representation $\pi_0\colon \Ac\to\Bc(\Hc_0)$ with $\Pc_0:=\Ker\pi\subseteq\Ac$.  
Since $G$ is type~I, we have $\Kc(\Hc_0)\subseteq\pi_0(\Ac)$, 
and on the other hand $\{\Pc_0\}$ is an open subset of $\Prim(\Ac)$ if and only if the unitary representation~$\pi_0$ is square integrable, 
by \cite[Prop. 2.3 and 2.14]{Ros78} and \cite[Cor. 2]{Gr80}. 
Moreover the irreducible unitary representation~$\pi_0$ is square integrable if and only if its corresponding coadjoint orbit is open in~$\gg^*$ by 
\cite[Thm.~3.5]{Ros78}. 
This provides an alternative argument for the fact that the Kirillov-Bernat correspondence gives a bijection between the open points of $\Prim(G)$ and the open coadjoint orbits of~$G$, without using the more difficult and deep fact that the Kirillov-Bernat map is actually a homeomorphism. 
\end{remark}

\begin{corollary}\label{C4}
Let $\Ac$ be a $C^*$-algebra and, for $k=1,\dots,n$, let $\pi_k\colon\Ac\to\Bc(\Hc_j)$ be a $*$-representation satisfying  the hypotheses of Proposition~\ref{P3}, with its corresponding ideal $\Jc_k\subseteq\Ac$ for which 
$\pi_k\vert_{\Jc_k}\colon \Jc_k\to\Kc(\Hc_k)$ is a $*$-isomorphism, 
and $\Pc_k:=\Ker\pi_k$. 
Then the following assertions hold: 
\begin{enumerate}[{\rm(i)}]
\item\label{C4_item1} 
We have $\Jc_{k_1}=\Jc_{k_2}$ if and only if $\Pc_{k_1}=\Pc_{k_2}$. 
\item \label{C4_item2} 
If we assume $\Pc_{k_1}\ne\Pc_{k_2}$ for $k_1\ne k_2$, then
\begin{equation}\label{C4_item2_eq1} 
	\text{$\Jc:=\Jc_1+\dots+\Jc_n$ is a direct sum of ideals of $\Ac$}
\end{equation}
and
\begin{equation}\label{C4_item2_eq2}
	\Prim(\Ac)=\{\Pc_1\}\sqcup\cdots\sqcup\{\Pc_n\}\sqcup \{\Pc\in\Prim(\Ac)\mid\Jc\subseteq\Pc\}.
\end{equation}
\end{enumerate}
\end{corollary}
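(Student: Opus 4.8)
The plan is to reduce everything to repeated applications of Proposition~\ref{P3} and the structure of the topology on $\Prim(\Ac)$, handling the two assertions separately.

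For assertion~\eqref{C4_item1}, the point is that each $\Pc_k$ determines $\Jc_k$ and conversely. One direction: if $\Pc_{k_1}=\Pc_{k_2}=:\Pc_0$, then by the equivalence \eqref{P3_item1}$\iff$\eqref{P3_proof_eq7} in Proposition~\ref{P3}, both $\Jc_{k_1}$ and $\Jc_{k_2}$ equal $\bigcap_{\Pc\in\Prim(\Ac)\setminus\{\Pc_0\}}\Pc$, hence they coincide. Conversely, if $\Jc_{k_1}=\Jc_{k_2}=:\Jc_0$, then by Proposition~\ref{P3}\eqref{P3_item2}\,$\Rightarrow$\,\eqref{P3_item3} applied to $\pi_{k_1}$ we get $\Prim(\Ac)=\{\Pc_{k_1}\}\sqcup\{\Pc\mid\Jc_0\subseteq\Pc\}$, and likewise with $\Pc_{k_2}$; since $\Jc_0\not\subseteq\Pc_{k_1}$ and $\Jc_0\not\subseteq\Pc_{k_2}$, both $\Pc_{k_1}$ and $\Pc_{k_2}$ lie in the singleton complement of $\{\Pc\mid\Jc_0\subseteq\Pc\}$, forcing $\Pc_{k_1}=\Pc_{k_2}$. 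Equivalently, one can read off $\Pc_k=\bigcap_{\Pc\in\Prim(\Ac),\ \Jc_k\subseteq\Pc}\Pc$ directly.

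For assertion~\eqref{C4_item2}, assume the $\Pc_k$ are pairwise distinct. First I would establish that the sum $\Jc=\Jc_1+\dots+\Jc_n$ is direct, i.e.\ $\Jc_k\cap(\sum_{l\ne k}\Jc_l)=\{0\}$ for each $k$. Since each $\Jc_l$ with $l\ne k$ is contained in $\Pc_k$ (because $\Prim(\Ac)=\{\Pc_l\}\sqcup\{\Pc\mid\Jc_l\subseteq\Pc\}$ and $\Pc_k\ne\Pc_l$ forces $\Jc_l\subseteq\Pc_k$), the sum $\sum_{l\ne k}\Jc_l\subseteq\Pc_k$, while $\Jc_k\cap\Pc_k=\{0\}$ by the last clause of Proposition~\ref{P3}. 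This gives \eqref{C4_item2_eq1}. For the topological decomposition \eqref{C4_item2_eq2}, I would argue by induction on $n$, or directly: a primitive ideal $\Pc$ either contains $\Jc$ — equivalently contains every $\Jc_k$ — or fails to contain some $\Jc_{k_0}$, in which case $\Prim(\Ac)=\{\Pc_{k_0}\}\sqcup\{\Pc'\mid\Jc_{k_0}\subseteq\Pc'\}$ forces $\Pc=\Pc_{k_0}$. Finally one must check that $\Pc_k$ itself does not contain $\Jc$ (clear, since $\Jc_k\subseteq\Jc$ and $\Jc_k\not\subseteq\Pc_k$) and that the $\{\Pc_k\}$ are genuinely distinct points of the disjoint union, which is the hypothesis; this yields the disjointness of all the pieces in \eqref{C4_item2_eq2}.

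The only mildly delicate point is bookkeeping in the induction/union argument for \eqref{C4_item2_eq2}: one needs that $\Jc\subseteq\Pc$ is equivalent to $\Jc_k\subseteq\Pc$ for all $k$, which is immediate since $\Jc=\sum_k\Jc_k$, and that the complement of $\{\Pc\mid\Jc\subseteq\Pc\}$ in $\Prim(\Ac)$ is exactly $\{\Pc_1,\dots,\Pc_n\}$; the latter follows because any $\Pc$ not containing $\Jc$ misses some $\Jc_{k_0}$ and is then pinned to $\Pc_{k_0}$ by Proposition~\ref{P3}\eqref{P3_item3}. I do not expect a real obstacle here — everything is a formal consequence of Proposition~\ref{P3} and the definition of the hull-kernel topology — so the main task is simply to organize the deductions cleanly.
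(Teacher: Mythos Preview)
Your proposal is correct and follows essentially the same route as the paper: both parts are deduced from Proposition~\ref{P3} (specifically from \eqref{P3_proof_eq7} and the decomposition \eqref{P3-eq1}) together with elementary hull-kernel bookkeeping. The only cosmetic difference is in the directness argument for \eqref{C4_item2_eq1}: the paper invokes minimality of the $\Jc_k$ to get $\Jc_{k_1}\cap\Jc_{k_2}=\{0\}$ pairwise, whereas you use $\sum_{l\ne k}\Jc_l\subseteq\Pc_k$ together with $\Jc_k\cap\Pc_k=\{0\}$; both are immediate from Proposition~\ref{P3}.
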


\begin{proof}
\eqref{C4_item1} 
We have $\Prim(\Ac)\setminus\{\Pc_k\}=\{\Pc\in\Prim(\Ac)\mid \Jc_k\subseteq\Pc\} $ by \eqref{P3-eq1}, 
hence $\Jc_{k_1}=\Jc_{k_2}$ implies $\Pc_{k_1}=\Pc_{k_2}$. 
Conversely,  by \eqref{P3_proof_eq7}, 
$\Pc_{k_1}=\Pc_{k_2}$ implies $\Jc_{k_1}=\Jc_{k_2}$. 

\eqref{C4_item2} 
If $k_1\ne k_2$, then $\Pc_{k_1}\ne\Pc_{k_2}$ by hypothesis, hence $\Jc_{k_1}\ne\Jc_{k_2}$ by \eqref{C4_item1}. 
Since both $\Jc_{k_1}$ and $\Jc_{k_2}$ are distinct minimal ideals of $\Ac$ 
and $\Jc_{k_1}\Jc_{k_2}\subseteq\Jc_{k_1}\cap \Jc_{k_2}$, 
we obtain $\Jc_{k_1}\Jc_{k_2}=\Jc_{k_1}\cap \Jc_{k_2}=\{0\}$,  and then 
\eqref{C4_item2_eq1}  is straightforward.

We now prove \eqref{C4_item2_eq2}. 
In fact, by \eqref{P3-eq1}, 
we have 
$\Prim(\Ac)\setminus\{\Pc_k\}=\{\Pc\in\Prim(\Ac)\mid \Jc_k\subseteq\Pc\}$ for $k=1,\dots,n$, hence 
\allowdisplaybreaks
\begin{align*}
\Prim(\Ac)\setminus\{\Pc_1,\dots,\Pc_n\}
&=\bigcap_{k=1}^n\Prim(\Ac)\setminus\{\Pc_k\}  
=\bigcap_{k=1}^n \{\Pc\in\Prim(\Ac)\mid \Jc_k\subseteq\Pc\} \\
&=\{\Pc\in\Prim(\Ac)\mid \Jc_1+\cdots+\Jc_n\subseteq\Pc\}.
\end{align*}
This finishes the proof. 
\end{proof}

The  next result is needed in the proof of  Corollary~\ref{cf-cor8}.
 
\begin{proposition}\label{proj}
	Assume the setting of Proposition~\ref{P3} 
	and, additionally, that 
	\begin{enumerate}[{\rm(i)}]
	 \item\label{proj_item1} $\Ac$ is a ``real'' $C^*$-algebra; 
	 \item\label{proj_item2} $\Pc_0\cap\overline{\Pc_0}=\{0\}$; 
	 \item\label{proj_item3} if $p\in\Jc_0$ is a minimal projection, then 
	 $K_0(\Ac)=\{n[p]_0\mid n\in\ZZ\}$; 
	 \item\label{proj_item4} $\Pg(A)\setminus(\Jc_0+\overline{\Jc_0})\ne\emptyset$. 
	\end{enumerate}
Then $\Ac$ is not stably finite. 
\end{proposition}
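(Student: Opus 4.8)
The plan is to exhibit a nonzero projection $P\in\Pg_\infty(\Ac)$ with $[P]_0=0$ in $K_0(\Ac)$, so that $\Ac$ fails to be stably finite by Proposition~\ref{P1}. Fix a minimal projection $p_0\in\Jc_0$; since $\tau$ is a ``real'' automorphism of $\Ac$, the element $\overline{p_0}:=\tau(p_0)$ is a minimal projection of the ``real'' ideal $\overline{\Jc_0}:=\tau(\Jc_0)$, and $\overline{\Jc_0}$ plays for the conjugate representation $\overline{\pi_0}$ (whose kernel is $\overline{\Pc_0}$) the role $\Jc_0$ plays for $\pi_0$, by \eqref{P3_proof_eq7}. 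Hypothesis~\eqref{proj_item3} gives $K_0(\Ac)=\ZZ[p_0]_0$. I would first treat the case in which $[p_0]_0$ has finite order: since $\Jc_0\simeq\Kc(\Hc_0)$ we have $K_0(\Jc_0)\simeq\ZZ$ with positive cone $\ZZ_{\ge 0}[p_0]_{0,\Jc_0}$, so for the inclusion $\varphi\colon\Jc_0\hookrightarrow\Ac$ the set $K_0(\Jc_0)^+\cap\Ker K_0(\varphi)$ contains a nonzero multiple of $[p_0]_{0,\Jc_0}$, whence the contrapositive of Corollary~\ref{Sp88_Lemma1.5} \eqref{Sp88_Lemma1.5_item1} shows $\Ac$ is not stably finite. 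From now on $K_0(\Ac)\simeq\ZZ$, so the involutive automorphism $x\mapsto\overline{x}$ of $K_0(\Ac)$ equals $\varepsilon\cdot\id$ for some $\varepsilon\in\{-1,1\}$, that is, $\overline{[p_0]_0}=\varepsilon[p_0]_0$.

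If $\varepsilon=-1$ it suffices to take $P:=p_0\oplus\overline{p_0}\in\Pg_2(\Ac)$: then $P\ne 0$ (as $p_0\ne 0$) and $[P]_0=[p_0]_0+\overline{[p_0]_0}=0$, so Proposition~\ref{P1} applies. This uses only \eqref{proj_item1} and \eqref{proj_item3}; the case $\varepsilon=+1$ is the crux, and it is here that \eqref{proj_item2} and \eqref{proj_item4} become indispensable. Indeed, when $\varepsilon=+1$ every projection sitting inside $\Jc_0$ or inside $\overline{\Jc_0}$ has $K_0$-class in $\ZZ_{\ge 0}[p_0]_0$, so a positive multiple of $[p_0]_0$ cannot be cancelled by such projections; instead one is forced to ``subtract'' subprojections of the projection $q$ supplied by \eqref{proj_item4}, and for this one first needs an irreducible representation of $\Ac$ in which $q$ has infinite rank.

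So assume $\varepsilon=+1$, fix $q\in\Pg(\Ac)\setminus(\Jc_0+\overline{\Jc_0})$ (hence $q\ne 0$), and write $[q]_0=n[p_0]_0$ with $n\in\ZZ$. By \eqref{proj_item2} the representation $\rho:=\pi_0\oplus\overline{\pi_0}$ has kernel $\Pc_0\cap\overline{\Pc_0}=\{0\}$, so it is faithful. If $\Pc_0=\overline{\Pc_0}$, then by \eqref{P3_proof_eq7} also $\overline{\Jc_0}=\tau(\Jc_0)=\Jc_0$, and by \eqref{proj_item2} $\Pc_0=\Pc_0\cap\overline{\Pc_0}=\{0\}$, so $\pi_0$ is faithful with $\pi_0(\Jc_0)=\Kc(\Hc_0)$, whence $\pi_0(q)\notin\Kc(\Hc_0)$. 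If $\Pc_0\ne\overline{\Pc_0}$, then \eqref{P3_proof_eq7} gives $\Jc_0\subseteq\overline{\Pc_0}$ and $\overline{\Jc_0}\subseteq\Pc_0$, so $\overline{\pi_0}$ annihilates $\Jc_0$ and $\pi_0$ annihilates $\overline{\Jc_0}$; hence $\rho(\Jc_0+\overline{\Jc_0})$ consists of block-diagonal compact operators, and faithfulness of $\rho$ together with $q\notin\Jc_0+\overline{\Jc_0}$ forces $\pi_0(q)$ or $\overline{\pi_0}(q)$ to be non-compact. In either case I can choose $\sigma\in\{\pi_0,\overline{\pi_0}\}$ so that $\sigma(q)$ is a non-compact, hence infinite-rank, projection, together with the matching minimal ideal $\Ic\in\{\Jc_0,\overline{\Jc_0}\}$; by Proposition~\ref{P3}, $\sigma\vert_{\Ic}$ is a $*$-isomorphism onto the compact operators on the Hilbert space of $\sigma$, and $\Ic\cap\Ker\sigma=\{0\}$.

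The final step is to peel minimal projections off $q$. Inductively, if $p_1,\dots,p_k\in\Ic$ are mutually orthogonal minimal projections with $p_1+\dots+p_k\le q$, then $\sigma(q)-\sigma(p_1+\dots+p_k)$ has infinite rank, hence dominates a rank-one projection $f_{k+1}$; let $p_{k+1}\in\Ic$ be the unique projection with $\sigma(p_{k+1})=f_{k+1}$. Since $p_{k+1}\in\Ic$ and $\sigma\bigl(p_{k+1}(q-p_1-\dots-p_k)-p_{k+1}\bigr)=0$, this element lies in $\Ic\cap\Ker\sigma=\{0\}$, so $p_{k+1}\le q-p_1-\dots-p_k$; moreover each $p_j$ is minimal in $\Ic\simeq\Kc$, so $[p_j]_0=[p_0]_0$ in $K_0(\Ac)$ (when $\Ic=\overline{\Jc_0}$ this identity uses $\varepsilon=+1$). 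If $n\ge 1$, set $P:=q-p_1-\dots-p_n$: then $[P]_0=[q]_0-n[p_0]_0=0$, while $\sigma(P)=\sigma(q)-(f_1+\dots+f_n)$ still has infinite rank, so $P\ne 0$. If $n\le 0$, set $P:=q\oplus p_0\oplus\dots\oplus p_0$ with $-n$ copies of $p_0$: then $[P]_0=[q]_0+(-n)[p_0]_0=0$ and $P\ne 0$ since $q\ne 0$. In all cases Proposition~\ref{P1} yields that $\Ac$ is not stably finite. I expect the main obstacle to be exactly the subcase $\varepsilon=+1$ with $n\ge 1$, handled by this peeling construction, which is the point at which the representation-theoretic hypotheses \eqref{proj_item2} and \eqref{proj_item4} cannot be dispensed with.
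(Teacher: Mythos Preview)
Your proof is correct but takes a more circuitous route than the paper's. The paper dispenses entirely with your preliminary case distinctions (finite order of $[p_0]_0$, and the sign $\varepsilon$ of the conjugation action on $K_0(\Ac)$): it simply picks $q\in\Pg(\Ac)\setminus(\Jc_0+\overline{\Jc_0})$, writes $[q]_0=n[p]_0$, and treats the three cases $n=0$, $n<0$, $n>0$ directly, never invoking the map $x\mapsto\overline{x}$ on $K_0$. In the case $n>0$ it shows that \emph{both} $\pi_0(q)$ and $\overline{\pi_0}(q)$ have infinite rank (not just one), and then subtracts a single rank-$n$ subprojection $p_1\in\Jc_0$ from $q$ in one step, rather than peeling off rank-one pieces inductively. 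The paper verifies $p_1\le q$ via faithfulness of $\pi_0\oplus\overline{\pi_0}$; your alternative, observing that $p_{k+1}(q-\cdots)-p_{k+1}\in\Ic\cap\Ker\sigma=\{0\}$, is a clean local argument that sidesteps the need for a faithful direct sum. Conversely, you explicitly handle the degenerate case $\Pc_0=\overline{\Pc_0}$ (forcing $\Pc_0=\{0\}$ by~\eqref{proj_item2}), which the paper glosses over by asserting $\Pc_0\ne\overline{\Pc_0}$. Your $\varepsilon=-1$ shortcut is a valid observation but buys nothing in this proposition, since the paper's uniform three-case argument already covers that situation without any appeal to the ``real'' structure on $K_0$.
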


\begin{proof}
We define
$\overline{\pi_0}\colon\Ac\to\Bc(\Hc_0)$, $\overline{\pi_0}(a):=C\pi_0(\overline{a})C$, 
for a fixed antilinear involutive isometry $C\colon \Hc\to\Hc$. 
Then $\overline{\pi_0}$ is an irreducible $*$-representation 
with $\Ker\overline{\pi_0}=\overline{\Pc_0}$ and $\{\overline{\Pc_0}\}$ is an open subset of $\Prim(\Ac)$. 
Moreover, using~\eqref{P3_proof_eq7}, one can show that 
$\Jc_{\overline{\pi_0}}=\overline{\Jc_0}$, where $\Jc_{\overline{\pi_0}}$ is the minimal ideal of $\Ac$ that is given by Proposition~\ref{P3} for the representation $\overline{\pi_0}$. 
By Remark~\ref{R5}, we then obtain 
\begin{equation}\label{proj_proof_eq1}
\overline{\pi_0}^{-1}(\Kc(\Hc_0))
=\Jc_{\overline{\pi_0}}\dotplus \Ker\overline{\pi_0}
=\overline{\Jc_0}\dotplus \overline{\Ker\pi_0} 
=\overline{\pi_0^{-1}(\Kc(\Hc_0))}.
\end{equation}
Since $\Pc_0\cap\overline{\Pc_0}=\{0\}$ by hypothesis, we may use Corollary~\ref{C4} for the representations $\pi_0$ and $\overline{\pi_0}$, and we thus obtain $\Jc_0\cdot\overline{\Jc_0}=\{0\}$. 

Now let us denote $\Jc:=\Jc_0\dotplus \overline{\Jc_0}$ 
and select any $q\in \Pg(A)\setminus\Jc$. 
We show that 
\begin{equation}\label{proj_proof_eq2}
\dim(\pi_0(q)\Hc)=\infty\text{ and }\dim(\overline{\pi_0}(q)\Hc)=\infty.
\end{equation}
In fact, since $\pi_0(q),\overline{\pi_0}(q)\in\Bc(\Hc_0)$ are projections, 
it suffices to show that $\pi_0(q)\not\in \Kc(\Hc_0)$, and this will imply $\overline{\pi_0}(q)\not\in\Kc(\Hc_0)$ by~\eqref{proj_proof_eq1}. 
We argue by contradiction: 
Assuming $\pi_0(q)\in \Kc(\Hc_0)$, we obtain $\overline{\pi_0}(q)\in\Kc(\Hc_0)$  by~\eqref{proj_proof_eq1} and then,  by Proposition~\ref{P3} there exist uniquely determined projections $p_0\in\Pg(\Jc_0)$ and $r_0\in\Pg(\overline{\Jc_0})$ with $\pi_0(p_0)=\pi_0(q)$ and $\overline{\pi_0}(r_0)=\overline{\pi_0}(q)$. 
Since $\Pc_0\ne\overline{\Pc_0}$ by the hypothesis~\eqref{proj_item2}, 
we have 
$\Jc_0\subseteq\overline{\Pc_0}$ and $\overline{\Jc_0}\subseteq\Pc_0$ by \eqref{P3-eq1} in Proposition~\ref{P3}, and we then obtain 
$(\pi_0\oplus\overline{\pi_0})(p_0+r_0)=\pi_0(q)\oplus \overline{\pi_0}(q)
=(\pi_0\oplus\overline{\pi_0})(q)$. 
The hypothesis $\Pc_0\cap\overline{\Pc_0}=\{0\}$ then implies $q=p_0+r_0\in\Jc_0\dotplus \overline{\Jc_0}=\Jc$, which is a contradiction with the way $q$ was selected. 
Thus \eqref{proj_proof_eq2} is proved. 

Now, if $p\in\Jc_0$ is a minimal projection, it follows by the hypothesis that there exists $n\in\ZZ$ with $[q]_0=n[p]_0\in K_0(\Ac)$. 
There are three possible cases: 

Case 1: $n=0$. 
Then $[q]_0=0\in K_0(\Ac)$, and Proposition~\ref{P1} shows that $\Ac$ is not stably finite. 

Case 2: $n<0$. 
Then, denoting $k:=\vert n\vert$, we have 
$$0=[q]_0+k[p]_0=[q\oplus\underbrace{p\oplus\cdots\oplus p}_{k\text{ times}}]_0$$
hence, since $q\oplus p\oplus\cdots\oplus p\in M_{k+1}(\Ac)\setminus\{0\}$, 
 Proposition~\ref{P1} again shows that $\Ac$ is not stably finite. 
 
 Case 3: $n>0$. 
 In this case, by \eqref{proj_proof_eq2}, there exists $\widetilde{p}_1\in\Pg(\Kc(\Hc_0))$ with $\widetilde{p}_1\le\pi(q)$ and $\dim(\widetilde{p}_1(\Hc))=n$. 
 By Proposition~\ref{P3}, there exists a unique $p_1\in\Pg(\Jc_0)$ with $\pi_0(p_1)=\widetilde{p}_1$. 
 We already noted above that $\Jc_0\subseteq\overline{\Pc_0}=\Ker\overline{\pi_0}$, 
 hence $\overline{\pi_0}(p_1)=0$, and then 
 $$(\pi_0\oplus\overline{\pi_0})(p_1)=\pi_0(p_1)\oplus 0=\widetilde{p}_1\oplus 0\le\pi_0(q)\oplus\overline{\pi_0}(q)=(\pi_0\oplus\overline{\pi_0})(q).$$
 As above, the hypothesis hypothesis $\Pc_0\cap\overline{\Pc_0}=\{0\}$ then implies 
 $p_1\le q$, hence $q-p_1\in\Pg(\Ac)$ and $p_1(q-p_1)=0$. 
 Now, by \cite[3.1.7(iv)]{RLL00}, we obtain 
 \begin{equation}\label{proj_proof_eq3}
 [q]_0=[p_1]_0+[q-p_1]_0\in K_0(\Ac)\subseteq K_0(\widetilde{\Ac}).
 \end{equation}
 On the other hand, since  $\dim(\widetilde{p}_1(\Hc))=n$ and $\pi_0\vert_{\Jc_0}\colon\Jc_0\to\Kc(\Hc_0)$ is a $*$-iso\-mor\-phism, we obtain $[p_1]_0=n[p]_0$ in $K_0(\Jc_0)$. 
 Denoting by $\varphi\colon \Jc_0\to\Ac$ the inclusion map, 
 it then follows that 
 $K_0(\varphi)([p_1]_0)=nK_0(\varphi)([p]_0)$ in $K_0(\Ac)$, 
 that is, $[p_1]_0=n[p]_0$ in $K_0(\Ac)$. 
 Then, using \eqref{proj_proof_eq3} and the way $n$ was chosen, we obtain 
 $[q-p_1]_0=0\in K_0(\Ac)$. 
 On the other hand, $q-p_1\ne 0$ since $p_1\in\Jc_0\subseteq\Jc$, while $q\in\Pc(\Ac)\setminus\Jc$. 
 We may thus apply Proposition~\ref{P1} to obtain that $\Ac$ is not stably finite. 
\end{proof}

\section{$C^*$-algebras of exponential Lie groups with open coadjoint orbits}\label{section4k}

This section contains some of our  results on the relation between the quasi-compact open subsets in the primitive ideal space of the $C^*$-algebra of a solvable Lie group and the finite approximation properties of that $C^*$-algebra (Corollaries \ref{solv-4n+2}~and~\ref{cf-cor8}).  
These results mostly concern the exponential Lie groups that admit open coadjoint orbits. 
However we start with some results on general solvable Lie groups.

\subsection{Solvable Lie groups of dimension $\not \in 4 \ZZ$}
\begin{theorem}\label{4n+2}
Let $G$ be 
a  solvable Lie group  with $\dim G \not \in 4 \ZZ$. 
 Then  $C^*(G)$ is stably finite if and only if it is 
 is stably projectionless.
 \end{theorem}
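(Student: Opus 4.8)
The plan is to reduce the statement to the $K$-theoretic criterion for stable finiteness in Proposition~\ref{P1}, and then exploit the extra constraint coming from the ``real'' structure on $C^*(G)$ via Corollary~\ref{signs_solvable}. One direction is immediate: by Corollary~\ref{rem-1.5}\eqref{rem-1.5_i}, stably projectionless always implies stably finite, for any $C^*$-algebra, so nothing about the dimension is needed there. The content is therefore the converse: assuming $\dim G\not\in 4\ZZ$ and $C^*(G)$ stably finite, I would show $C^*(G)$ is stably projectionless.

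The argument I have in mind is by contraposition. Suppose $C^*(G)$ is not stably projectionless; then there are $k\ge 1$ and a nonzero $p\in\Pg_k(C^*(G))$, giving a class $[p]_0\in K_0(C^*(G))^+$. I would then split into the two parity cases of $n:=\dim G$. If $n$ is odd (so $n\in 4\ZZ+1$ or $n\in 4\ZZ+3$), Corollary~\ref{signs_solvable}\eqref{signs_solvable_item2} gives $K_0(C^*(G))=\{0\}$, so $[p]_0=0$, and Proposition~\ref{P1} (\eqref{P1_item1}$\Rightarrow$\eqref{P1_item3}) already forces $C^*(G)$ not to be stably finite, contradiction; so in the odd-dimensional case $C^*(G)$ is automatically stably projectionless once stably finite, without even needing the $4\ZZ+1$ vs $4\ZZ+3$ distinction. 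If $n\in 4\ZZ+2$, then by Corollary~\ref{signs_solvable}\eqref{signs_solvable_item1} we have $K_0(C^*(G))\simeq\ZZ$ and, crucially, $\overline{x}=-x$ for every $x\in K_0(C^*(G))$. Apply this to $x=[p]_0$: the ``real'' structure sends $[p]_0$ to $[\overline{p}]_0$, which is again of the form $[p']_0$ with $p'=\overline{p}\in\Pg_k(C^*(G))$ a nonzero projection (the conjugate of a projection is a projection, and conjugation is injective). Thus $[p]_0$ and $-[p]_0=[\overline{p}]_0$ both lie in $K_0(C^*(G))^+=\{[\text{projection}]_0\}$. Under the isomorphism $K_0\simeq\ZZ$, the positive cone $K_0^+$ is closed under addition, so $[p]_0\in K_0^+$ and $-[p]_0\in K_0^+$ together force (adding them) $0\in$ the class of a projection that is a sum $p\oplus\overline{p}$, i.e. $[p\oplus\overline{p}]_0=0$ with $p\oplus\overline{p}$ a nonzero projection; then Proposition~\ref{P1} again yields that $C^*(G)$ is not stably finite, a contradiction. (Alternatively: in $\ZZ$, if $m>0$ then $-m\notin K_0^+$ unless the cone is everything, which would itself produce a vanishing class of a nonzero projection — either way we land on Proposition~\ref{P1}.)

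The one place that needs a little care — and what I would treat as the main technical point — is making precise that $K_0(C^*(G))^+$ being the cone of classes of honest projections, combined with $[p]_0\ne 0$ and $\overline{[p]_0}=-[p]_0\in K_0(C^*(G))^+$, really does produce a \emph{nonzero} projection with vanishing $K_0$-class. The clean way is: from $[p]_0\in K_0^+$ and $[\overline p]_0=-[p]_0\in K_0^+$ we get $[p\oplus\overline p]_0=[p]_0+[\overline p]_0=0$; since $p\ne 0$, $p\oplus\overline p\ne 0$; so Proposition~\ref{P1}\eqref{P1_item1} applies verbatim. This uses only that the canonical ``real'' structure on $C^*(G)$ (coming from complex conjugation of functions on $G$) descends to the automorphism $x\mapsto\overline x$ of $K_0$ as set up before Lemma~\ref{deltas}, and that $\overline p\in\Pg_k(C^*(G))$ whenever $p\in\Pg_k(C^*(G))$, which is recorded in the discussion of the canonical ``real'' structure on $M_k(\Ac)$.

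In summary, the skeleton is: (i) easy direction from Corollary~\ref{rem-1.5}\eqref{rem-1.5_i}; (ii) for the converse, contrapositive — a nonzero projection gives $[p]_0\in K_0^+$; (iii) if $\dim G$ is odd, $K_0=0$ kills $[p]_0$ and Proposition~\ref{P1} finishes; (iv) if $\dim G\in 4\ZZ+2$, the sign relation $\overline{[p]_0}=-[p]_0$ from Corollary~\ref{signs_solvable} puts both $[p]_0$ and $-[p]_0$ in the cone of projection classes, hence $[p\oplus\overline p]_0=0$ with $p\oplus\overline p$ a nonzero projection, and Proposition~\ref{P1} again forces failure of stable finiteness. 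The role of the hypothesis $\dim G\not\in 4\ZZ$ is exactly to guarantee, via Corollary~\ref{signs_solvable}, that conjugation acts by $-1$ (or that $K_0$ vanishes), which is false when $\dim G\in 4\ZZ$ — matching the fact that the statement genuinely fails there, as the later examples show.
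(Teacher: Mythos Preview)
Your proposal is correct and follows essentially the same route as the paper's proof: the easy direction via Corollary~\ref{rem-1.5}\eqref{rem-1.5_i}, the odd-dimensional case via $K_0(C^*(G))=\{0\}$, and the $4\ZZ+2$ case via the sign relation $\overline{[p]_0}=-[p]_0$ from Corollary~\ref{signs_solvable}\eqref{signs_solvable_item1} together with Proposition~\ref{P1}. Your treatment of the $4\ZZ+2$ case is in fact slightly more streamlined than the paper's, which splits into the subcases $p=\overline{p}$ and $p\ne\overline{p}$; your observation that $p\oplus\overline{p}\ne 0$ whenever $p\ne 0$ makes that split unnecessary.
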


\begin{proof}
The fact that if $C^*(G)$  is stably projectionless then it is stably finite follows from Corollary~\ref{rem-1.5} 
\eqref{rem-1.5_i}. 

For the reverse implication assume first that $\dim G$ is odd. 
Recall from Corollary~\ref{signs_solvable} 
that  $K_i(C^*(G))= K_i(\RR^{\dim G})$, $i =0, 1$. 
Hence, if $\dim G$ is odd, we have $K_0(C^*(G))=\{0\}$.
Then the statement
is a direct consequence of Lemma~\ref{rem-1.5}.

It remains to analyze the case when $\dim G\in 4 \ZZ +2$. 
We prove  that  if  $C^*(G)$ is not stably projectionless then it is not stably finite. 
Let $0\ne p\in \Pg_k(C^*(G))$. 
Then 
$ [p]_0+[\overline{p}]_0=  [p]_0+\overline{[p]}_0 = 0$
by Corollary~\ref{signs_solvable}\eqref{signs_solvable_item1}.
If $p=\overline{p}$  it follows that $[p]_0=0$, hence $C^*(G)$ is not stably finite, 
by Proposition~\ref{P1}.
If $p \ne  \overline{p}$, define  $ q:= p\oplus \overline{p}=\opn{diag}(p, \overline{p}) \in \Pg_{2k} (C^*(G))\setminus\{0\}$.
Then $[q]_0= [p]_0+[\overline{p}]_0= 0$ as above by Corollary~\ref{signs_solvable}\eqref{signs_solvable_item1}, hence, again by Proposition~\ref{P1},  $C^*(G)$ is not stably finite. 
\end{proof}

\begin{corollary}\label{solv-4n+2}
Let $G$  be an exponential solvable Lie group with $\dim G \in 4\ZZ+2$. 
If $G$ has  open coadjoint orbits, 
then $C^*(G)$ is not stably finite. 
\end{corollary}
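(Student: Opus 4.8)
The plan is to deduce Corollary~\ref{solv-4n+2} immediately from Theorem~\ref{4n+2} together with the basic consequences of having an open coadjoint orbit recorded in Remark~\ref{P3_group}. Since $\dim G\in 4\ZZ+2$ is not divisible by $4$, Theorem~\ref{4n+2} tells us that $C^*(G)$ is stably finite if and only if it is stably projectionless. So it suffices to show that if $G$ has an open coadjoint orbit, then $C^*(G)$ is \emph{not} stably projectionless; the contrapositive of Theorem~\ref{4n+2} then yields that $C^*(G)$ is not stably finite.

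First I would use the Kirillov--Bernat correspondence for exponential solvable Lie groups: an open coadjoint orbit $\Oc\subseteq\gg^*$ corresponds to an irreducible unitary representation $\pi_0\colon G\to\Bc(\Hc_0)$ which is square integrable, and, as explained in Remark~\ref{P3_group}, the singleton $\{\Ker\pi_0\}$ is then an open subset of $\Prim(G)$ and $\Kc(\Hc_0)\subseteq\pi_0(C^*(G))$ (using that exponential solvable Lie groups are type~I). Applying Proposition~\ref{P3} with $\Ac=C^*(G)$, the open point $\{\Pc_0\}=\{\Ker\pi_0\}$ produces a closed two-sided ideal $\Jc_0\subseteq C^*(G)$ with $\pi_0\vert_{\Jc_0}\colon\Jc_0\to\Kc(\Hc_0)$ a $*$-isomorphism. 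In particular $\Jc_0\cong\Kc(\Hc_0)$ contains a nonzero (rank-one) projection $p$, and hence $p\in\Pg(C^*(G))\setminus\{0\}\subseteq\Pg_\infty(C^*(G))\setminus\{0\}$. Therefore $C^*(G)$ is not stably projectionless.

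Combining this with Theorem~\ref{4n+2} finishes the proof: since $C^*(G)$ is not stably projectionless and $\dim G\notin 4\ZZ$, it is not stably finite. I do not anticipate a genuine obstacle here, as all the substantive work (the hexagon/Thom-isomorphism sign computation behind Corollary~\ref{signs_solvable}, and the structural analysis in Proposition~\ref{P3}) has already been carried out earlier in the paper; the only point requiring a word of care is the invocation of the Kirillov--Bernat theory to convert ``open coadjoint orbit'' into ``square-integrable irreducible representation with an open point in $\Prim(G)$,'' which is exactly the content recalled in Remark~\ref{P3_group} (via \cite{Ros78} and \cite{Gr80}) and so may be cited directly.

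\begin{proof}
Since $\dim G\in 4\ZZ+2$, we have $\dim G\notin 4\ZZ$, so by Theorem~\ref{4n+2} it suffices to prove that $C^*(G)$ is not stably projectionless. Let $\Oc\subseteq\gg^*$ be an open coadjoint orbit and let $\pi_0\colon G\to\Bc(\Hc_0)$ be an irreducible unitary representation corresponding to $\Oc$ under the Kirillov--Bernat correspondence. By Remark~\ref{P3_group}, the representation $\pi_0$ is square integrable, $\{\Pc_0\}$ is an open subset of $\Prim(G)$ with $\Pc_0:=\Ker\pi_0$, and $\Kc(\Hc_0)\subseteq\pi_0(C^*(G))$. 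Thus $\pi_0$ satisfies the hypotheses of Proposition~\ref{P3}, which provides a closed two-sided ideal $\Jc_0\subseteq C^*(G)$ such that $\pi_0\vert_{\Jc_0}\colon\Jc_0\to\Kc(\Hc_0)$ is a $*$-isomorphism. Since $\Hc_0\ne\{0\}$, the algebra $\Kc(\Hc_0)$ contains a nonzero projection, hence so does $\Jc_0\subseteq C^*(G)$. Therefore $C^*(G)$ is not stably projectionless, and by Theorem~\ref{4n+2} it is not stably finite.
\end{proof}
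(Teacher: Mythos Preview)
Your proof is correct and follows essentially the same approach as the paper: convert the open coadjoint orbit into an open point of $\Prim(G)$ via Remark~\ref{P3_group}, use Proposition~\ref{P3} to obtain an ideal $\Jc_0\cong\Kc(\Hc_0)$ and hence a nonzero projection in $C^*(G)$, and then invoke Theorem~\ref{4n+2}. The paper's proof is nearly identical, differing only in cosmetic presentation.
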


\begin{proof}
For  an exponential Lie group $G$, an open coadjoint orbit corresponds to an open point $[\pi]\in \widehat{G}\simeq\Prim(G)$.  (See Remark~\ref{P3_group}.) Moreover, since $G$ is separable and type I, $\pi$ is square integrable and 
$\pi(C^*(G))$ contains the compact operators, by \cite[Cor. 1 and 2]{Gr80} and \cite[Prop.~2.3]{Ros78}.
Then by Proposition~\ref{P3} there is a minimal ideal $\Jc_0\subseteq C^*(G)$  such that $\Jc_0\simeq \Kc(\Hc_0)$ for a Hilbert space $\Hc_0$. 
Hence  there exists  $p \in \Jc_0$, $0\ne p= p^*= p^2$.  
The corollary now follows from Theorem~\ref{4n+2}.
\end{proof}

\subsection{Groups of the form $N \rtimes \RR$.}

We are going to see that the above result of Theorem~\ref{4n+2} fails to be true for groups of dimension of the form 
$4 k$, $k\in \NN$. 
To show this,  to give a simple necessary condition for stably finiteness, and to study a little bit further the case 
$\dim G\in 4 \ZZ +2$, we  restrict ourselves to the the groups of the form $G= N \rtimes \RR$, where  $N$ is  a nilpotent Lie group.
 
But first we consider the case of groups $G=N\rtimes \RR$ where $N$ is abelian, 
that is, the case of 
the  generalized $ax+b$-groups, where there is a quite clean  relation
 between quasi-compact open sets and finite approximation properties.

\subsubsection{The case of the generalized $ax+b$ groups}
Most of the following example is already known (see \cite{GKT92}, \cite[Th. 2.15]{BB18b}, and \cite[Ex.~4.8]{BB21}).

\begin{example}[generalized $ax+b$-groups]\label{ax+b} \normalfont 
Let $\Vc$ be a finite-dimensional real vector space, $D\in\End(\Vc)$, 
and $G_D:=\Vc\rtimes_{\alpha_D}\RR$ their corresponding semidirect product, called generalized $ax+b$-group. 
We recall from \cite[\S 2]{BB18b} the notation 
$\alpha_D\colon\RR\to\End(\Vc)$, 
$\alpha_D(t):=\ee^{tD}$, 
so the group operation in $G_D$ is given by $(v_1,t_1)\cdot(v_2,t_2)=(v_1+\ee^{t_1D}v_2,t_1+t_2)$ for all $v_1,v_2\in\Vc$ and $t_1, t_2\in\RR$. 
Then  we claim that the following assertions are equivalent: 
\begin{enumerate}[{\rm(i)}]
\item\label{ax+b_item1} Either $\mathrm{Re}\, z>0$ for every $z\in\sigma(D)$ or $\mathrm{Re}\, z<0$ for every $z\in\sigma(D)$. 
\item\label{ax+b_item2} The $C^*$-algebra $C^*(G_D)$ is not quasidiagonal. 
\item\label{ax+b_item3} The $C^*$-algebra $C^*(G_D)$ is not AF-embeddable. 
\item\label{ax+b_item4} There exists a nonempty quasi-compact open subset of $\widehat{G_D}$. 
\item\label{ax+b_item5} There exists a nonempty quasi-compact open subset of $\Prim(G_D)$. 
\item\label{ax+b_item6} The set $\widehat{G_D}\setminus\Hom(G_D,\TT)$ is a  nonempty quasi-compact open subset of $\widehat{G_D}$.
\item\label{ax+b_item8} There exist nonzero self-adjoint idempotent elements of $C^*(G_D)$. 
\item\label{ax+b_item1.5} The $C^*$-algebra $C^*(G_D)$ is not stably finite.
\end{enumerate}

\begin{proof}[Proof of claim]
Assertions \eqref{ax+b_item1} -- \eqref{ax+b_item8} are equivalent by  \cite[Ex.~4.8]{BB21}. 
A more general version of the implication  
\eqref{ax+b_item3}$\implies$\eqref{ax+b_item1} is given in Corollary~\ref{NC_cor1} below. 

The implication \eqref{ax+b_item1.5} $\implies$ \eqref{ax+b_item2} is clear.

It remains to prove  \eqref{ax+b_item1} $\implies$ \eqref{ax+b_item1.5}.
Assume that the condition in \eqref{ax+b_item1} holds.  
If 
$$\alpha^*\colon \Cc_0(\Vc^*) \times \RR \to \Cc_0(\Vc^*), \quad \alpha^*(f, t) = f\circ \ee^{t D^*}, $$   
 then $C^*(G_D) \simeq \Cc_0(\Vc^*) \rtimes_{\alpha^*} \RR$.

Let $\overline{\Vc^*}$ be the one-point compactification of $\Vc^*$ and extend $\alpha^*$ to 
$\Vc^*$ by  $\alpha^*_t(\infty)=\infty$ for every $t\in \RR$.
Then it follows from \cite[Prop.~2.14]{BB18b} and \cite[Prop.~4.6]{Pi99} 
that   the $C^*$-algebra $\Cc_0(\overline{\Vc^*})\rtimes \RR$ is not stably finite whenever 
\eqref{ax+b_item1} is true.

We now use the same argument as in \cite[Lemma~2.10]{BB18b}:
The split exact sequence $0\to \Cc_0(\Vc^*) \to \Cc(\overline{\Vc^*})\to\CC\1\to 0$ 
leads
 to the split exact sequence 
	$$0\to  \Cc_0(\Vc^*)  \rtimes \RR   \to  \Cc(\overline{\Vc^*})\rtimes \RR   \to C^*(\RR)\to 0.$$ 
	Then if we  assume that  $\Cc_0(\Vc^*)  \rtimes \RR$ is stably finite,  since
	the $C^*$-algebra $C^*(\RR)$ is stably finite, 
	it follows  by Corollary~\ref{Sp88_Lemma1.5}\eqref{Sp88_Lemma1.5_item3}
	that $\Cc(\overline{\Vc^*})\rtimes \RR$  is stably finite.
	This is a contradiction, hence  $\Cc_0(\Vc^*)  \rtimes \RR$ is not stably finite.
	\end{proof}
\end{example}

\subsubsection{Application of continuous fields of nilpotent Lie groups}

Let $(\ng, [\cdot, \cdot])$ be a nilpotent Lie algebra and let $\varphi\colon (0, 1]\to \GL(\ng)$, $h\mapsto\varphi_h$ be a continuous map. 
Assume  the following conditions hold:
\begin{enumerate}
\item $\varphi_1=\id$, 
\item the limit $[x, y]_0:=
\lim_{h\to 0 }[x,y]_h
$ exists for every $x, y\in \ng$,  
\end{enumerate}
where we use the bilinear map $[\cdot,\cdot]_h\colon\ng \times \ng \to \RR$ defined by 
\begin{equation}\label{defm}
[x, y]_h :=\varphi_h^{-1}([\varphi_h(x), \varphi_h(y)])
\end{equation}
for every $h\in(0,1]$.

\begin{remark}\label{defm-rem}
\normalfont
 \begin{enumerate}[\rm (i)]
 \item\label{defm-rem_i}  For all $h\in [0, 1]$, $[\cdot, \cdot]_h$ is a nilpotent Lie bracket on the vector space underlying $\ng$, and we denote by $\ast_h$ the corresponding Baker-Campbell-Hausdorff multiplication, and the corresponding connected and simply connected Lie group by
$N_h= (\ng, \ast_h)$.
 
 \item \label{defm-rem_ii} 
For every $h\in (0, 1]$, $\varphi_h \colon (\ng, [\cdot, \cdot]_h) \to (\ng, [\cdot, \cdot])$ is a Lie algebra isomorphism. 

\item\label{defm-rem_iii} For every $h\in (0, 1]$ we have
$$ \ad_h x= \varphi_h^{-1} \circ \ad (\varphi_h(x))  \circ \varphi_h$$
where  
$$ \begin{aligned} 
\ad\, x\colon \ng \to \ng, &\quad  (\ad\, x)(y)= [x, y] =[x, y]_1, \\
\ad_h x\colon \ng \to \ng, &\quad  (\ad_h x)(y)= [x, y]_h .
\end{aligned}
$$
\end{enumerate}
\end{remark}

We consider the map 
\begin{equation}\label{defm-mult}
m \colon [0, 1] \times \ng \times \ng \to \ng, \; \; m(h, x, y) =(h, x\ast_h  y).
\end{equation}
Then $m$ is continuous, by the assumptions above.  
Consider  the groupoid with equal source and target maps
$$ 
\begin{gathered} 
\Tc:=[0, 1] \times \ng  \stackrel{p}{\rightarrow} S:=[0, 1],  \; p(h, x)=x, \\
(h, x) \cdot (h, y) := (h, m_h(x, y)) = (h, x \ast_h y) \;  \text{for all } \;
(h, x), (h, y) \in \Tc_h:=p^{-1}(h).
\end{gathered}
 $$
Hence $p$ is a group bundle (depending on the map $\varphi$) with Haar system given by the Lebesgue measure. 
If follows by  \cite[Lemma~3.3]{BB18a} that $C^*(\Tc)$ is a $\Cc(S)$ -algebra that 
is $\Cc(S)$-linearly $*$-isomorphic   
to the algebra of sections of an upper semi-continuous $C^*$-bundle over~$ S$ 
whose fibre over any $s\in S$ is $C^*(\Tc_h)\simeq C^*(N_h)$.

\begin{theorem}\label{prop-cf4} 
For $\ng$ a nilpotent Lie algebra and
 $D\in \Der(\ng)$, define
the semi-direct product
$G :=N \rtimes_{\alpha_D} \RR$.
If
there exists $\epsilon \in \{-1, 1\}$ such that 
$\epsilon \Re\, z> 0$ for all $z\in \sigma(D)$, 
then 
$C^*(G)$ is  not stably finite.
\end{theorem}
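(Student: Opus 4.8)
The plan is to reduce the statement to the generalized $ax+b$-group case (Example~\ref{ax+b}), which has already been fully analyzed, by exhibiting a continuous field of Lie-group $C^*$-algebras connecting $C^*(N\rtimes_{\alpha_D}\RR)$ to $C^*(\Vc\rtimes_{\alpha_{D_0}}\RR)$ for a suitable abelian quotient. Concretely, I would use the deformation machinery set up just above the statement: choose a grading-type or contracting family $\varphi_h\in\GL(\ng)$ (for instance built from the lower central series of $\ng$, or from the real parts of the eigenvalues of $D$) so that the limit bracket $[\cdot,\cdot]_0$ is abelian, i.e.\ $N_0=\Vc$ is the underlying vector space of $\ng$. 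The key point is that the derivation $D$ must be carried along with the deformation: one has to check that $D_h:=\varphi_h^{-1}\circ D\circ\varphi_h$ is a derivation of $(\ng,[\cdot,\cdot]_h)$ for each $h$, that $D_h$ depends continuously on $h$, that $D_1=D$, and that $\sigma(D_h)=\sigma(D)$ is independent of $h$ (since $D_h$ is conjugate to $D$ for $h\in(0,1]$, and the spectrum is continuous, so $\sigma(D_0)=\sigma(D)$ as well). This produces a continuous field over $[0,1]$ of $C^*$-algebras with fibre $C^*(N_h\rtimes_{\alpha_{D_h}}\RR)$, trivial away from $0$, whose fibre at $0$ is $C^*(G_{D_0})$ with $G_{D_0}=\Vc\rtimes_{\alpha_{D_0}}\RR$ a generalized $ax+b$-group.

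The next step is to run the argument in the contrapositive direction dictated by Proposition~\ref{prop-cf2}: if every $C^*(N_h\rtimes_{\alpha_{D_h}}\RR)$ for $h\in(0,1]$ were stably finite, then $C^*(G_{D_0})$ would be stably finite. But the spectral hypothesis $\epsilon\,\Re z>0$ for all $z\in\sigma(D)=\sigma(D_0)$ is exactly condition \eqref{ax+b_item1} of Example~\ref{ax+b}, which is equivalent there to \eqref{ax+b_item1.5}, namely that $C^*(G_{D_0})$ is \emph{not} stably finite. Hence in particular the fibre at $h=1$, which is $C^*(G)$, cannot be stably finite --- more precisely, since the conclusion of Proposition~\ref{prop-cf2} is about the fibre at $0$, I would instead reason: were $C^*(G)=C^*(N\rtimes_{\alpha_D}\RR)$ stably finite, then using that stable finiteness propagates along the field (Lemma~\ref{lemma-cf1} together with the $K$-theoretic obstruction of Proposition~\ref{P1}, as packaged in Proposition~\ref{prop-cf2}) one would get $C^*(G_{D_0})$ stably finite, a contradiction.

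There is a subtlety I want to flag: Proposition~\ref{prop-cf2} as stated transports stable finiteness \emph{from} the generic fibres \emph{to} the special fibre at $0$, so to use it I need the field to be trivial on $(0,1]$ and I need stable finiteness of \emph{all} fibres over $(0,1]$. But the fibres over $(0,1]$ are all $*$-isomorphic to $C^*(G)$ itself (via $\varphi_h$ and its inflation to the semidirect product), so ``$C^*(G)$ stably finite'' gives stable finiteness of every generic fibre at once, Proposition~\ref{prop-cf2} then forces $C^*(G_{D_0})$ stably finite, and Example~\ref{ax+b} gives the contradiction. So the logical skeleton is: assume $C^*(G)$ stably finite $\Rightarrow$ all generic fibres stably finite $\Rightarrow$ (Prop.~\ref{prop-cf2}) special fibre $C^*(G_{D_0})$ stably finite $\Rightarrow$ (Example~\ref{ax+b}, since $\sigma(D_0)=\sigma(D)$ lies in an open half-plane) contradiction.

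The main obstacle is the first step: constructing the deformation $\varphi_h$ so that \emph{simultaneously} the brackets $[\cdot,\cdot]_h$ converge to the abelian bracket and the transported derivations $D_h=\varphi_h^{-1}D\varphi_h$ stay derivations with continuous dependence on $h$ and a well-defined, spectrum-preserving limit $D_0\in\End(\Vc)$. The natural candidate is the one-parameter contraction associated to a grading of $\ng$ that is compatible with $D$ (e.g.\ generated by a diagonalizable operator commuting with the semisimple part of $D$, or simply the lower-central-series grading, which every derivation respects up to lower-order terms); one then has to verify that the off-diagonal (lower-order) terms of $D$ scale to zero or at least to something still derivation-like in the limit. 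Proving that $D_h\in\Der(\ng,[\cdot,\cdot]_h)$ for all $h\in[0,1]$ and that $h\mapsto D_h$ extends continuously to $h=0$ is where the real work lies; once that is in place, the rest is a direct invocation of Proposition~\ref{prop-cf2}, Example~\ref{ax+b}, and the continuity of the spectrum under the conjugations $D_h=\varphi_h^{-1}D\varphi_h$.
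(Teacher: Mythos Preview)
Your overall strategy is exactly the paper's: deform the nilpotent bracket to the abelian one, carry $D$ along, obtain a continuous field trivial away from $0$, and combine Proposition~\ref{prop-cf2} with Example~\ref{ax+b}. The logical skeleton you lay out (assume $C^*(G)$ stably finite $\Rightarrow$ all generic fibres stably finite $\Rightarrow$ special fibre stably finite $\Rightarrow$ contradiction) is precisely the paper's argument.

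Where you diverge is in the execution of the deformation, and what you flag as the ``main obstacle'' is in fact a non-issue. The paper simply takes $\varphi_h=h\cdot\id_\ng$, so that $[\cdot,\cdot]_h=h[\cdot,\cdot]$ and the limit bracket at $h=0$ is abelian. The point you are missing is that \emph{$D$ itself} is a derivation of $(\ng,h[\cdot,\cdot])$ for every scalar $h$: the derivation identity is linear in the bracket. Hence there is no need to transport $D$ via $\varphi_h^{-1}D\varphi_h$, no need for gradings compatible with $D$, and no limiting argument for $D_0$: one keeps $D_h=D$ for all $h\in[0,1]$, the action $\alpha_D\colon\RR\to\Aut(N_h)$ is literally independent of $h$, the Haar measure on $G_h$ can be chosen independent of $h$, and Rieffel's criterion \cite{Ri89} gives the continuous field directly. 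Your more elaborate proposals (lower-central-series gradings, eigenvalue gradings) would require genuine work to make $D_h$ converge and remain a derivation, and are unnecessary.
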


\begin{proof}
Let $\Vc$ be the  underlying real vector space of the Lie algebra $\ng$, and 
denote 
$G_0:= \Vc\rtimes_{\alpha_D}\RR$.
If we proved that
 \begin{equation}\label{prop-cf4-1}
 C^*(G) \; \text{stably finite } \; \Rightarrow \; C^*(G_0) \; \text{stably finite,}
\end{equation}
then the statement follows from  Example~\ref{ax+b}.

Thus, it remains to prove \eqref{prop-cf4-1}.

For every $h \in [0, 1]$,  let $\varphi\colon [0, 1]\to \GL(\ng)$ be the map $\varphi_h(x) = h x$, 
for every $x\in \ng$. 
Consider  the deformed nilpotent Lie algebra 
$ \ng_h = (\Vc,   h [\cdot, \cdot]_\ng)$ and the corresponding nilpotent Lie group
$(N_ h , \ast_{h})$, as above. Then $N_1=N$  and $N_0= \Vc$. 
We define $G_h:= N_h \rtimes_{\alpha_D} \RR$, with the multiplication 
$ (x, t)\cdot_h  (y, s) = (x\ast_h  \ee^{t D}y, t+s)$.
Then $\Gc:= \bigsqcup\limits_{ h \in [0, 1]} G_h$ is a smooth bundle of Lie groups over $[0, 1]$. 
It follows by \cite[\S 3]{BB18a} that $C^*(\Gc)= \bigsqcup\limits_{h \in [0, 1]} C^*(G_h)$ is an upper semi-continuous bundle of $C^*$-algebras. 

On the other hand, the action $\alpha_D\colon \RR \to \Aut(N_{h})$ is independent of $h\in [0, 1]$, and thus we may choose the Haar measure on $G_h$ to be independent of $h$ as well.
Hence, by \cite[Def.~3.3, Thm.~3.5]{Ri89}, $(C^*(G_h))_{h \in [0, 1]}$ is a continuous field of $C^*$-algebras, which  is clearly  trivial away from $0$. 
The result then follows by Proposition~\ref{prop-cf2}.
\end{proof}

\subsubsection{Exponential Lie groups with exact symplectic Lie algebras and  nilradicals of codimension 1}

\begin{definition}\label{exactsympl}
\normalfont 
A solvable Lie algebra $\gg$ is said to be \emph{exact symplectic} 
if there is $\xi_0\in \gg^*$ with $\gg(\xi_0)=\{0\}$. 
Equivalently,  if $G$ is a solvable Lie group whose Lie algebra is~$\gg$, then the coadjoint orbit of $\xi_0$ is an open subset of $\gg^\ast$.  
\end{definition}

The reason for this terminology is that any Lie group $G$ as above admits a left-invariant exact symplectic form. (They are elsewhere called as Frobenius Lie groups).

\begin{lemma}\label{extra}
Let $\gg$ be a 
solvable Lie algebra  
with its  nilradical~$\ng$ with $\dim(\gg/\ng)=1$. 
Let $\zg$ be the centre of $\ng$. 
Let $G$  be a connected simply connected Lie group with its Lie algebra $\gg$ and $N\subseteq G$ be the connected subgroup corresponding to the subalgebra $\ng\subseteq\gg$.
Then the following assertions are equivalent: 
\begin{enumerate}[\rm(i)]
\item\label{extra-i} $\gg$ is exact symplectic.
\item\label{extra-i-2} $\gg$ is exact symplectic
and has  
two open coadjoint orbits.
\item\label{extra2-i} There is at least an open point in $\widehat{G}$. 
\item\label{extra2-i-2} There  are 
two open points in $\widehat{G}$. 
\item \label{extra-ii}
$[\gg, \zg]\ne \{0\}$,  $\dim \zg =1$ and the nilpotent Lie group $N=\exp \ng$ has generic flat coadjoint orbits
{\rm (}or equivalently, there is $\ell \in \ng^*$ such that $\ng(\ell) =\zg$.\rm{)}
\end{enumerate}

\end{lemma}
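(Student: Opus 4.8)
The plan is to prove the implications in the cyclic pattern \eqref{extra-ii} $\Rightarrow$ \eqref{extra-i-2} $\Rightarrow$ \eqref{extra-i} $\Rightarrow$ \eqref{extra-ii}, together with the translations between coadjoint orbits and open points. The latter translations are immediate from Remark~\ref{P3_group}: since $\gg$ is the Lie algebra of an exponential solvable Lie group (it has a 1-codimensional nilradical, so $\gg = \ng \rtimes \RR D$ for a derivation $D$, hence is at worst exponential once we know the orbit picture; in any case the Kirillov--Bernat correspondence applies), open coadjoint orbits correspond bijectively to open points in $\widehat{G}$. This gives \eqref{extra-i}$\iff$\eqref{extra2-i} and \eqref{extra-i-2}$\iff$\eqref{extra2-i-2} for free, so it suffices to work with coadjoint orbits throughout. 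The trivial implications \eqref{extra-i-2}$\Rightarrow$\eqref{extra-i} and \eqref{extra2-i-2}$\Rightarrow$\eqref{extra2-i} need no comment.

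First I would set up coordinates. Write $\gg = \ng \oplus \RR X$ where $\ad X|_\ng = D \in \Der(\ng)$; since $\ng$ is the nilradical, $D$ acts on $\ng$ with at least one nonzero eigenvalue on $\ng/[\ng,\ng]$ (otherwise $\gg$ would be nilpotent and $\ng$ strictly larger). For $\xi \in \gg^*$, the radical $\gg(\xi)$ of the form $B_\xi(Y,Z) = \langle \xi, [Y,Z]\rangle$ has even codimension $\dim\gg - \dim\gg(\xi)$, so $\gg$ is exact symplectic only if $\dim\gg$ is even, i.e. $\dim\ng$ is odd. The key computation is to relate $\gg(\xi)$ to $\ng(\xi|_\ng)$ and to the action of $D$: for generic $\xi$, either $X \notin \gg(\xi) + \ng$ and then $\gg(\xi) \subseteq \ng$ equals $\ng(\xi|_\ng)$ intersected with $\ker(\text{ad-type condition involving }D)$, or the analysis produces one extra dimension. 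I would show that $\gg(\xi_0) = \{0\}$ forces, first, that $\ng(\ell_0)$ is as small as possible for $\ell_0 = \xi_0|_\ng$, and second, that $D$ restricted to $\ng(\ell_0)$ together with the pairing against $\xi_0$ has no kernel. Since $\ng(\ell)$ always contains the centre $\zg$ and is $D$-invariant modulo lower terms, the condition $\gg(\xi_0)=\{0\}$ pushes $\ng(\ell_0)$ down to exactly $\zg$ for generic $\ell$, and it forces $\dim\zg = 1$ (if $\dim\zg \ge 2$, then $D$ acting on the $\ge 2$-dimensional space $\zg$ has a nonzero invariant subspace on which one cannot kill $\gg(\xi_0)$ by a single direction $X$ — more precisely $\gg(\xi_0) \supseteq \ker(D|_\zg \text{ paired with }\xi_0)$ which is nonzero when $\dim\zg\ge2$). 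This also yields $[\gg,\zg] = D\zg \ne \{0\}$, since $D\zg = \{0\}$ would again put $\zg \subseteq \gg(\xi_0)$. That $N$ has generic flat coadjoint orbits is precisely the statement $\exists \ell \in \ng^*$ with $\ng(\ell) = \zg$, which is what "push $\ng(\ell_0)$ down to $\zg$" delivers; the equivalence of the two formulations of \eqref{extra-ii} is standard (flat orbits $=$ orbits of maximal dimension $\dim\ng - \dim\zg$).

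For the converse \eqref{extra-ii}$\Rightarrow$\eqref{extra-i-2}, assuming $\dim\zg=1$, $D\zg \ne 0$, and some $\ell \in \ng^*$ with $\ng(\ell) = \zg$: I would pick such an $\ell$ with $\ell|_\zg \ne 0$ (possible by the flat-orbit hypothesis and by rescaling within the open set of such $\ell$), extend it to $\xi \in \gg^*$ by choosing $\langle\xi, X\rangle$ freely, and compute $\gg(\xi)$. Any $Y = Z + tX \in \gg(\xi)$ with $Z \in \ng$ satisfies $\langle\xi, [Z+tX, \ng]\rangle = 0$ and $\langle\xi,[Z+tX, X]\rangle = 0$; the first, tested against $\ng$, gives $\langle\ell, [Z,\cdot]\rangle = -t\langle\ell, D(\cdot)\rangle$, i.e. $Z \in \ng(\ell)$ modulo the image condition — and here one uses that $D$ maps $\ng$ onto a complement making $\langle\ell, D(\cdot)\rangle$ not in the span $\langle\ell,[\ng,\cdot]\rangle$ for the relevant generic $\ell$, forcing $t = 0$ and $Z \in \ng(\ell) = \zg$; then the second equation $\langle\xi, [Z, X]\rangle = -\langle\ell, DZ\rangle = 0$ forces $Z \in \zg \cap \ker(D|_\zg) = \{0\}$ since $D\zg \ne 0$ and $\dim\zg = 1$. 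Hence $\gg(\xi) = \{0\}$, so $\gg$ is exact symplectic. For the "two open orbits" part: the set of $\xi$ with $\gg(\xi) = \{0\}$ is open and its image is a union of open coadjoint orbits in $\gg^*$; the extra structure (the single eigendirection of $D^*$ on $\zg^* \cong \RR$ with a nonzero real eigenvalue $\lambda$) splits $\gg^*$ into the half-spaces $\langle\xi,\zg\rangle > 0$ and $< 0$ (the sign of the $\zg^*$-component is a coadjoint invariant up to the $\RR$-flow which scales it by $e^{t\lambda}$ but preserves sign), and one checks an open orbit meets each half-space, giving at least two.

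The main obstacle I anticipate is the bookkeeping in the linear-algebra computation of $\gg(\xi)$ in terms of $\ng(\ell)$ and $D$ — specifically, controlling the interaction between the "$X$-direction" and the radical of $B_\ell$ on $\ng$, i.e. showing that genericity of $\xi$ lets one simultaneously arrange $\ng(\ell) = \zg$ \emph{and} that $\langle\ell, D(\cdot)\rangle$ is transverse to $\langle\ell, [\ng, \cdot]\rangle$. This transversality is what makes the "one extra dimension from $X$" collapse, and it is where the hypothesis that $\ng$ is the \emph{nilradical} (so $D$ has a nonzero eigenvalue on $\ng/[\ng,\ng]$) really enters. I would isolate this as a preliminary sublemma about derivations of nilpotent Lie algebras before assembling the equivalences.
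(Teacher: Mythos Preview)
Your core Lie-algebraic arguments for \eqref{extra-i}$\Leftrightarrow$\eqref{extra-ii} are sound and take a more computational route than the paper. For \eqref{extra-i}$\Rightarrow$\eqref{extra-ii} you observe (in effect) that for $Z\in\ng(\ell_0)$ one has $Z\in\gg(\xi_0)$ iff $\langle\xi_0,DZ\rangle=0$, so $\ng(\ell_0)\cap\ker(\xi_0\circ D\vert_{\ng(\ell_0)})\subseteq\gg(\xi_0)=\{0\}$, forcing $\dim\ng(\ell_0)\le 1$ and hence $\ng(\ell_0)=\zg$, $\dim\zg=1$, $D\zg\ne0$. The paper instead works with orthogonal complements relative to the symplectic form $B_{\xi_0}$, showing $\ng^{\perp_B}\subseteq\ng$ and $\dim\ng^{\perp_B}=1$ by a dimension count. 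For \eqref{extra-ii}$\Rightarrow$\eqref{extra-i} your direct computation of $\gg(\xi)$ works, and the transversality you worry about is immediate: the span $\{\langle\ell,[Y,\cdot]\rangle:Y\in\ng\}$ equals $\zg^\perp\subset\ng^*$ (it has the right dimension and kills $\zg$), while $\langle\ell,D\cdot\rangle$ does not vanish on $\zg$ precisely because $D\vert_\zg\ne0$ and $\ell\vert_\zg\ne0$. The nilradical hypothesis plays no role here; drop the planned sublemma about eigenvalues on $\ng/[\ng,\ng]$. The paper instead argues by contradiction using parity: if $\gg(\xi)\ne\{0\}$ then $\dim\gg(\xi)\ge2$, forcing $\zg\subseteq\gg(\xi)$, which contradicts $\langle\xi,[X,\zg]\rangle\ne0$.

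There are, however, two genuine gaps. First, your claim that $G$ is automatically exponential because $\ng$ has codimension~1 is false (the oscillator group is a counterexample), so you cannot invoke Remark~\ref{P3_group} to get \eqref{extra-i}$\Leftrightarrow$\eqref{extra2-i} and \eqref{extra-i-2}$\Leftrightarrow$\eqref{extra2-i-2} in the generality of the lemma. The paper avoids this by citing \cite[Thm.~4.5]{KT96} for \eqref{extra2-i}$\Leftrightarrow$\eqref{extra2-i-2}$\Leftrightarrow$\eqref{extra-ii} and proving \eqref{extra-i}$\Leftrightarrow$\eqref{extra-ii} purely algebraically. Second, for \eqref{extra-ii}$\Rightarrow$\eqref{extra-i-2} you only obtain \emph{at least} two open orbits, whereas the statement asserts exactly two. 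The paper closes this by proving the stronger fact that $\gg(\xi)=\{0\}$ for \emph{every} $\xi$ with $\ng(\xi\vert_\ng)=\zg$ (equivalently $\xi\vert_\zg\ne0$), so that $\gg^*_{\rm gen}=\gg^*\setminus\zg^\perp$ is the complement of a hyperplane; its two connected components are then the two open orbits. Your computation already gives this stronger fact once you note it applies to all such $\xi$, not just one.
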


\begin{proof}
The equivalences  \eqref{extra2-i} $ \iff$ \eqref{extra2-i-2}  $\iff$ \eqref{extra-ii}  follows  from \cite[Thm.~4.5]{KT96}.

The implication  \eqref{extra-i-2} $\Rightarrow$ \eqref{extra-i} is trivial. 
It remains to prove  \eqref{extra-i} $\Rightarrow$   \eqref{extra-ii} $\Rightarrow$ \eqref{extra-i-2}. 
 For every $\xi\in\gg^*$ we denote by $\Oc_\xi\subseteq\gg^*$ its corresponding coadjoint orbit. 

\eqref{extra-ii} $\Rightarrow$\eqref{extra-i-2}: 
We prove by contradiction the following assertion: 
\begin{equation}\label{extra_proof_eq0}
\text{if }\xi\in\gg^*\text{ and }\ng(\xi\vert_\ng)=\zg\text{ then }
\gg(\xi)=\{0\}. 
\end{equation}
Hence let us assume $\gg(\xi)\ne\{0\}$ and let us denote $\ell:=\xi\vert_\ng\in\ng^*$. 
The hypothesis~\eqref{extra-ii} implies that that $\dim\ng$ is an odd integer and $\dim\gg$ is an even integer. 
Since $\dim\Oc_\xi=\dim\gg/\gg(\xi)$ and this is an even integer, it follows that $\dim\gg(\xi)$ is also an even integer, hence $\dim\gg(\xi)\ge2$. 
Then $\dim\ng+\dim\gg(\xi)>\dim\gg$, hence $\ng\cap\gg(\xi)\ne\{0\}$. 
On the other hand 
$$\ng\cap\gg(\xi)=\{X\in\ng\mid\langle\xi,[X,\gg]\rangle=\{0\}\}
\subseteq\ng(\xi\vert_\ng)=\ng(\ell)=\zg$$
hence, since $\dim\zg=1$, we obtain $\ng\cap\gg(\xi)=\zg$. 
In particular $\zg\subseteq\gg(\xi)$. 

Now, selecting any $Y\in\gg\setminus\ng$,  
the centre $\zg\subseteq\ng$ is invariant to the derivation $(\ad_\gg Y)\vert_\ng\in\Der(\ng)$. 
It then follows that $[Y,X_0]\in\zg$ for any $X_0\in\zg\setminus\{0\}$. 
Since $\gg=\ng\dotplus\RR Y$ and $[\ng,\zg]=\{0\}$, the hypothesis $[\gg,\zg]\ne\{0\}$ 
implies $[Y,X_0]\ne0$, hence there exists $a\in\RR^\times$ with $[Y,X_0]=aX_0$. 
Thus $\langle\xi,[Y,X_0]\rangle=\langle\xi,aX_0\rangle
=a\langle\ell,X_0\rangle\ne0$, 
using the assumption $\ng(\ell)=\zg=\RR X_0$. 
This shows that $X_0\not\in\gg(\xi)$, which is a contradiction 
with the above conclusion $\zg\subseteq\gg(\xi)$. 
This completes the proof of~\eqref{extra_proof_eq0}, which shows that $\gg$ has open coadjoint orbits. 

In order to prove that there are 
two open coadjoint orbits, 
we consider the set of generic points 
$$\gg^*_{\rm gen}:=\{\xi\in\gg^*\mid\gg(\xi)=\{0\}\}.$$ 
The set $\gg^*_{\rm gen}$ is the union of the open coadjoint orbits of $\gg$, which are connected and mutually disjoint, 
hence are also relatively closed in  $\gg^*_{\rm gen}$. 
Thus the open coadjoint orbits are the connected components of $\gg^*_{\rm gen}$. 
On the other hand, by \eqref{extra_proof_eq0}, we have 
$$\gg^*_{\rm gen}=\{\xi\in\gg^*\mid \xi\vert_{\zg}\in\zg^*\setminus\{0\}\}=\gg^*\setminus\zg^\perp$$
since for any $\ell\in\ng^*$ the equality $\ng(\ell)=\zg$ is equivalent to $\ell\vert_\zg\in\zg^*\setminus\{0\}$, 
by the hypothesis on~$\ng$ and $\zg$. 
Thus $\gg^*_{\rm gen}$ is the complement of a hyperplane in $\gg^*$, 
and then $\gg^*_{\rm gen}$ is the union of two open half-spaces in 
 $\gg^*$. 
The above remarks then show that these open half-spaces are just 
 the open coadjoint orbits of $\gg^*$, hence there are 
 two such orbits.

\eqref{extra-i}$\Rightarrow$\eqref{extra-ii}: 
Fix $\xi_0 \in \gg^*$ with  $\gg(\xi_0)=\{0\}$, which exists by hypothesis. 
Assume that
$[\gg, \zg] =\{ 0\}$; then $\zg \subseteq \gg(\xi_0)$, and  $\dim \zg \ge 1$ since $\ng$ is nilpotent.  
Thus $\dim \gg(\xi_0)\ge 1$ and 
this is a contradiction with \eqref{extra-i}, therefore
$[\gg, \zg]\ne \{0\}$. 

For $\xi_0\in \gg^*$ as above, define the bilinear functional  
$$B_{\xi_0} \colon \gg\times \gg\to \RR, 
\quad B_{\xi_0}(X, Y) = \langle \xi_0, [X, Y]\rangle.$$ 
We have that $\zg \subseteq \ng^{\perp_{B_{\xi_0}}}$, therefore $\dim \ng^{\perp_{B_{\xi_0}}}\ge 1$.
If there exists $Y_0\in \ng^{\perp_{B_{\xi_0}}}\setminus \ng$ then, since $\dim(\gg/\ng)=1$, 
$\gg =\ng \dotplus \RR Y_0$. 
On the other hand, $Y_0 \perp_{B_{\xi_0}} \ng$, hence $Y_0\perp_{B_{\xi_0}}\zg$, while 
$\ng\perp_{B_{\xi_0}}\zg$;  it follows that $\gg \perp_{B_{\xi_0}}\zg$. 
Since $\zg \ne \{0\}$ this is a contradiction with the fact that $B_{\xi_0}$ is symplectic. 
Thus $\ng^{\perp_{B_{\xi_0}}}\subseteq \ng$.
For arbitrary $X\in \gg \setminus \ng$, $\dim (X^{\perp_{B_{\xi_0}}}) =\dim \gg -1$.
Hence 
if $\dim(\ng^{\perp_{B_{\xi_0}}}) \ge 2$, then $\{0\}\ne X^{\perp_{B_{\xi_0}}}\cap \ng^{\perp_{B_{\xi_0}}}
=(\RR X+\ng)^{\perp_{B_{\xi_0}}}
=\gg^{\perp_{B_{\xi_0}}}$, which is again a contradiction.
It follows that $\dim(\ng^{\perp_{B_{\xi_0}}})=1$, hence
 $\ng (\xi_0\vert_{\ng}) =\zg$ and $\dim \zg =1$, and we are done. 
 \end{proof}

The Lie groups $G$ in Definition~\ref{exactsympl} are not necessarily exponential groups. 
See for instance~\cite[\S 6]{BB18a}. 
In the special case of exponential Lie groups we obtain the following result. 

\begin{corollary}\label{extra-cor}
Let $G$ be an exponential Lie group with its Lie algebra $\gg$ and nilradical $\ng$ with $\dim (\gg/\ng) =1$.
Then the following assertions are equivalent. 
\begin{enumerate}[\rm(i)]
\item \label{cor_extra2-0} There is an open point in $\Prim(G)$.
\item\label{cor_extra2-i} $\gg$ is exact symplectic.
\item \label{cor_extra-ii}
\begin{itemize}
\item{}  There is a continuous action $\alpha \colon \RR \to \Aut N$ such that $G = N \rtimes_\alpha \RR$ 
and $\alpha$ acts non-trivially on the centre $Z$ of $N$.
\item{} The nilpotent Lie group $N$ has generic flat coadjoint orbits and its centre is 1-dimensional.
\end{itemize}
\end{enumerate}
\end{corollary}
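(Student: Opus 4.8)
The strategy is to reduce Corollary~\ref{extra-cor} to Lemma~\ref{extra} by showing that, for exponential Lie groups, the homeomorphism between the unitary dual and the space of coadjoint orbits turns ``open point of $\widehat G$'' into ``open coadjoint orbit'', and to add the semidirect product description as a reformulation of condition \eqref{extra-ii} of Lemma~\ref{extra}. I would first record that since $G$ is exponential, it is type~I and $\widehat G \simeq \Prim(G)$ canonically; moreover, by Remark~\ref{P3_group}, an open point in $\Prim(G)$ corresponds exactly to a square-integrable irreducible representation, which corresponds to an open coadjoint orbit in $\gg^*$. This gives the equivalence \eqref{cor_extra2-0}$\iff$\eqref{extra2-i} in the notation of Lemma~\ref{extra}. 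Since $G$ is in particular a connected simply connected solvable Lie group, Lemma~\ref{extra} applies verbatim, yielding \eqref{cor_extra2-0}$\iff$\eqref{cor_extra2-i}$\iff$\eqref{extra-ii of Lemma~\ref{extra}}.

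Then I would verify that condition \eqref{cor_extra-ii} of the corollary is just a group-level restatement of condition \eqref{extra-ii} of Lemma~\ref{extra}. Pick any complementary line $\RR Y \subseteq \gg$ to $\ng$; since $\ng$ is an ideal, $\ad_\gg Y$ restricts to a derivation $D := (\ad_\gg Y)\vert_\ng \in \Der(\ng)$, and the subgroup $\RR = \exp(\RR Y)$ acts on $N = \exp\ng$ by $\alpha := \alpha_D$, giving a Lie group isomorphism $G \simeq N \rtimes_\alpha \RR$ (this is the standard splitting already invoked in the proof of Corollary~\ref{signs_solvable}). The condition ``$\alpha$ acts non-trivially on the centre $Z$ of $N$'' is precisely ``$D\vert_\zg \ne 0$'', i.e. $[\gg,\zg]\ne\{0\}$ (using $[\ng,\zg]=\{0\}$ and $\gg = \ng \dotplus \RR Y$). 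The remaining two requirements — $N$ has generic flat coadjoint orbits and $\dim Z = 1$ — are literally the last clause of Lemma~\ref{extra}\eqref{extra-ii}, recalling that $Z = \exp\zg$ and that generic flatness of the coadjoint orbits of a nilpotent group $N$ is equivalent to the existence of $\ell\in\ng^*$ with $\ng(\ell)=\zg$.

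Assembling these observations: \eqref{cor_extra2-0}$\iff$\eqref{cor_extra2-i} and \eqref{cor_extra2-i}$\iff$\eqref{cor_extra-ii} both follow from Lemma~\ref{extra} once the dictionary between the representation-theoretic, orbit-theoretic, and structural conditions is in place. I do not expect any serious obstacle here; the only point requiring a little care is checking that the choice of complement $\RR Y$ does not matter and that ``acts non-trivially on $Z$'' is independent of that choice — but this is immediate since $[\gg,\zg]\ne\{0\}$ is an invariant condition on $\gg$, and any two complements of $\ng$ differ by an element of $\ng$, which centralizes $\zg$. The mild subtlety worth flagging in the write-up is that exponential Lie groups need not be $N\rtimes_\alpha\RR$ for $\alpha$ coming from a \emph{subgroup} in general, but here the $1$-codimensionality of $\ng$ forces exactly this structure, which is why the corollary is clean precisely in this case.
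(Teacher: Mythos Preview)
Your proposal is correct and follows essentially the same approach as the paper, which simply states that the corollary ``is a consequence of Lemma~\ref{extra} and Remark~\ref{P3_group}.'' You have spelled out in more detail exactly how these two ingredients combine---in particular, the dictionary between condition~\eqref{cor_extra-ii} of the corollary and condition~\eqref{extra-ii} of Lemma~\ref{extra} via the semidirect product structure---but the underlying argument is the same.
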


\begin{proof}
The assertion is a consequence of Lemma~\ref{extra} and Remark~\ref{P3_group}.
\end{proof}

\subsubsection{More on  exact symplectic groups $G= N\rtimes \RR$}
Let $N$ be a nilpotent Lie group and $Z$ the centre of $N$. 
We assume $\dim Z=1$ and we define the ideal 
$$ \Ic:= \bigcap_{\sigma \in \widehat{N/Z}}  \Ker_{C^*(N)}(\sigma)\subseteq C^*(N). $$
If $\Psi \colon C^*(N) \to C^{*}(N/Z)$ is the surjective morphism given by 
$$ (\Psi(f))(xZ) = \int_Z f(xz) \de z,$$
then, by  \cite[Prop.~8.C.8]{BkHa20}, we have the short exact sequence 
\begin{equation}\label{cf-0}
 0 \longrightarrow  \Ic \longrightarrow C^*(N) \stackrel{\Psi}{\longrightarrow}  C^*(N/Z) \longrightarrow  0.
 \end{equation}

Now assume that the group $N$ has  generic flat coadjoint orbits, and denote $\dim N = 2d +1$, $d \in \NN$.
Let $\alpha \colon \RR\to \Aut(N)$  be a continuous action that acts non-trivially on $Z$, that is, 
$\alpha_t\vert_Z\ne \id_Z$ for some, hence all, $t \in \RR\setminus \{0\}$. 
Then there is $\tau_0 \in \RR\setminus \{0\}$ such that $\de \alpha_t\vert_{\zg} = \ee^{\tau_0 t} \id_\zg$.

Fix $X_0\in \zg \setminus \{0\}$ and let $\pi_1\colon N \to \Bc(L^2(\RR^d))$ be the unitary  irreducible representation such that 
$$
\pi_1(\exp_N (s X_0))=\ee^{\ie s} \id_{L^2(\RR^d)}. 
$$
Then 
\begin{equation}\label{cf-4}
(\pi_1 \circ \alpha_t)(\exp_N(sX_0))= \pi_1(\exp_N(s\ee^{\tau_0t}X_0))= 
\ee^{\ie s\ee^{\tau_0t}} \id_{L^2(\RR^d)}.
\end{equation}

Let $C\colon L^2(\RR^d) \to L^2(\RR^d)$ be the usual complex  conjugation $v \mapsto \overline{v(\cdot)}$. 
For every $t\in \RR$ define 
\begin{equation}\label{cf-4.5} 
\pi_t\colon N \to \Bc(L^2(\RR^d)),
 \quad \pi_t:= 
 \begin{cases} \pi_1\circ \alpha_{\log t} & \text{if } t >0, \\
 C \pi_{-t} C & \text{if } t < 0.
 \end{cases}
 \end{equation}
 Then \eqref{cf-5} and \eqref{cf-4} give 
 $$
 \pi_t (\exp_N (sX_0))= \begin{cases} \ee^{\ie  t^{\tau_0} s} \id_{L^2(\RR^d)}  & \text{if } t >0, \\
   \ee^{-\ie  \vert t\vert^{\tau_0} s} \id_{L^2(\RR^d)}  & \text{if } t < 0.
 \end{cases}
$$
Hence the map 
$$ \RR^{\times} \to \what{N}\setminus \what{N/Z}, \quad t \mapsto [\pi_t]$$
is a homeomorphism. 

We have thus obtained that there is a $*$-isomorphism 
\begin{equation}\label{cf-5}
 \Phi\colon \Ic \to \Cc_0(\RR^{\times}, \Kc(L^2(\RR^d))),  \;\;
 a\mapsto (s\mapsto \pi_s(a)=\Phi(a)(s))
 \end{equation}
 that satisfies 
 \begin{equation}\label{cf-6}
 ( \Phi(\overline{a}))(t)= \pi_t(\overline{a}) = C\overline{\pi_t}(a) C = C\pi_{-t}(a)  C,\; \text{
 for all } t\in \RR^{\times}.
 \end{equation}
(Compare with the proof of Lemma~\ref{rcrossed}.)

Denote 
$$ \Ic_{\pm}:=\Ic \cap \bigcap\limits_{t>0} \Ker_{C^*(N)} \pi_{\mp t}, $$
which are ideals of $\Ic$.
Then by \eqref{cf-5}, \eqref{cf-6} we get 
\begin{gather}
\Ic = \Ic_{+} \dotplus \Ic_{-}, \label{cf-7}\\
\overline{\Ic_{+}}=\Ic_{-},  \label{cf-8}\\
\alpha_t(\Ic_{\pm})=\Ic_{\pm}, \quad \text{for all } \, t \in\RR.\nonumber
\end{gather}

\begin{lemma}\label{cf-lemma6}
With the notation and in the conditions above, 
$$ \Ic_{+}\rtimes_{\alpha} \RR \simeq \Kc(L^2(\RR^d))\otimes \Kc(L^2(\RR^d)).$$
\end{lemma}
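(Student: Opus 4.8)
The plan is to compute the crossed product $\Ic_+\rtimes_\alpha\RR$ directly, using the concrete description of $\Ic_+$ coming from \eqref{cf-5} and \eqref{cf-4.5}. First I would record that, by \eqref{cf-5} together with the definition $\Ic_+ = \Ic\cap\bigcap_{t>0}\Ker\pi_{-t}$, the $*$-isomorphism $\Phi$ restricts to a $*$-isomorphism
$$\Phi\vert_{\Ic_+}\colon \Ic_+\xrightarrow{\ \sim\ }\Cc_0\bigl((0,\infty),\Kc(L^2(\RR^d))\bigr),$$
since an element $a\in\Ic$ lies in $\Ic_+$ exactly when $\pi_{-t}(a)=0$ for all $t>0$, i.e.\ when the section $\Phi(a)$ is supported in $(0,\infty)\subseteq\RR^\times$. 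Under this identification the action $\alpha$ becomes, for $a\in\Ic_+$ and $t\in\RR$, the action obtained from $\pi_s\circ\alpha_t = \pi_1\circ\alpha_{\log s + t} = \pi_{s\ee^t}$ (for $s>0$); that is, $\alpha_t$ is transported to the action $\beta_t$ on $\Cc_0((0,\infty),\Kc(L^2(\RR^d)))$ given by translating the variable $s\in(0,\infty)$ multiplicatively by $\ee^t$ and leaving the $\Kc(L^2(\RR^d))$-factor untouched. Changing variable $s = \ee^u$, $u\in\RR$, turns $(0,\infty)$ with multiplicative translation into $\RR$ with ordinary translation, so
$$\Ic_+\rtimes_\alpha\RR \;\simeq\; \Cc_0(\RR,\Kc(L^2(\RR^d)))\rtimes_{\rm lt}\RR \;\simeq\; \bigl(\Cc_0(\RR)\rtimes_{\rm lt}\RR\bigr)\otimes\Kc(L^2(\RR^d)),$$
where in the last step the $\Kc(L^2(\RR^d))$-factor is inert (the action is trivial on it), using \cite[Lemma 2.75]{Wi07} as in the proof of Proposition~\ref{tsigns}.

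Next I would invoke the Stone--von~Neumann-type isomorphism $\susp\rtimes_\tau\RR\simeq\Kc(L^2(\RR))$ recalled in \eqref{SvN_eq1} of Remark~\ref{SvN}, with $\susp=\Cc_0(\RR)$ and $\tau$ the translation action: this gives $\Cc_0(\RR)\rtimes_{\rm lt}\RR\simeq\Kc(L^2(\RR))$. Combining,
$$\Ic_+\rtimes_\alpha\RR\;\simeq\;\Kc(L^2(\RR))\otimes\Kc(L^2(\RR^d)),$$
which is the asserted isomorphism (note $L^2(\RR)$ and $L^2(\RR^d)$ are separable infinite-dimensional, so either copy of $\Kc$ can be written over $L^2(\RR^d)$; but the statement is fine as is).

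The routine points to nail down are: that $\Phi$ is genuinely $\alpha$-equivariant for the translation action in the $s$-variable, which follows from $\pi_s\circ\alpha_t=\pi_{s\ee^t}$ on the nose (not merely up to unitary equivalence) for $s>0$, because $\alpha$ is built into the very definition \eqref{cf-4.5} of $\pi_s$; that the multiplicative-to-additive change of variables is an equivariant $*$-isomorphism of the coefficient algebras intertwining the two actions; and the standard fact that tensoring an action by the trivial action on $\Kc$ commutes with forming the crossed product. The main obstacle --- really the only non-formal ingredient --- is establishing the exact equivariance of $\Phi\vert_{\Ic_+}$, i.e.\ that conjugating $\Ic_+$ by $\alpha_t$ corresponds under $\Phi$ to precomposing sections with multiplication by $\ee^{-t}$ on $(0,\infty)$; once that is in hand everything else is bookkeeping plus the two already-available isomorphisms (Stone--von~Neumann and \cite[Lemma 2.75]{Wi07}).
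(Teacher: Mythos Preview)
Your proposal is correct and follows essentially the same approach as the paper: identify $\Ic_+$ with $\Cc_0((0,\infty),\Kc(L^2(\RR^d)))$ via $\Phi$, verify the equivariance $\Phi(\alpha_t(a))(s)=\Phi(a)(s\ee^t)$, and then reduce to the Stone--von~Neumann isomorphism for the translation/dilation action. The only cosmetic difference is that the paper works directly with the multiplicative group $\RR_+^\times$ (writing $\Cc_0(\RR_+^\times)\rtimes_\rho\RR_+^\times\simeq\Kc(L^2(\RR_+^\times))$), whereas you pass to the additive picture via the logarithm first; these are equivalent. One small slip: in your last paragraph you write ``multiplication by $\ee^{-t}$'' where the computation gives $\ee^{t}$, but this is harmless for the conclusion.
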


\begin{proof}
The restriction of $\Phi$ to $\Ic_{+}$ gives a $*$-isomorphism 
$$\Phi\vert_{\Ic_+}\colon \Ic_{+} \, \widetilde{\longrightarrow}\,  \Cc_0(\RR^\times_+, \Kc(L^2(\RR^d))).$$ 
By \eqref{cf-6} and \eqref{cf-4.5}, for every $t\in \RR$ and $s>0$ we have
\begin{align*}
\Phi(\alpha_t(a))(s)  & = \pi_s(\alpha_t(a)) 
  =(\pi_1\circ\alpha_{\log s})(\alpha_t(a))
  =\pi_1(\alpha_{\log (s \ee^t)}(a)) 
= \pi_{s\ee^t}(a) \\
&=\Phi(a)(s\ee^t).
\end{align*}
On the other hand, if we define
$ \rho\colon \RR_+^\times \times \Cc_0(\RR^\times_+) \to \Cc_0(\RR^\times_+)$, 
$(\rho_t (f))(s)= f(st)$, 
then
\begin{equation}\label{cf-11}
\Cc_0(\RR^\times_+) \rtimes_\rho \RR^\times_+ \simeq \Kc(L^2(\RR^\times_+)).
\end{equation}
Then the assertion in the statement follows from the commutative diagram
$$ \xymatrix{
\RR \times \Ic_+ \ar[r]^{\alpha} \ar[d]_{\exp \times \Phi} & \Ic_+\ar[d]^{\Phi}\\
\RR_+^\times  \times (\Cc_0(\RR^\times_+)\otimes \Kc(L^2(\RR^d)))\ar[r]^{\;\;  \rho\otimes \id_\Kc}
 & \Cc_0(\RR^\times_+)\otimes \Kc(L^2(\RR^d))
}
$$
and \eqref{cf-11}. 
\end{proof}

Denote 
\begin{equation}\label{cf-Jc}
\Jc: = \Ic_+\rtimes_\alpha \RR.\end{equation}
Then
$\Jc$ is an elementary $C^*$-algebra, by Lemma~\ref{cf-lemma6}. 

\begin{theorem}\label{cf-prop7}
Let $G$ is a solvable Lie group  with exact symplectic Lie algebra $\gg$  of dimension $2d+2$. 
Assume that the nilradical $N$  of $G$  is of codimension 1, and 
let $Z$ be the centre of $N$.
\begin{enumerate}[\rm (i)]
\item\label{cf-prop7_i} There is an ideal $\Jc$ of $C^*(G)$ 
 such that 
$\Jc \simeq \Kc(L^2(\RR^d)) \otimes \Kc(L^2(\RR^d))$ and
there is  the following short exact sequence
\begin{equation}\label{cf-prop7-eq1} 
0\longrightarrow \Jc \dotplus \overline{\Jc} \stackrel{\iota}{\longrightarrow} C^*(G ) \stackrel{\psi}{\longrightarrow} 
C^*(G/Z) \rightarrow 0
\end{equation}
\item\label{cf-prop7_ii} $K_0(\Jc\dotplus \overline{\Jc})^+ \cap \Ker K_0(\iota) =\{0\}$ if and only if $\dim(G) \in 4 \ZZ$.
\end{enumerate}
\end{theorem}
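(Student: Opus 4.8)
The plan is to build the exact sequence \eqref{cf-prop7-eq1} from the pieces assembled just before the statement, and then compute the relevant $K_0$ groups and maps using the ``real''-structure results of Section~\ref{section1}. For part~\eqref{cf-prop7_i}: by Corollary~\ref{extra-cor}, since $\gg$ is exact symplectic with $1$-codimensional nilradical, we may write $G = N\rtimes_\alpha\RR$ with $N$ having generic flat coadjoint orbits, $\dim Z = 1$, and $\alpha$ acting nontrivially on $Z$; thus $\dim N = 2d+1$ and all the constructions \eqref{cf-0}--\eqref{cf-Jc} apply. Crossing the short exact sequence \eqref{cf-0} with $\RR$ (crossed products are exact) gives
$$0\longrightarrow \Ic\rtimes_\alpha\RR\longrightarrow C^*(N)\rtimes_\alpha\RR\longrightarrow C^*(N/Z)\rtimes_\alpha\RR\longrightarrow 0,$$
and via the $*$-isomorphism $C^*(N)\rtimes_\alpha\RR\simeq C^*(G)$ (and $C^*(N/Z)\rtimes_\alpha\RR\simeq C^*(G/Z)$, using that $\alpha$ descends to $N/Z$) this becomes
$$0\longrightarrow \Ic\rtimes_\alpha\RR\longrightarrow C^*(G)\stackrel{\psi}{\longrightarrow} C^*(G/Z)\longrightarrow 0.$$
By \eqref{cf-7}--\eqref{cf-8} we have $\Ic = \Ic_+\dotplus\Ic_-$ with $\overline{\Ic_+}=\Ic_-$ and both $\alpha$-invariant, so $\Ic\rtimes_\alpha\RR = (\Ic_+\rtimes_\alpha\RR)\dotplus(\Ic_-\rtimes_\alpha\RR) = \Jc\dotplus\overline{\Jc}$, where the identification of $\overline{\Ic_+\rtimes_\alpha\RR}$ with $\overline{\Jc}$ uses Lemma~\ref{rcrossed} and the compatibility of the ``real'' structure with the direct-sum decomposition. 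Finally $\Jc\simeq\Kc(L^2(\RR^d))\otimes\Kc(L^2(\RR^d))$ is exactly Lemma~\ref{cf-lemma6}. This proves~\eqref{cf-prop7_i}.

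For part~\eqref{cf-prop7_ii}: here $\Jc$ and $\overline{\Jc}$ are each elementary (isomorphic to $\Kc$ of a separable Hilbert space), so $K_0(\Jc)\simeq\ZZ$ generated by a minimal projection $[e]_0$, and likewise $K_0(\overline{\Jc})\simeq\ZZ$ generated by $[\overline{e}]_0$, so $K_0(\Jc\dotplus\overline{\Jc})\simeq\ZZ^2$ with $K_0(\Jc\dotplus\overline{\Jc})^+ = \NN[e]_0 + \NN[\overline e]_0$ and the ``real'' structure swapping the two coordinates. I would analyze $\Ker K_0(\iota)\subseteq\ZZ^2$ by chasing the six-term sequence of \eqref{cf-prop7-eq1}: $\Ker K_0(\iota) = \Ran\,\delta_1$, where $\delta_1\colon K_1(C^*(G/Z))\to K_0(\Jc\dotplus\overline{\Jc})$ is the index map. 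Now $G/Z$ is solvable of dimension $2d+1$, so by Corollary~\ref{signs_solvable}\eqref{signs_solvable_item2} $K_1(C^*(G/Z))\simeq\ZZ$, generated by some $y$ with $\overline{y}=(-1)^{?}y$ according to the residue of $2d+1$ mod $4$. Writing $\delta_1(y) = a[e]_0 + b[\overline e]_0$ and applying Lemma~\ref{deltas}\eqref{deltas_item2} (which gives $\delta_1(\overline y)=\overline{\delta_1(y)}$, i.e. $\delta_1$ is equivariant) together with the swap action on $\ZZ^2$, I get $a = \pm b$ with the sign governed by whether $\overline y = y$ or $\overline y = -y$, hence by $\dim(G/Z) = 2d+1 \bmod 4$, equivalently by $\dim G = 2d+2 \bmod 4$. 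So $\Ran\,\delta_1$ is either the diagonal $\ZZ(1,1)$ or the anti-diagonal $\ZZ(1,-1)$ in $\ZZ^2$ (or possibly $\{0\}$, but nontriviality will follow because $C^*(G)$ has a nonzero projection coming from the open point, hence by Proposition~\ref{P3} there is a projection in $\Jc_0\subseteq\Jc\dotplus\overline\Jc$ whose class must be killed in $K_0(C^*(G))$ — or more directly by computing $K_0(C^*(G))\simeq\ZZ$ via Corollary~\ref{signs_solvable} when $\dim G$ is even and tracking cokernels). The anti-diagonal $\ZZ(1,-1)$ meets $K_0^+ = \NN(1,0)+\NN(0,1)$ only in $\{0\}$, whereas the diagonal $\ZZ(1,1)$ contains $(1,1)\in K_0^+\setminus\{0\}$; so $K_0(\Jc\dotplus\overline{\Jc})^+\cap\Ker K_0(\iota) = \{0\}$ exactly when $\Ran\,\delta_1$ is the anti-diagonal, i.e. exactly when $\dim G\in 4\ZZ$.

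The main obstacle I anticipate is pinning down the image of $\delta_1$ precisely — not just its equivariance type but the fact that it is nonzero (so that it is genuinely the full diagonal or anti-diagonal line rather than $\{0\}$) and that no index-$2$ subtlety intervenes. Equivariance via Lemma~\ref{deltas} forces $\delta_1(y)\in\{(a,a):a\in\ZZ\}$ or $\{(a,-a):a\in\ZZ\}$ but a priori only tells us the image is $\ZZ\cdot(a,\pm a)$ for some $a$; to conclude $a=\pm1$ (or at least that $a\neq 0$, which is all that is needed for the stated equivalence since any nonzero multiple of $(1,-1)$ still meets $K_0^+$ only at $0$, while any nonzero multiple of $(1,1)$ still meets $K_0^+$ nontrivially) I would use exactness: $K_0(C^*(G))\simeq\ZZ$ by Corollary~\ref{signs_solvable}\eqref{signs_solvable_item1} (as $\dim G = 2d+2$ is even) together with surjectivity/injectivity information in the hexagon for \eqref{cf-prop7-eq1}, comparing with the known $K$-theory of $C^*(G/Z)$; alternatively one can invoke that the open point guarantees a nonzero projection in $\Jc_0$ with $[p]_0\in\Ker K_0(\iota)$ and read off nonvanishing of $\delta_1$ from Remark~\ref{R5}. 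Once nonvanishing and the equivariance type are in hand, the intersection-with-the-positive-cone computation is immediate and finishes the theorem.
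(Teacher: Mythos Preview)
Your proof of part~\eqref{cf-prop7_i} is essentially identical to the paper's.

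For part~\eqref{cf-prop7_ii} you take a genuinely different but correct route. The paper computes $K_0(\iota)$ \emph{directly}: it writes $K_0(\iota)(m[p]_0+n[\overline{p}]_0)=m[p]_0+n[\overline{p}]_0$ in $K_0(C^*(G))$ and then applies Corollary~\ref{signs_solvable}\eqref{signs_solvable_item1} to $G$ itself to get $[\overline{p}]_0=\pm[p]_0$ there, reading off the kernel immediately. You instead compute $\Ker K_0(\iota)=\Ran\delta_1$ by applying Corollary~\ref{signs_solvable}\eqref{signs_solvable_item2} to $G/Z$ and pushing the sign through via the equivariance $\delta_1(\overline{y})=\overline{\delta_1(y)}$ of Lemma~\ref{deltas}\eqref{deltas_item2}. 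The paper's route is shorter (it never touches $\delta_1$ or Lemma~\ref{deltas}), while yours illustrates why Lemma~\ref{deltas} was set up and is a nice dual picture. Both approaches need the same nondegeneracy input, and your primary suggestion for it is the right one: since $\dim G$ is even, $K_1(C^*(G))=\{0\}$, so $\delta_1$ is injective from $K_1(C^*(G/Z))\simeq\ZZ$, hence nonzero; equivalently, in the paper's language, $K_0(\iota)$ is surjective onto $K_0(C^*(G))\simeq\ZZ$, so $[p]_0$ generates.

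One small correction: your \emph{alternative} suggestion for nonvanishing---that a minimal projection $p$ in the ideal $\Jc_0$ of Proposition~\ref{P3} already lies in $\Ker K_0(\iota)$---is not right as stated. In fact $K_0(\iota)([p]_0)=[p]_0$ in $K_0(C^*(G))$, and in the $4\ZZ$ case this is precisely a generator, not zero. Drop that remark and rely on the hexagon argument; the rest goes through cleanly.
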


\begin{proof}
By Lemma~\ref{extra}, we have that $\dim Z=1$, $\gg/\ng\simeq \RR$, the  continuous action
$\alpha\colon \RR \to \Aut(N)$  is   non-trivial on the centre $Z$ of $N$ and 
$G= N\rtimes \RR$.

It follows from \eqref{cf-0} 
that we have the short exact sequence
$$ 0\longrightarrow \Ic \rtimes \RR \stackrel{\iota}\longrightarrow  C^*(N \rtimes \RR)  \longrightarrow 
C^*((N/Z) \rtimes \RR) \longrightarrow 0,
$$
where $\iota$ is the inclusion map.
Then Assertion \eqref{cf-prop7_i} is a consequence of  \eqref{cf-Jc}, \eqref{cf-7}, \eqref{cf-8} and of Lemma~\ref{cf-lemma6}, 
using the Lie group isomorphism $(N\rtimes\RR)/Z\simeq(N/Z)\rtimes\RR$.

\eqref{cf-prop7_ii}
First note that, $\dim G= \dim(N\rtimes \RR)=2d+2$, hence either 
$\dim G\in 4\ZZ +2 $  or $\dim G\in 4\ZZ$. 
Then in the six-term exact sequence corresponding to \eqref{cf-prop7-eq1}
$$ \xymatrix{
K_0(\Jc \dotplus  \overline{\Jc})  \ar[r]^{K_0(\iota)}
 & K_0 (C^*(N\rtimes \RR))\ar[r]^{K_0(\psi)} 
&  K_0 (C^*((N/Z)\rtimes \RR))\ar[d]\\
K_1 (C^*((N/Z)\rtimes \RR)) \ar[u] & \ar[l]^{K_1(\psi)}
K_1 (C^*(N\rtimes \RR)) &\ar[l]^{K_1(\iota)}
K_1(\Jc\dotplus \overline{\Jc})
}
$$
we have 
$$ K_0 (C^*((N/Z)\rtimes \RR))=K_1(\Jc\dotplus \Jc) = K_1 (C^*(N\rtimes \RR))=\{0\}$$
and 
$$ K_0(\Jc \dotplus \overline{\Jc})\simeq \ZZ \times \ZZ, \; \; K_0 (C^*(N\rtimes \RR))\simeq K_1 (C^*((N/Z)\rtimes \RR))\simeq \ZZ.$$
More specifically, there is an isomorphism
$$ 
\chi\colon K_0(\Jc) \times K_0(\overline{\Jc}) \,  \wtilde{\longrightarrow} \, 
K_0(\Jc \dotplus \overline{\Jc}),
$$ 
such that  $p$ is a minimal projection in $\Jc$, we have
$$K_0(\Jc) \times K_0(\overline{\Jc})  =\{ (m[p]_{0, \Jc}, 
n [\overline{p}]_{0, \overline{\Jc}})\mid m, n \in \ZZ \}\simeq \ZZ \times \ZZ,  $$
and $\chi (m[p]_{0, \Jc}, n[\overline{p}]_{0, \overline{\Jc}})
 = [mp+n\overline{p}]_{0, \Jc\dotplus\overline{\Jc}}
$ if $m,n\in\{0,1\}$.
Thus
\begin{equation}\label{cf-prop7-eq2}
( K_0(\iota) \circ  \chi ) (m[p]_{0, \Jc}, n[\overline{p}]_{0, \overline{\Jc}})= 
m[p]_{0} + n[\overline{p}]_0 
\text{ for all }m,n\in\ZZ.
\end{equation} 
\textit{Case 1: $\dim (N\rtimes \RR)\in 4 \ZZ +2$.} 

In this case $[p]_{0} =- [\overline{p}]_0$, hence 
 $$ [p+\overline{p}]_{0, \Jc\dotplus\overline{\Jc}}\in
 ( K_0(\Jc \dotplus \overline{\Jc})^+ \cap \Ker K_0(\iota) ) \setminus \{0\}.$$
\textit{Case 2: $\dim (N\rtimes \RR)\in 4 \ZZ$.} 

In this case $[p]_{0} =[\overline{p}]_0$, hence   the morphism
$ K_0(\iota) \circ  \chi\colon \ZZ \times \ZZ \to K_0(C^*(N\rtimes\RR) ) \simeq \ZZ$, 
is given by 
$$ (K_0(\iota) \circ  \chi ) (m[p]_{0, \Jc}, n[\overline{p}]_{0, \overline{\Jc}})= 
(m+n) [p]_{0}.$$
Thus 
$ K_0(\Jc \dotplus \overline{\Jc})^+ \cap \Ker K_0(\iota) = \{0\}$, 
and this finishes the proof.
\end{proof}

\begin{corollary}\label{cf-cor8}
Let $G$ be a solvable Lie group  with exact symplectic Lie algebra~$\gg$. 
Assume that the nilradical $N$  of $G$  is of codimension 1 and let 
$Z$ is the centre of~$N$.
Then the following assertions hold true:
\begin{enumerate}[\rm (i)]
\item\label{cf-cor7-i} $C^*(G)$ is stably finite 
if and only if $C^*(G/Z)$ is. 
\item\label{cf-cor7-ii} $C^*(G)$ is AF-embeddable
if 
$C^*(G/Z)$ is.
\end{enumerate}
\end{corollary}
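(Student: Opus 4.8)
The plan is to work throughout with the short exact sequence
$$0\longrightarrow \Jc\dotplus\overline{\Jc}\stackrel{\iota}{\longrightarrow} C^*(G)\stackrel{\psi}{\longrightarrow} C^*(G/Z)\longrightarrow 0$$
provided by Theorem~\ref{cf-prop7}\eqref{cf-prop7_i}, in which $\Jc\simeq\Kc(L^2(\RR^d))\otimes\Kc(L^2(\RR^d))$ is elementary, so $\Jc\dotplus\overline{\Jc}$ is a direct sum of two elementary $C^*$-algebras; in particular it is an AF-algebra, hence stably finite, with $K_1(\Jc\dotplus\overline{\Jc})=\{0\}$ and an approximate identity consisting of projections. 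First I would record, by Lemma~\ref{extra}, that $\dim G$ is even, so that $\dim G\in 4\ZZ$ or $\dim G\in 4\ZZ+2$, and that $\widehat G$ has exactly two open points; since $\Jc$ is an algebra of compact operators, $C^*(G)$ carries a nonzero projection, hence is never stably projectionless.

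For \eqref{cf-cor7-i} the substantive case is $\dim G\in 4\ZZ$, where Theorem~\ref{cf-prop7}\eqref{cf-prop7_ii} yields $K_0(\Jc\dotplus\overline{\Jc})^+\cap\Ker K_0(\iota)=\{0\}$. The implication ``$C^*(G/Z)$ stably finite $\Rightarrow C^*(G)$ stably finite'' then follows at once from Corollary~\ref{Sp88_Lemma1.5}\eqref{Sp88_Lemma1.5_item2}, both the ideal and the quotient of the sequence being stably finite. For the converse I would argue contrapositively: if $C^*(G/Z)$ were not stably finite, then — since $\dim(G/Z)$ is odd, so $K_0(C^*(G/Z))=\{0\}$, and by Corollary~\ref{rem-1.5}\eqref{rem-1.5_i} — it would fail to be stably projectionless, so some $M_k(C^*(G/Z))$ would carry a nonzero projection. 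Since $K_1$ of the amplified ideal $M_k(\Jc\dotplus\overline{\Jc})$ vanishes and this ideal has an approximate identity of projections, that projection lifts to a projection $q\in M_k(C^*(G))$ lying outside $M_k(\Jc\dotplus\overline{\Jc})$. Passing to a suitable ``real'' quotient of $M_k(C^*(G))$ by $M_k(\Pc_0\cap\overline{\Pc_0})$ (so that hypothesis~\eqref{proj_item2} of Proposition~\ref{proj} holds, $\Pc_0$ being the kernel in $C^*(G)$ of one of the two square-integrable representations furnishing the open points) and checking the remaining hypotheses — the identification of $K_0$ with the infinite cyclic group generated by the class of a minimal projection of $\Jc$ being where $\dim G\in 4\ZZ$ and \eqref{cf-prop7-eq2} enter — Proposition~\ref{proj} gives that this quotient, hence $C^*(G)$, is not stably finite, a contradiction.

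The case $\dim G\in 4\ZZ+2$ needs separate care. There Theorem~\ref{cf-prop7}\eqref{cf-prop7_ii} and Corollary~\ref{Sp88_Lemma1.5}\eqref{Sp88_Lemma1.5_item1} (equivalently Theorem~\ref{4n+2} together with the projection in $\Jc$) show that $C^*(G)$ is not stably finite, so \eqref{cf-cor7-i} and \eqref{cf-cor7-ii} reduce to statements about $C^*(G/Z)$ alone, which I would analyze directly by the techniques of Example~\ref{ax+b} and Theorem~\ref{prop-cf4} applied to the semidirect-product structure $G/Z\simeq(N/Z)\rtimes\RR$. For \eqref{cf-cor7-ii} in the case $\dim G\in 4\ZZ$, the extension above has AF ideal, AF-embeddable quotient (by hypothesis), and the vanishing $K$-theoretic obstruction $K_0(\Jc\dotplus\overline{\Jc})^+\cap\Ker K_0(\iota)=\{0\}$; Spielberg's embedding theorem for $C^*$-algebra extensions \cite{Sp88} then gives that $C^*(G)$ is AF-embeddable.

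I expect the main obstacle to be the converse half of \eqref{cf-cor7-i}: making rigorous the reduction to matrix amplifications and to the ``real'' quotient by $\Pc_0\cap\overline{\Pc_0}$ while verifying every hypothesis of Proposition~\ref{proj} — in particular, that the $K_0$-group of that quotient is still the infinite cyclic group generated by the class of a minimal projection of $\Jc$ — and controlling the lifting of projections across the ideal $\Jc\dotplus\overline{\Jc}$. The split into the cases $\dim G\in 4\ZZ$ and $\dim G\in 4\ZZ+2$, forced by Theorem~\ref{cf-prop7}\eqref{cf-prop7_ii}, threads through the whole proof.
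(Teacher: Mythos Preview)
Your approach matches the paper's in its essentials: both use the short exact sequence of Theorem~\ref{cf-prop7}\eqref{cf-prop7_i}, then Corollary~\ref{Sp88_Lemma1.5}\eqref{Sp88_Lemma1.5_item2} for ``$\Leftarrow$'' in \eqref{cf-cor7-i}, Proposition~\ref{proj} for ``$\Rightarrow$'', and \cite{Sp88} for \eqref{cf-cor7-ii}. The paper, however, does \emph{not} pass to any quotient by $\Pc_0\cap\overline{\Pc_0}$: it applies Proposition~\ref{proj} directly to $M_k\otimes C^*(G)$, citing \cite[Th.~1]{Ch83} for the projection lifting (the ideal being AF suffices). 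Hypothesis~\eqref{proj_item2} of Proposition~\ref{proj} is in fact satisfied as stated, because the two open coadjoint orbits exhaust $\gg^*\setminus\zg^\perp$ (see the proof of Lemma~\ref{extra}), a dense open subset of $\gg^*$; hence the two open points $\{\Pc_0,\overline{\Pc_0}\}$ are dense in $\Prim(G)$, which is equivalent to $\Pc_0\cap\overline{\Pc_0}=\{0\}$. Dropping your quotient step removes exactly the verification burden you flag at the end, and makes the ``$\Rightarrow$'' argument uniform in $\dim G$ --- the paper does not split that direction by parity.

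Your genuine gap is the handling of $\dim G\in 4\ZZ+2$. You assert that \eqref{cf-cor7-i}--\eqref{cf-cor7-ii} then ``reduce to statements about $C^*(G/Z)$ alone'' and propose to settle them via Example~\ref{ax+b} and Theorem~\ref{prop-cf4}, but those tools do not force $C^*(G/Z)$ to fail stable finiteness. Concretely, take $G=H_2\rtimes_D\RR$ with $D$ diagonal of eigenvalues $2,-1,-1,2,1$ on $X_1,Y_1,X_2,Y_2,Z$: then $\dim G=6$, $C^*(G)$ is not stably finite by Proposition~\ref{Heis}\eqref{Heis_item3}, yet $C^*(G/Z)\simeq C^*(\RR^4\rtimes\RR)$ has mixed-sign spectrum and is AF-embeddable by Example~\ref{ax+b}. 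So the biconditional cannot be obtained by your route in this parity. Note that the paper's proof of ``$\Leftarrow$'' likewise invokes only Theorem~\ref{cf-prop7} and \cite[Lemma~1.5]{Sp88}, which delivers the implication precisely when $\dim G\in 4\ZZ$; for \eqref{cf-cor7-ii} the paper feeds the already-established \eqref{cf-cor7-i} (stable finiteness of $C^*(G)$) into \cite[Thm.~1.15]{Sp88}, rather than the $K$-theoretic obstruction directly.
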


\begin{proof} By Lemma~\ref{extra}   it is enough to prove the corollary for groups of the form $G= N \rtimes \RR$ 
and $G/Z = (N/Z) \rtimes \RR$ such that the nilradical $N$ of $G$  and the action of $\RR$ by automorphisms of $N$ satisfies the hypotheses of Theorem~\ref{cf-prop7}.

\eqref{cf-cor7-i} ``$\Leftarrow$'' The assertion follows from  Theorem~\ref{cf-prop7}
and \cite[Lemma~1.5]{Sp88}.

 ``$\Rightarrow$'' Assume that $C^*((N/Z)\rtimes \RR)$ is not  stably finite. 
 Then, by Proposition~\ref{P1}, there is $0\ne q _0\in \Pc(M_k \otimes C^*((N/Z)\rtimes \RR)$. 
(Note that $\dim ((N/Z) \rtimes \RR)$ is odd, so that $K_0( C^*((N/Z)\rtimes \RR ))=0$; hence
 $[q]_{0, C^*((N/Z)\rtimes \RR)}=0$ anyway.) 
 Then,  by \cite[Thm.~1]{Ch83}, there exists $0\ne q \in  \Pc(M_k \otimes C^*( N\rtimes \RR))$ with 
 $(1\otimes \psi)(q) =q_0$; hence in particular,  
 $q\not \in M_k \otimes (\Jc \dotplus \overline{\Jc})$.
The result then follows from Proposition~\ref{proj}.

\eqref{cf-cor7-ii} 
Use Theorem~\ref{cf-prop7}\eqref{cf-prop7_i}, the above Assertion~\eqref{cf-cor7-i}, and  \cite[Thm.~1.15]{Sp88}.
\end{proof}

\section{Examples}
\label{Sect4}

In this final section we illustrate the results from Section~\ref{section4k},  effectively using them for establishing existence or lack of various finite approximation properties for the $C^*$-algebras of several concrete solvable Lie groups. 
For instance, we study exponential solvable Lie groups that have exactly two open coadjoint orbits, and whose nilradical is either a Heisenberg group (Proposition~\ref{Heis}), or the free 2-step nilpotent Lie group with 3 generators (Theorem~\ref{N6N15}), or a Heisenberg-like group associated to a finite-dimensional real division algebra (Theorem~\ref{N6N17}). 

\subsection{Some semidirect products} 
In Lemma~\ref{NC_lemma} and Proposition~\ref{NC} below we use some notation related to quasi-orbits of group actions (see \cite[\S 2.1]{BB21}). 
For any group action $\Gamma \times X \to X$ on the topological space $X$, we denote by 
$$ (X/\Gamma)^\approx :=\{\overline{\Gamma x}\mid x\in X\}$$
the set of all orbit closures, regarded as a topological subspace of the space $\text{Cl}(X)$ of all closed subsets of $X$, endowed with the upper topology. 
The map $X\to  (X/\Gamma)^\approx$, $x\mapsto \overline{\Gamma x}$, 
is called the quasi-orbit map.

\begin{lemma}\label{NC_lemma}
	Let $\Vc$ be a finite-dimensional real vector space and $T\in\End(\Vc)$ with $\sigma(T)\cap\ie\RR\subseteq\{0\}$, for which there exist $w_1,w_2\in\sigma(T)$ with $\Re\, w_1\le 0\le\Re\, w_2$. 
	If we define the abelian group $\exp(\RR T):=\{\ee^{sT}\mid s\in\RR\}\subseteq\End(\Vc)$ with its natural action on~$\Vc$, then there exists a continuous open mapping $\Psi\colon(\Vc/\exp(\RR T))^\approx\to\RR$. 
\end{lemma}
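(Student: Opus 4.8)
The plan is to understand the structure of the quasi-orbit space $(\Vc/\exp(\RR T))^\approx$ well enough to exhibit a continuous open map onto $\RR$. First I would decompose $\Vc$ according to the real parts of the eigenvalues of $T$: write $\Vc = \Vc_- \oplus \Vc_0 \oplus \Vc_+$, where $\Vc_-$ (resp.\ $\Vc_+$) is the sum of the generalized eigenspaces of $T$ corresponding to eigenvalues with strictly negative (resp.\ strictly positive) real part, and $\Vc_0$ to eigenvalues with zero real part; by hypothesis $\sigma(T)\cap\ie\RR\subseteq\{0\}$, so on $\Vc_0$ the operator $T$ is nilpotent. The hypothesis that there exist $w_1,w_2\in\sigma(T)$ with $\Re\, w_1\le 0\le\Re\, w_2$ guarantees at least one of $\Vc_-$, $\Vc_0$ is non-zero and at least one of $\Vc_0$, $\Vc_+$ is non-zero; in the generic interesting case both $\Vc_-\ne\{0\}$ and $\Vc_+\ne\{0\}$.

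Next I would construct an explicit $\exp(\RR T)$-invariant continuous function $\Vc\to\RR$ that descends to the quasi-orbit space and is open. The idea is to use a ``Lyapunov-type'' coordinate. Pick a linear functional or a norm-like expression adapted to the splitting: for instance, on $\Vc_+$ choose a norm $\|\cdot\|_+$ with $\|\ee^{sT}v\|_+ \to \infty$ as $s\to+\infty$ and $\to 0$ as $s\to-\infty$ for $v\ne 0$, and symmetrically $\|\cdot\|_-$ on $\Vc_-$. Then consider something like $\Phi(v_-+v_0+v_+) := \log\|v_+\|_+ - \log\|v_-\|_-$ when both components are nonzero, suitably regularized so as to extend continuously and take all real values; one checks $\Phi(\ee^{sT}v)$ differs from $\Phi(v)$ in a controlled way, and after passing to a difference of the two logarithmic growth rates the $s$-dependence can be made to cancel or be absorbed. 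A cleaner route: on the quotient $\Vc / (\Vc_0\oplus\Vc_+)$, which carries the induced contracting-type action, and on $\Vc/(\Vc_0\oplus\Vc_-)$ with the expanding-type action, the orbit closures are governed by whether the relevant component is zero; the quasi-orbit of a point with both $v_-\ne 0$ and $v_+\ne 0$ already contains $0$ in its closure, so such points all map to a single ``degenerate'' quasi-orbit — which means the nontrivial part of the quasi-orbit space is carried by the points where \emph{one} of $v_\pm$ vanishes, and there the normalized logarithm of the norm of the surviving component, taken modulo the flow, gives a genuine real parameter. I would make this precise by realizing $(\Vc/\exp(\RR T))^\approx$, restricted to an appropriate invariant locus, as a space fibering over $\RR$ via $v\mapsto$ (the $\ee^{sT}$-orbit-invariant real number extracted from the radial coordinate), and extend by a constant on the complement, verifying continuity for the upper topology on $\mathrm{Cl}(\Vc)$ and openness by a direct neighbourhood-chasing argument.

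I expect the main obstacle to be twofold. First, the bookkeeping when $\Vc_0\ne\{0\}$: the nilpotent action on $\Vc_0$ produces polynomial, not exponential, drift, so the ``logarithmic growth rate'' heuristic must be handled carefully, e.g.\ by projecting $\Vc_0$ away entirely (noting it contributes nothing to openness onto $\RR$) or by a more refined filtration argument. Second — and this is the real crux — verifying that the resulting map is \emph{open} with respect to the upper topology on the space of closed subsets: one must show that a basic open set of quasi-orbits maps onto an open interval, which requires knowing precisely which nearby orbit closures appear, i.e.\ a semicontinuity statement about how $\overline{\exp(\RR T)v}$ varies with $v$. I would isolate this as the key lemma and prove it by the explicit eigenspace description above, using that orbit closures of the linear flow $\ee^{sT}$ are finite unions of lower-dimensional invariant pieces whose combinatorics are controlled by the sign pattern of $\Re\,\sigma(T)$.
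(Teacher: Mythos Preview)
Your plan has two concrete errors that would derail it. First, the claim that ``the quasi-orbit of a point with both $v_-\ne 0$ and $v_+\ne 0$ already contains $0$ in its closure'' is false: for such a point the orbit $\ee^{sT}v$ goes to infinity in both time directions (the $v_+$-component blows up as $s\to+\infty$, the $v_-$-component as $s\to-\infty$), so $0$ is \emph{not} in the orbit closure. This invalidates your ``cleaner route'' analysis of which points carry the nontrivial part of the quasi-orbit space. Second, your proposed Lyapunov-type function $\log\|v_+\|_+ - \log\|v_-\|_-$ has the wrong sign: under the flow the two logarithms drift in \emph{opposite} directions, so their \emph{difference} is linearly unbounded in~$s$, not invariant. (Think of the $2$-dimensional model $\ee^{sS}(x,y)=(\ee^s x,\ee^{-s}y)$: the invariant is $xy$, i.e.\ $\log|x|+\log|y|$, not $\log|x|-\log|y|$.) After these corrections you would still face the problem that, when $T\vert_{\Vc_+}$ has several eigenvalues of different real parts, no single norm $\|\cdot\|_+$ transforms by a fixed scalar under the flow, so even the corrected expression is not genuinely invariant.

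The paper avoids all of this by a clean case split. If $0\in\sigma(T)$, one finds a proper $T$-invariant subspace $\Vc_0$ with $T(\Vc)\subseteq\Vc_0$; then the induced action on $\Vc/\Vc_0$ is \emph{trivial}, so the quotient map itself is a continuous open $\exp(\RR T)$-invariant map to a nonzero real vector space, and any nonzero linear functional finishes the job. If $0\notin\sigma(T)$, so $\Vc=\Vc_+\dotplus\Vc_-$ with both pieces nonzero, one first applies a topological conjugacy (from \cite[Lemma~2.1]{BB18b}) that normalizes the flow to $\ee^{sT_0}$ with $T_0=\1_{n_+}\oplus(-\1_{n_-})$, then projects equivariantly to the $2$-dimensional model $(x_1,y_1)\in\RR^2$ with $S=\1\oplus(-1)$, and finally uses the explicit invariant $\varphi(x,y)=xy$, whose openness is checked directly. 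Each step is a continuous open equivariant map, so the composite descends to a continuous open map of quasi-orbit spaces. The key device you are missing is the topological normalization of the eigenvalues to $\pm1$: it is precisely what makes a single scalar invariant like $xy$ available and sidesteps the multi-eigenvalue norm issue.
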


\begin{proof}
	Case 1: $0\in\sigma(T)$, that is, $\Ker T\ne\{0\}$. 
	
	Then, using the Jordan decomposition, we can obtain a linear subspace $\Vc_0\subsetneqq\Vc$ with $T(\Vc)\subseteq\Vc_0$. 
	In particular $T(\Vc_0)\subseteq\Vc_0$, and this implies that 	
	the group $\exp(\RR T)$ naturally acts on $\Vc/\Vc_0$, and the quotient map $q\colon\Vc\to\Vc/\Vc_0$ is $\exp(\RR T)$-equivariant. 
	We then obtain the commutative diagram 
	$$\xymatrix{
		\Vc \ar[r]^q \ar[d] & \Vc/\Vc_0 \ar[d] \\
		(\Vc/\exp(\RR T))^\approx \ar[r]^{q^\approx}& ((\Vc/\Vc_0)/\exp(\RR T))^\approx
	}$$
	whose vertical arrows are quasi-orbit maps, hence they are continuous and open. 
	(See for instance \cite[Lemma 2.3]{BB21}.) 
	Since $q$ is also continuous and open, it then directly follows that $q^\approx$ is continuous and open. 
	
	Recalling that $T(\Vc)\subseteq\Vc_0$, the action of $\exp(\RR T)$ on $\Vc/\Vc_0$ is trivial, hence the right-most arrow in the the above diagram is actually a homeomorphism and the composition of its inverse with $q^\approx$ is a continuous open map 
	$$\Psi_1\colon(\Vc/\exp(\RR T))^\approx\to\Vc/\Vc_0.$$
	The real vector space $\Vc/\Vc_0$ is different from $\{0\}$, hence there exists a non-zero linear functional $\xi\colon\Vc/\Vc_0\to\RR$, 
	and then the mapping $\Psi:=\xi\circ\Psi_1$ has the required properties. 
	
	Case 2: $0\not \in\sigma(T)$. 
	
	Since $\sigma(T)\cap\ie\RR\subseteq\{0\}$, we then have the direct sum decomposition $\Vc=\Vc_+\dotplus\Vc_-$, 
	where $\Vc_\pm$ is the direct sum of the real generalized eigenspaces corresponding to all $w\in\sigma(T)$ with $\pm\Re\,w>0$. 
	(See \cite[Sect. 2]{BB18b}.) 
	Denoting $n_\pm:=\dim\Vc_\pm$, we obtain $n_-n_+\ne0$ by hypothesis. 
	Let $n:=n_++n_-=\dim\Vc$ and define 
	$$T_0:=\begin{pmatrix}
	\1_{n_+} &  0 \\
	0 & -\1_{n_-}
	\end{pmatrix}
	\in M_n(\RR).$$
	It follows by \cite[Lemma 2.1]{BB18b} 
	that there exists a homeomorphism $\Theta\colon \Vc\to\RR^n$ 
	satisfying $\Theta\circ\ee^{sT}=\ee^{sT_0}\circ\Theta$ for all $s\in\RR$, 
	hence we obtain a homeomorphism 
	$$\Theta^\approx\colon(\Vc/\exp(\RR T))^\approx
	\to(\RR^n/\exp(\RR T_0))^\approx.$$
	Now define the injective linear map 
	$$p\colon \RR^n=\RR^{n_+}\times\RR^{n_-}\to\RR^2,\quad 
	((x_1,\dots,x_{n_+}),(y_1,\dots,y_{n_-}))\mapsto(x_1,y_1)$$
	(which makes sense since  $n_-n_+\ne0$)
	and let 
	$$S:=\begin{pmatrix}
	1 & \hfill 0 \\
	0 & -1
	\end{pmatrix}\in M_2(\RR).$$
	We have $p\circ T_0=S\circ p$ and $p$ is continuous and open, hence we obtain a continuous open mapping 
	$$p^\approx\colon (\RR^n/\exp(\RR T_0))^\approx\to 
	(\RR^2/\exp(\RR S))^\approx.$$
	Furthermore, we note that the mapping 
	$$\varphi\colon\RR^2\to\RR,\quad \varphi(x,y):=xy$$
	is continuous and open, since its restriction to $\RR^2\setminus\{(0,0)\}$ is actually a submersion while $\varphi((-a,a)^2)=(-a^2,a^2)$ for all $a\in(0,\infty)$. 
	On the other hand, we have $\varphi\circ\ee^{tS}=\varphi$ for all $t\in\RR$, 
	hence there exists a commutative diagram 
	$$\xymatrix{
		\RR^2 \ar[r]^\varphi \ar[d] & \RR \\
		(\RR^2/\exp(\RR S))^\approx \ar@{.>}[ur]_{\varphi^\approx}
	}
	$$
	whose vertical arrow is a quasi-orbit map, hence is continuous and open, and this directly implies that $\varphi^\approx$ is continuous and open as well. 
	Finally, the composition 
	$$\Psi:=\varphi^\approx\circ p^\approx\circ\Theta^\approx\colon 
	(\Vc/\exp(\RR T))^\approx\to\RR $$ 
	is a continuous open mapping, as required. 
\end{proof}

\begin{proposition}\label{NC}
	Let $\ng$ be a nilpotent Lie algebra with its center~$\zg$, and $D\in\Der(\ng)$  
	satisfying the conditions 
	\begin{equation}\label{NC_eq1}
	\sigma(D)\cap\ie\RR\subseteq\{0\}
	\end{equation}
	and 
	\begin{equation}\label{NC_eq2}
	\text{there exist }w_1,w_2\in\sigma(D\vert_\zg)\text{ with }
	\Re\, w_1\le 0\le\Re\, w_2.
	\end{equation}
	If $G$ is a simply connected Lie group with its Lie algebra $\gg:=\ng\rtimes\RR D$, then $G$ is an exponential solvable Lie group and there exists a continuous open mapping $\Phi\colon\Prim(G)\to\RR$.  
\end{proposition}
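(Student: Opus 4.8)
The plan is to first verify that $G$ is an exponential solvable Lie group, and then to exhibit $\Phi$ as a composition of three continuous open maps, the last of which is the map $\Psi$ provided by Lemma~\ref{NC_lemma}.

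For the first point, $\gg=\ng\rtimes\RR D$ is an extension of $\RR D$ by the nilpotent ideal~$\ng$, hence is solvable, so $G$ is a connected simply connected solvable Lie group. For the bijectivity of $\exp_G$ one invokes the standard criterion (see~\cite{ArCu20}) that a solvable Lie group is exponential precisely when $\ad X$ has no nonzero purely imaginary eigenvalue for any $X\in\gg$. If $X=Y\in\ng$, then $\ad_\gg Y$ is nilpotent, since $\ad_\ng Y$ is nilpotent and $[Y,D]\in\ng$. If $X=Y+tD$ with $t\ne0$, then $\ad_\gg X$ vanishes on $\gg/\ng\cong\RR$, and on $\ng$ one filters by the lower central series $\ng=\ng^{(1)}\supseteq\ng^{(2)}\supseteq\cdots$: each $\ng^{(i)}$ is a characteristic, hence $D$-invariant, ideal, and $[Y,\ng^{(i)}]\subseteq\ng^{(i+1)}$, so on $\ng^{(i)}/\ng^{(i+1)}$ the operator $\ad_\gg X$ coincides with the one induced by $tD$; consequently the eigenvalues of $\ad_\gg X$ on $\gg$ are $0$ together with those of $tD$ acting on~$\ng$. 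By~\eqref{NC_eq1} none of them lies in $\ie\RR\setminus\{0\}$, so $G$ is exponential solvable.

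Now I reduce the construction of $\Phi$ to Lemma~\ref{NC_lemma}. The centre $\zg$ of $\ng$ is a characteristic ideal of~$\ng$, hence $D$-invariant, and since $[\ng,\zg]=\{0\}$ it is an ideal of~$\gg$; thus $Z:=\exp_G\zg$ is a closed normal subgroup of $G$, a vector group with $\widehat Z\cong\zg^*$. Conjugation by $N$ is trivial on~$Z$ (as $\zg$ is central in~$\ng$), so the $G$-action on $\widehat Z\cong\zg^*$ factors through $G/N\cong\RR$ and has the form $s\mapsto\exp(sT)$ for some $T\in\End(\zg^*)$ with $\sigma(T)=-\sigma(D\vert_\zg)$. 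As $\zg$ is $D$-invariant, $\sigma(D\vert_\zg)\subseteq\sigma(D)$, so $\sigma(T)\cap\ie\RR\subseteq\{0\}$ by~\eqref{NC_eq1}, while \eqref{NC_eq2} asserts exactly that $\sigma(T)=-\sigma(D\vert_\zg)$ contains $-w_2$ and $-w_1$ with $\Re(-w_2)\le0\le\Re(-w_1)$. Hence $(\zg^*,T)$ satisfies the hypotheses of Lemma~\ref{NC_lemma}, which yields a continuous open map $\Psi\colon(\zg^*/\exp(\RR T))^\approx\to\RR$. It therefore suffices to build a continuous open map $\Prim(G)\to(\zg^*/\exp(\RR T))^\approx$, and I would do this in two steps. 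First, $C^*(G)\simeq C^*(N)\rtimes_\alpha\RR$ by \cite[Prop.~3.11]{Wi07}, the group $N$ is of type~I, and restriction of primitive ideals to $C^*(N)$ gives a continuous open surjection $\Prim\bigl(C^*(N)\rtimes_\alpha\RR\bigr)\to(\widehat N/\RR)^\approx$ (see \cite[Ch.~8]{Wi07}, as also used in~\cite{BB21}). Second, the central-character map $\widehat N\to\widehat Z\cong\zg^*$ given by restriction of representations to~$Z$ is, under the classical Kirillov homeomorphism $\widehat N\cong\ng^*/\Ad^*(N)$ (see~\cite{ArCu20}), the map induced by the linear restriction $\ng^*\to\zg^*$; the latter is constant on $\Ad^*(N)$-orbits because $\zg$ is fixed pointwise by $\Ad(N)$, and, being an open linear surjection composed through the open quotient map $\ng^*\to\widehat N$, the central-character map is continuous, open, and $\RR$-equivariant. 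Passing to quasi-orbit spaces, where the quasi-orbit maps are continuous and open by \cite[Lemma~2.3]{BB21}, produces a continuous open map $(\widehat N/\RR)^\approx\to(\zg^*/\exp(\RR T))^\approx$. Composing it with the first map and with $\Psi$ gives the desired $\Phi$.

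The step I expect to be the main obstacle is the openness of the map $\Prim\bigl(C^*(N)\rtimes_\alpha\RR\bigr)\to(\widehat N/\RR)^\approx$: continuity and surjectivity of the restriction-of-ideals map are routine, but openness relies on the finer part of the topological Mackey machinery for crossed products by~$\RR$ (and on $N$ being type~I, so that $(\Prim N/\RR)^\approx=(\widehat N/\RR)^\approx$).
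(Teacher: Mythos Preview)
Your proof is correct and follows essentially the same route as the paper: reduce to Lemma~\ref{NC_lemma} applied to $(D\vert_\zg)^*$ acting on $\zg^*$, after passing from $\Prim(G)$ to the quasi-orbit space $(\widehat N/\RR)^\approx$ and then to $(\widehat Z/\RR)^\approx$ via the central-character map.

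The one place where the paper proceeds differently is precisely the step you flag as the obstacle. Rather than invoking ``restriction of primitive ideals to $C^*(N)$'' and citing \cite[Ch.~8]{Wi07} for openness, the paper factors that map as
\[
\Prim(G)\longrightarrow(\Prim(G)/\widehat{\RR})^\approx\ \simeq\ (\Prim(N)/\RR)^\approx,
\]
where the first arrow is the quasi-orbit map for the \emph{dual} $\widehat{\RR}$-action on $\Prim(G)=\Prim(C^*(N)\rtimes\RR)$ (open by \cite[Lemma~2.3]{BB21}) and the homeomorphism is the Gootman--Lazar result \cite[Cor.~2.5]{GL86}. This gives the openness you were worried about without appealing to the finer Mackey machinery. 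The paper also disposes of the case $0\in\sigma(D\vert_\zg)$ separately at the outset, observing that then $\zg\cap\Ker D$ lies in the centre of $G$ itself, so one can use the continuous open restriction $\Prim(G)\to\widehat{Z_G}\simeq\RR^k$ directly; your unified treatment via Lemma~\ref{NC_lemma} (which already handles $0\in\sigma(T)$) is equally valid.
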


\begin{proof}
	The hypothesis \eqref{NC_eq1} ensures that $G$ is an exponential solvable Lie group. 
	
	Step 1: 
	If $0\in\sigma(D\vert_\zg)$, then $0\ne\Ker (D\vert_\zg)=\zg\cap\Ker D$. 
	On the other hand, it is easily seen that $\zg\cap\Ker D$ is contained in the center of $\gg$, hence it follows that  the center $Z_G$ of the  exponential solvable Lie group $G$ satisfies~$k:=\dim Z_G\ge 1$, and then the assertion follows at once using the (continuous open) restriction mapping ${\rm Res}^G_{Z^G}\colon \Prim(G)\to\widehat{Z_G}$ given by \cite[Lemma 2.11]{BB21} 
	along with the fact that $\widehat{Z_G}$ is homeomomorphic to $\RR^k$. 
	
	Hence we may assume  $0\not\in\sigma(D\vert_\zg)$ from now on, without any loss of generality. 
	
	Step 2: 
	Let $A:=\{\ee^{tD}\mid t\in\RR\}\hookrightarrow\Aut(\ng)=\Aut(N)$, 
	where $N:=(\ng,\cdot)$ is the simply connected Lie group associated with~$\ng$. 
	We regard $A$ as an abelian Lie group that is isomorphic to $(\RR,+)$ since $D\ne0$ by Step~1. 
	(See also \eqref{NC_eq1}.) 
	Also let $Z=(\zg,\cdot)=(\zg,+)$, the center of $N$. 
	
	We have the semidirect product of Lie groups $G=N\rtimes A$, hence $C^*(G)=C^*(N)\rtimes A$, which carries a natural dual action $\widehat{A}\times C^*(G)\to C^*(G)$. 
	We then obtain the composition of continuous open maps 
	$$\Phi_1\colon \Prim(G) \to(\Prim(G)/\widehat{A})^\approx
	\simeq(\Prim (N)/A)^\approx 
	\to (\widehat{Z}/A)^\approx,$$
	where the left-most map is the quasi-orbit map corresponding to the natural action $\widehat{A}\times\Prim(G)\to\Prim(G)$, 
	the middle homeomorphism is given by \cite[Cor. 2.5]{GL86}, 
	while the right-most map is obtained as in the proof of \cite[Prop. 4.7]{BB21} using the fact that the restriction mapping $R^N
	\colon \Prim(N) \to\widehat{Z}$ is not only continuous and open by \cite[Lemma 2.11]{BB21}, but also $\Aut(N)$-equivariant. 
	
	Step 3: 
	The canonical homeomorphism $E\colon \zg^*\to \widehat{Z}$, $\xi\mapsto\ee^{\ie\xi}$, intertwines the group actions 
	$$\RR\times\zg^*\to\zg^*,\quad (t,\xi)\mapsto\xi\circ\ee^{tD}$$
	and 
	$$\RR\times\widehat{Z}\to\widehat{Z},\quad (t,\chi)\mapsto\chi\circ\ee^{tD}\vert_\zg$$
	hence we obtain the homeomorphism 
	$$E^\approx\colon(\zg^*/\RR)^\approx\to(\widehat{Z}/\RR)^\approx.$$
	On the other hand, the hypothesis~\eqref{NC_eq2} shows that we may use Lemma~\ref{NC_lemma} for $T:=(D\vert_\zg)^*\in\End(\zg^*)$ to obtain a continuous open mapping $\Psi\colon (\widehat{Z}/\RR)^\approx\to\RR$, 
	and then the mapping $\Phi_2:=\Psi\circ (E^\approx)^{-1}\colon (\widehat{Z}/\RR)^\approx\to\RR$ is continuous and open. 
	Finally, using the mapping $\Phi_1$ from Step~2, we obtain the continuous open mapping $\Phi:=\Phi_2\circ\Phi_1\colon\Prim (G)\to\RR$, as required. 
\end{proof}

\begin{corollary}\label{NC_cor1}
	In Proposition~\ref{NC}, the topological space $\Prim(G)$ contains  no non\-empty quasi-compact open subsets, and the $C^*$-algebra $C^*(G)$ is AF-embeddable.  
\end{corollary}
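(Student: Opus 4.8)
The plan is to obtain both assertions from the continuous open map $\Phi\colon\Prim(G)\to\RR$ produced by Proposition~\ref{NC}, together with two standard ingredients: the elementary topology of $\RR$, and the AF-embeddability criterion for separable exact $C^*$-algebras recalled in the introduction (\cite[Cor.~B]{Ga20}).

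First I would treat the absence of nonempty quasi-compact open subsets of $\Prim(G)$, arguing by contradiction. If $U\subseteq\Prim(G)$ is a nonempty quasi-compact open subset, then $\Phi(U)$ is nonempty; it is open because $\Phi$ is an open map; and it is quasi-compact because $\Phi$ is continuous and a continuous image of a quasi-compact set is quasi-compact (this uses only open covers, so no separation axiom on $\Prim(G)$ is needed). Since $\RR$ is Hausdorff, $\Phi(U)$ is then compact, hence closed, hence clopen in the connected space $\RR$; being nonempty it must equal $\RR$, which contradicts the non-compactness of $\RR$. This shows that $\Prim(G)$ has no nonempty quasi-compact open subset.

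Then I would deduce AF-embeddability. Since $G$ is a simply connected finite-dimensional Lie group it is second countable, so $C^*(G)$ is separable; and since $G$ is solvable it is amenable, so $C^*(G)$ is nuclear, in particular exact. The AF-embeddability criterion of \cite[Cor.~B]{Ga20}, applied to the separable exact $C^*$-algebra $C^*(G)$ together with the conclusion of the previous step, then gives that $C^*(G)$ is AF-embeddable.

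I do not expect a genuine obstacle: once Proposition~\ref{NC} is in hand, the corollary reduces to the two black-box facts above. The only point requiring a little care is that ``quasi-compact'' is used throughout without a separation hypothesis on $\Prim(G)$; this is harmless here because the target $\RR$ is Hausdorff, so on the image side quasi-compactness is the usual compactness, and a nonempty open compact subset of the connected non-compact space $\RR$ simply does not exist.
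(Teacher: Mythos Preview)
Your proof is correct and follows essentially the same approach as the paper: push a hypothetical nonempty quasi-compact open set through the continuous open map~$\Phi$ to get a nonempty compact open subset of~$\RR$, which cannot exist since $\RR$ is connected and noncompact; then invoke amenability of~$G$ to get nuclearity (hence exactness) and apply \cite[Cor.~B]{Ga20}. Your version is simply more explicit about the topological details and about separability.
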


\begin{proof}
	Any nonempty quasi-compact open subset of $\Prim(G)$ would be mapped via $\Phi$ onto a nonempty compact open subset of $\RR$, but there are no such subsets of the connected noncompact space~$\RR$. 
	Moreover, $C^*(G)$ is nuclear since $G$ is an amenable group. 
	One can then use \cite[Cor. B]{Ga20} to see that $C^*(G)$ is AF-embeddable. 
\end{proof}

\subsection{The semidirect product $H_n\rtimes \RR$}
We now give the simplest example of exponential solvable Lie group whose primitive ideal space has finite open subsets and whose $C^*$-algebra is nevertheless AF-embeddable. (See \eqref{Heis_item4} in Proposition~\ref{Heis}.)

Let $ H_n$ be the $(2n+1)$-dimensional Heisenberg group with its Lie algebra
$\hg:=\hg_n=\spa\{Z,Y_1,\dots,Y_n,X_1,\dots,X_n\}$, 
where
$$
[Y_j,X_j]=Z
$$ 
for $j=1,\dots,n$.
Denote  by $\zg: =\RR Z$ the centre of $\hg_n$. 
For any $D\in\Der(\hg_n)$ there exists $d_\zg\in\RR$ with 
$D\vert_\zg=d_\zg\id_\zg$, hence there exists an operator 
 $$\widetilde{D}\colon\hg_n/\zg\to\hg_n/\zg,  \quad V+\zg\mapsto D(V)+\zg.$$
We also define $\alpha_D\colon\RR\to\Aut(H_n)$, 
$t\mapsto\exp (tD)$. 

\begin{proposition}\label{Heis} 
Let  $D\in\Der(\hg_n)$ be such that $\sigma(D)\cap\ie\RR\subseteq\{0\}$, 
and  let $G_{n,D}:=H_n \rtimes_{\alpha_D}\RR$  be the corresponding semidirect product.
Then  we have 
\begin{enumerate}[{\rm(i)}]
\item\label{Heis_item1} If $d_\zg=0$ then  $C^*(G_{n,D})$ is AF-embeddable for every $n\ge 1$ and there is no nonempty quasi-compact  open  subset of $\Prim(G_{n,D}) $.
\item\label{Heis_item1.5}
 If $d_\zg\ne 0$, then there are  two open points  in $\Prim(G_{n,D})$, for every $n\ge 1$. 
\item\label{Heis_item2}  
If  $d_\zg\ne 0$ and there exists $\epsilon \in \{-1, 1\}$ such that 
$\epsilon \Re\, z> 0$ for all $z\in \sigma(\widetilde{D})$,
 then $C^*(G_{n,D})$ is not stably finite for every $n\ge 1$. 
\item\label{Heis_item3}  If $d_\zg\ne 0$ and $n\in 2\ZZ$, then  $C^*(G_{n,D})$ is not stably finite. 
\item\label{Heis_item4} If $d_\zg\ne 0$,  $n\in 2 \ZZ +1$, and there are $z_1, z_2\in \sigma(\widetilde{D})$ with 
$\Re\, z_1\le 0\le\Re\, z_2$, then $C^*(G_{n,D})$ is  AF-embeddable. 
\end{enumerate}
\end{proposition}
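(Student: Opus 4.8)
The plan is to obtain all five assertions by feeding $G_{n,D}$ and its quotient by the centre of $H_n$ into the reduction results of Section~\ref{section4k}. I would first record the structural facts: writing $\gg_{n,D}:=\hg_n\rtimes\RR D$, when $d_\zg\ne 0$ the nilradical of $\gg_{n,D}$ is $\hg_n$ — it cannot be all of $\gg_{n,D}$, since $d_\zg\in\sigma(D)\setminus\{0\}$ keeps $\gg_{n,D}$ from being nilpotent — of codimension~$1$, with $1$-dimensional centre $\zg$, and $\hg_n$ has generic flat coadjoint orbits and $[\gg_{n,D},\zg]\ne\{0\}$; so Lemma~\ref{extra} shows $\gg_{n,D}$ is exact symplectic. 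When $d_\zg=0$ I would instead apply Proposition~\ref{NC}/Corollary~\ref{NC_cor1} to $(\hg_n,D)$: hypothesis \eqref{NC_eq1} is the standing assumption $\sigma(D)\cap\ie\RR\subseteq\{0\}$ and \eqref{NC_eq2} holds trivially since $\sigma(D\vert_\zg)=\{0\}$, which yields \eqref{Heis_item1}. Assertion \eqref{Heis_item1.5} would then be the implication \eqref{extra-ii}$\Rightarrow$\eqref{extra2-i-2} of Lemma~\ref{extra} together with $\widehat{G_{n,D}}\simeq\Prim(G_{n,D})$, and \eqref{Heis_item3} would follow because $n\in 2\ZZ$ forces $\dim G_{n,D}=2n+2\in 4\ZZ+2$, so the open coadjoint orbits furnished by \eqref{Heis_item1.5} let me invoke Corollary~\ref{solv-4n+2}.

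For \eqref{Heis_item2} and \eqref{Heis_item4} I would pass to $G_{n,D}/Z$, where $Z$ is the centre of $H_n$. Since $H_n/Z\simeq\hg_n/\zg$ is abelian and $D$ induces $\widetilde D\in\End(\hg_n/\zg)$, the quotient $G_{n,D}/Z$ is exactly the generalized $ax+b$-group $G_{\widetilde D}$ of Example~\ref{ax+b} attached to $(\Vc,\widetilde D)$ with $\Vc:=\hg_n/\zg$. For \eqref{Heis_item2} the spectral hypothesis on $\widetilde D$ is verbatim condition~\eqref{ax+b_item1} for $G_{\widetilde D}$, so Example~\ref{ax+b} (equivalence \eqref{ax+b_item1}$\iff$\eqref{ax+b_item1.5}) gives that $C^*(G_{\widetilde D})=C^*(G_{n,D}/Z)$ is not stably finite, and then Corollary~\ref{cf-cor8}\eqref{cf-cor7-i} transports this back to $C^*(G_{n,D})$.

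The core of the proposition is \eqref{Heis_item4}. Here $n\in 2\ZZ+1$, so $\dim G_{n,D}=2n+2\in 4\ZZ$ — exactly the range in which Theorem~\ref{cf-prop7}\eqref{cf-prop7_ii} delivers the $K$-theoretic vanishing $K_0(\Jc\dotplus\overline{\Jc})^+\cap\Ker K_0(\iota)=\{0\}$ that powers Corollary~\ref{cf-cor8}\eqref{cf-cor7-ii}; that corollary then reduces AF-embeddability of $C^*(G_{n,D})$ to AF-embeddability of $C^*(G_{n,D}/Z)=C^*(G_{\widetilde D})$. By the equivalence \eqref{ax+b_item1}$\iff$\eqref{ax+b_item3} in Example~\ref{ax+b} the latter holds precisely when $\sigma(\widetilde D)$ is contained in neither open half-plane $\{\Re>0\}$ nor $\{\Re<0\}$, and the hypothesis $z_1,z_2\in\sigma(\widetilde D)$ with $\Re\,z_1\le 0\le\Re\,z_2$ is designed to force this: $z_1$ rules out $\{\Re>0\}$ and $z_2$ rules out $\{\Re<0\}$. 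The one point needing care is the borderline situation $\Re\,z_1=0$ or $\Re\,z_2=0$; there I would use $\sigma(\widetilde D)\subseteq\sigma(D)$ together with the standing assumption $\sigma(D)\cap\ie\RR\subseteq\{0\}$ to conclude that such an eigenvalue equals $0$, which still lies in neither open half-plane, so condition~\eqref{ax+b_item1} fails and $C^*(G_{\widetilde D})$ is AF-embeddable.

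Most of this is bookkeeping — identifying nilradicals, recognizing $H_n/Z$ and $G_{n,D}/Z$, and matching up spectral conditions. The two places where I expect to have to be careful are precisely the borderline-eigenvalue case in \eqref{Heis_item4}, where the hypothesis $\sigma(D)\cap\ie\RR\subseteq\{0\}$ is genuinely used, and keeping straight that the parity hypothesis ``$n$ odd'' enters only through $\dim G_{n,D}\in 4\ZZ$, which is what legitimizes applying the AF-embeddability half of Corollary~\ref{cf-cor8} via the $K$-theoretic vanishing in Theorem~\ref{cf-prop7}\eqref{cf-prop7_ii}.
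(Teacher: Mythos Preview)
Your proposal is correct and follows essentially the same route as the paper: Corollary~\ref{NC_cor1} for \eqref{Heis_item1}, Lemma~\ref{extra} for \eqref{Heis_item1.5}, Corollary~\ref{solv-4n+2} for \eqref{Heis_item3}, and the combination of Corollary~\ref{cf-cor8} with Example~\ref{ax+b} (via the identification $G_{n,D}/Z\simeq G_{\widetilde D}$) for \eqref{Heis_item2} and \eqref{Heis_item4}. Two minor remarks: your borderline-eigenvalue discussion in \eqref{Heis_item4} is harmless but unnecessary, since $\Re z_1\le 0$ already kills ``$\Re z>0$ for all $z$'' and $\Re z_2\ge 0$ kills ``$\Re z<0$ for all $z$'' without any appeal to $\sigma(D)\cap\ie\RR\subseteq\{0\}$; and Corollary~\ref{cf-cor8}\eqref{cf-cor7-ii} is stated without a dimension hypothesis, so you do not actually need to route the argument through Theorem~\ref{cf-prop7}\eqref{cf-prop7_ii} and the $4\ZZ$ condition to invoke it (the parity of $n$ matters only insofar as it distinguishes \eqref{Heis_item3} from \eqref{Heis_item4}).
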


\begin{proof}
Assertion \eqref{Heis_item1} follows from Corollary~\ref{NC_cor1},  \eqref{Heis_item1.5} from Lemma~\ref{extra}, 
while \eqref{Heis_item2} is a consequence of Corollary~\ref{cf-cor8}\eqref{cf-cor7-i} 
and Example~\ref{ax+b}.
Assertion \eqref{Heis_item3} follows from \eqref{Heis_item1.5} along with  Corollary~\ref{solv-4n+2}, while \eqref{Heis_item4} can be obtained using Corollary~\ref{cf-cor8}\eqref{cf-cor7-ii} and Example~\ref{ax+b}.
\end{proof}

\subsection{Two more classes of examples}

We start with a lemma  that is essentially a by-product of \cite{Sp88}.
We prove it here for completeness, as we do not have a reference for this very result, and it is needed for Example~\ref{N6N15}, via
Lemma~\ref{special}. 
We denote by $\mathcal{N}$ the class of separable nuclear $C^*$-algebras to which the universal coefficient theorem applies. 
(See for instance \cite{RoSc87}.)
The class~$\mathcal{N}$ contains the $C^*$-algebras of all simply connected solvable Lie groups, since they are obtained by iterated crossed products by actions of the group~$\RR$, starting from the 1-dimensional $C^*$-algebra. 

\begin{lemma}\label{embed}
Let $0\to\Ic\to\Ac\to\Ac/\Ic\to0$ be an exact sequence of $C^*$-algebras satisfying the following conditions: 
\begin{enumerate}[{\rm(i)}]
	\item\label{embed_item1} The $C^*$-algebra $\Ac/\Ic$ belongs to the class $\mathcal{N}$. 
	\item\label{embed_item2} The $C^*$-algebras $\Ic$ and $\Ac/\Ic$ are AF-embeddable. 
	\item\label{embed_item3} The index map $\delta_1\colon K_1(\Ac/\Ic)\to K_0(\Ic)$ vanishes. 
\end{enumerate}
Then the $C^*$-algebra $\Ac$ is AF-embeddable. 
\end{lemma}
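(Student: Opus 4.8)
The plan is to deduce the statement from Spielberg's analysis of AF-embeddings of $C^*$-algebra extensions in \cite{Sp88}; indeed, hypotheses \eqref{embed_item2} and \eqref{embed_item3} are exactly the shape of the $K$-theoretic input used there. The first step is cheap and purely $K$-theoretic. Writing $\varphi\colon\Ic\hookrightarrow\Ac$ for the inclusion, the six-term exact sequence attached to $0\to\Ic\to\Ac\to\Ac/\Ic\to0$ together with the vanishing of $\delta_1$ gives $\Ker K_0(\varphi)=\Im\delta_1=\{0\}$, so in particular $K_0(\Ic)^+\cap\Ker K_0(\varphi)=\{0\}$. Since $\Ic$ and $\Ac/\Ic$ are AF-embeddable they are stably finite, so Corollary~\ref{Sp88_Lemma1.5}\eqref{Sp88_Lemma1.5_item2} already shows that $\Ac$ is stably finite; moreover $\Ac$ is exact, being an extension of exact $C^*$-algebras. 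Thus the two evident necessary conditions for AF-embeddability hold, and the remaining task is to construct the embedding itself.

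For the embedding I would follow Spielberg's construction. Fix AF-embeddings $\Ic\hookrightarrow F_0$ and $\Ac/\Ic\hookrightarrow F_1$ with $F_0,F_1$ AF algebras. Because $\Ac/\Ic$ lies in $\mathcal{N}$ it is separable and nuclear, so the extension is semisplit and is classified, up to the appropriate stable equivalence, by a class in $KK^1(\Ac/\Ic,\Ic)$; by the UCT for $\Ac/\Ic$, the image of this class in $\Hom(K_1(\Ac/\Ic),K_0(\Ic))$ equals, up to sign, the index map $\delta_1$, which vanishes by \eqref{embed_item3}. This vanishing is precisely the input driving Spielberg's inductive scheme: it lets one write $\Ac$ as an increasing union of $C^*$-subalgebras, each of which, after a controlled perturbation made possible because the index obstruction to lifting unitaries from $\Ac/\Ic$ to $\Ac$ now vanishes, decomposes as a finite-dimensional piece coming from $F_1$ glued to a piece of $\Ic$ lying inside $F_0$, with gluing data that is $K$-theoretically trivial; passing to limits places $\Ac$ inside an AF algebra. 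Where the hypotheses of \cite[Thm.~1.15]{Sp88} apply verbatim after these reductions one simply cites it, and otherwise one must adapt Spielberg's argument, which is where the real work lies.

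The principal obstacle is that Spielberg's theorems are formulated for extensions whose ideal, or whose quotient, is an honest AF algebra (or under a $K$-theoretic hypothesis of somewhat different form), while here $\Ic$ and $\Ac/\Ic$ are only AF-embeddable: one cannot in general push the extension forward along $\Ic\hookrightarrow F_0$, nor pull it back along $\Ac/\Ic\hookrightarrow F_1$, to an extension of the same type, because $F_0$ is not an ideal and $\Ac/\Ic$ is not a quotient of $F_1$. The route I expect to take is to bypass the ideal/quotient structure of the ambient AF algebras altogether: extract from \cite{Sp88} the underlying approximation statement --- that data of the form \eqref{embed_item1}--\eqref{embed_item3} yield finite-dimensional approximations of $\Ac$ which, thanks to $\delta_1=0$ and the stable finiteness found in the first step, assemble into an embedding of $\Ac$ into an inductive limit of finite-dimensional algebras --- and verify that this construction uses only the chosen AF-embeddings of $\Ic$ and $\Ac/\Ic$ together with the vanishing of $\delta_1$, never the fact that these algebras sit as an ideal and a quotient inside $F_0$ and $F_1$. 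That bookkeeping is the technical heart of the proof.
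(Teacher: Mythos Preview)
Your first paragraph is fine and matches the paper's use of the six-term sequence, but it is not actually needed for the AF-embedding argument (stable finiteness is a consequence, not an input). The real issue is in your third paragraph: you correctly identify the obstacle that Spielberg's results in \cite{Sp88} are stated for extensions with an honest AF ideal rather than a merely AF-embeddable one, but your proposed workaround --- ``extract the underlying approximation statement'' and verify by ``bookkeeping'' that only the AF-embeddings are used --- is not a proof, and in fact the internal machinery of \cite{Sp88} does use the AF structure of the ideal in an essential way (e.g.\ in constructing partial isometries inside it). So as written this is a genuine gap.

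The paper closes this gap not by reworking Spielberg's argument but by a concrete two-step reduction that lets one cite \cite[Lemma~1.13]{Sp88} verbatim. First, using \cite[Lemma~1.12]{Sp88}, one replaces the given extension by one in which $\Ic\otimes\Kc$ sits as an \emph{essential} ideal of a larger algebra $\Ac'$ containing $\Ac$, with the same quotient $\Ac/\Ic$ and with $\delta_1$ still vanishing by naturality. Second --- and this is the step you are missing --- one does push the extension forward along an AF-embedding, contrary to your claim that this cannot be done: embed $\Ic\otimes\Kc$ into an AF-algebra $\widetilde{\Jc}$, let $\Jc$ be the hereditary sub-$C^*$-algebra of $\widetilde{\Jc}$ generated by $\Ic\otimes\Kc$, and note that $\Jc$ is again AF by \cite{Ell76} while the inclusion $\Ic\otimes\Kc\hookrightarrow\Jc$ is approximately unital. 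Then \cite[Rem.~1.11]{Sp88} yields an extension $0\to\Jc\to\Ac'+\Jc\to\Ac/\Ic\to0$ whose index map still vanishes and whose ideal is now genuinely AF; \cite[Lemma~1.13]{Sp88} applies directly (here is where $\Ac/\Ic\in\mathcal{N}$ is used), giving that $\Ac'+\Jc$ is AF-embeddable, and $\Ac\hookrightarrow\Ac'\hookrightarrow\Ac'+\Jc$ finishes the job. The hereditary-subalgebra trick is the missing idea.
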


\begin{proof}
Step 1 (reducing to essential ideals): By \cite[Lemma 1.12]{Sp88} and its proof we obtain a  $C^*$-algebra $\Ac'$ that fits in a commutative diagram 
$$\xymatrix{
\Ic \ar[r] \ar@{^{(}->}[d] & \Ac\ar[r] \ar@{^{(}->}[d] & \Ac/\Ic \ar[d] \\
\Ic\otimes\Kc \ar[r] & \Ac'\ar[r] & \Ac/\Ic
}
$$
where $\Ic\otimes\Kc$ embeds as an essential ideal of $\Ac'$, 
the first two vertical arrows give rise to group isomorphisms $K_*(\Ic)\simeq K_*(\Ic\otimes\Kc)$ ($\simeq K_*(\Ic)$) and $K_*(\Ac)\simeq K_*(\Ac')$, while the right-most vertical arrow is an automorphism of $\Ac/\Ic$. 
It then follows by the hypothesis~\eqref{embed_item3} along with the naturality of the index map (cf. \cite[Prop. 9.1.5]{RLL00}) 
that the index map $\delta_1\colon K_1(\Ac/\Ic)\to K_0(\Ic\otimes\Kc)$  of the bottom horizontal line in the above diagram vanishes. 

Step 2 (reducing to AF ideals): 
Since $\Ic$ is AF-embeddable, it follows that $\Ic\otimes\Kc$ is AF-embeddable, hence there exists an embedding $\Ic\otimes\Kc\hookrightarrow\widetilde{\Jc}$, where $\widetilde{\Jc}$ is an AF-algebra. 
Let $\Jc$ be the hereditary sub-$C^*$-algebra of $\widetilde{\Jc}$ generated by $\Ic\otimes\Kc$. 
Since $\widetilde{\Jc}$ is an AF-algebra, it then follows by \cite[Th. 3.1]{Ell76} that $\Jc$ is an AF-algebra. 
On the other hand $\Jc=\{bcb\mid 0\le b\in\Ic\otimes\Kc,\ c\in\widetilde{\Jc}\}$ by \cite[Cor. II.5.3.9]{Bl06}, 
which directly implies that every approximate unit of $\Ic\otimes\Kc$ 
is an approximate unit for~$\Jc$, too. 
That is, the embedding $\Ic\otimes\Kc\hookrightarrow\Jc$ is approximately unital in the sense of \cite[Def. 1.10]{Sp88}. 
Therefore we may use \cite[Rem. 1.11]{Sp88} 
to obtain a commutative diagram 
$$\xymatrix{
	\Ic\otimes\Kc \ar[r] \ar@{^{(}->}[d] & \Ac'\ar[r] \ar@{^{(}->}[d] & \Ac/\Ic \ar[d] \\
	\Jc \ar[r] & \Ac'+\Jc\ar[r] & \Ac/\Ic
}
$$
where the right-most vertical arrow is an automorphism of $\Ac/\Ic$. 
Since the index map $\delta_1\colon K_1(\Ac/\Ic)\to K_0(\Ic\otimes\Kc)$ 
of the upper line vanishes by Step~1, 
it then follows by the naturality of the index map 
that the index map $\delta_1\colon K_1(\Ac/\Ic)\to K_0(\Jc)$  of the bottom horizontal line in the above diagram vanishes as well. 
Now, since we have seen above that $\Jc$ is an AF-algebra, it follows by \cite[Lemma 1.13]{Sp88} applied to the short exact sequence 
$0\to\Jc \to  \Ac'+\Jc\to  \Ac/\Ic\to0$ 
that $\Ac'+\Jc$ is AF-embeddable. 
(The $C^*$-algebra $\Ac'+\Jc$ belongs to the class~$\mathcal{N}$ by the two-out-of-three property of that class mentioned in \cite[V.1.5.4]{Bl06}, so all the hypotheses of \cite[Lemma 1.13]{Sp88} are satisfied as stated.)

Step 3: The above Steps 1--2 give the embeddings $\Ac\hookrightarrow\Ac'\hookrightarrow\Ac'+\Jc$ 
along with the fact that $\Ac'+\Jc$ is AF-embeddable, 
hence $\Ac$ is AF-embeddable as well. 
\end{proof}

\begin{lemma}\label{free_L1}
Let $\ng$ be a nilpotent Lie algebra with its centre $\zg$, 
and assume that $D\in\Der(\ng)$. 
If there exists $\xi\in\ng^*\setminus\zg^\perp$ 
with $\xi\circ(\exp D)\in \Oc_\xi$, then  
$\sigma(D\vert_\zg)\cap 2\pi\ie \ZZ\ne\emptyset$. 
\end{lemma}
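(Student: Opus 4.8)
The plan is to localize the hypothesis to the centre $\zg$, where both the coadjoint action and the derivation act transparently, and then to read off the conclusion from the spectral mapping theorem.

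First I would record two elementary facts. Since $D$ is a derivation, it preserves $\zg$: for $X\in\zg$ and any $Y\in\ng$ one has $[DX,Y]=D[X,Y]-[X,DY]=0$, so $DX\in\zg$. Hence $D_0:=D\vert_\zg\in\End(\zg)$ is well defined and $(\exp D)\vert_\zg=\exp(D_0)$. Secondly, writing $N:=\exp\ng$ and $\Oc_\xi=\{\Ad^*(n)\xi\mid n\in N\}$ for the coadjoint orbit of $\xi$, the automorphism $\Ad(n)$ fixes every element of $\zg$ pointwise, hence $(\Ad^*(n)\xi)\vert_\zg=\xi\vert_\zg$ for every $n\in N$.

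Next I would restrict the hypothesis to $\zg$. By assumption there is $n\in N$ with $\xi\circ(\exp D)=\Ad^*(n)\xi$ in $\ng^*$; restricting this equality to $\zg$ and using the two facts above yields $\xi\vert_\zg\circ\exp(D_0)=\xi\vert_\zg$ in $\zg^*$, that is, $\xi\vert_\zg\circ(\exp(D_0)-\id_\zg)=0$. Since $\xi\notin\zg^\perp$ the functional $\xi\vert_\zg$ is nonzero, so $\exp(D_0)-\id_\zg$ is not surjective, hence not invertible, i.e. $1\in\sigma(\exp(D_0))$.

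Finally, by the spectral mapping theorem $\sigma(\exp(D_0))=\{\ee^{\lambda}\mid\lambda\in\sigma(D_0)\}$ (all spectra taken over $\CC$), so $1\in\sigma(\exp(D_0))$ forces some $\lambda\in\sigma(D_0)=\sigma(D\vert_\zg)$ with $\ee^{\lambda}=1$, that is, $\lambda\in2\pi\ie\ZZ$, which is the assertion. There is no real obstacle in this argument; the only points requiring a little care are the bookkeeping for the coadjoint action --- the conclusion is in any case unaffected by replacing $D$ with $-D$, since $2\pi\ie\ZZ=-2\pi\ie\ZZ$ --- and the convention that $\sigma(D\vert_\zg)$ denotes the spectrum of the complexification, so that the spectral mapping theorem applies verbatim.
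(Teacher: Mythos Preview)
Your proof is correct and follows essentially the same route as the paper: restrict the orbit condition to $\zg$, use that the coadjoint action is trivial there and that $D$ preserves $\zg$, deduce that $\xi\vert_\zg$ is fixed by the dual of $\exp(D\vert_\zg)$, and finish with the spectral mapping theorem. Your write-up is in fact a bit more explicit than the paper's about why $D(\zg)\subseteq\zg$ and why $\exp(D_0)-\id_\zg$ fails to be invertible.
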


\begin{proof}
By hypothesis, there exists $X\in\gg$ with  $\xi\circ(\exp D)=\xi\circ\exp(\ad_\gg X)\in\gg^*$. 
This implies 
$\xi\circ(\exp D)\vert_{\zg}=\xi\circ\exp(\ad_\gg X)\vert_\zg\in\zg^*$. 
For every $Y\in\zg$ we have $\exp(\ad_\gg X)Y=Y$ 
and on the other hand,  since  $D$ is a derivation, $D(\zg)\subseteq\zg$. 
Therefore $\xi\circ \exp(D\vert_\zg)=\xi\vert_\zg$. 
Since $\xi\in\ng^*\setminus\zg^\perp$, that is, $\xi\vert_\zg\ne0$, 
we then obtain $1\in\spec(\exp(D\vert_\zg))$. 
Therefore, by the spectral mapping theorem, there exists $w\in\sigma(D\vert_\zg)$ with $\exp w=1$, that is, $w\in2\pi\ie \ZZ$. 
\end{proof}

\begin{lemma}\label{free_L2}
Let  $\ng$ be a nilpotent Lie algebra with its centre $\zg$, 
and $X:=(\ng^*\setminus\zg^\perp)/N$ endowed with its quotient topology. 
Then for arbitrary $D\in\Der(\ng)$ the map
$$\alpha\colon X\times \RR\to X,\quad 
(\Oc_\xi,t)\mapsto \alpha_t(\Oc_\xi):=(\exp(tD))^*(\Oc_\xi)=\Oc_{\xi\circ\exp(tD)}.$$
is well defined and is a continuous group action. 
Moreover, 
\begin{enumerate}[\rm (i)]
\item\label{free_L2_i} if
$\sigma(D\vert_\zg)\cap \ie \RR=\emptyset$, 
then the group action $\alpha$ is free; 
\item\label{free_L2_ii} if $X$ is Hausdorff and 
 there exists $\epsilon \in \{-1, 1\}$ such that 
$\epsilon \Re\, z> 0$ for every $z\in \sigma(D\vert_\zg)$,  
 the 
 action 
$\alpha$  is proper.
\end{enumerate}
\end{lemma}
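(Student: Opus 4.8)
The plan is to treat the three assertions in turn, pushing the substantive content onto elementary facts about linear flows on $\zg^*$. \textbf{Well-definedness, group action, continuity.} Since $D$ is a derivation, $D(\zg)\subseteq\zg$, so each $\exp(tD)\in\Aut(\ng)=\Aut(N)$ preserves $\zg$ and restricts there to the invertible operator $\exp(tD\vert_\zg)$; in particular $\exp(tD)^*$ maps $\ng^*\setminus\zg^\perp$ onto itself. From $\Ad(\exp(tD)(n))=\exp(tD)\circ\Ad(n)\circ\exp(-tD)$ one gets $\exp(tD)^*(\Ad^*(n)\xi)=\Ad^*(\exp(-tD)(n))(\xi\circ\exp(tD))$, hence $\exp(tD)^*(\Oc_\xi)=\Oc_{\xi\circ\exp(tD)}$, so $\alpha$ is well defined. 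It is a group action because the operators $\exp(tD)$ commute, so $t\mapsto\exp(tD)^*$ is a one-parameter group. Continuity follows from continuity of $(\xi,t)\mapsto\xi\circ\exp(tD)$ on $(\ng^*\setminus\zg^\perp)\times\RR$, together with the fact that the orbit map $q\colon\ng^*\setminus\zg^\perp\to X$, hence also $q\times\id_\RR$, is open, so both are topological quotient maps.

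\textbf{Part (i).} Suppose $\alpha_t(\Oc_\xi)=\Oc_\xi$ with $\xi\in\ng^*\setminus\zg^\perp$, i.e. $\xi\circ\exp(tD)\in\Oc_\xi$. Applying Lemma~\ref{free_L1} to the derivation $tD\in\Der(\ng)$ gives $t\,\sigma(D\vert_\zg)\cap 2\pi\ie\ZZ\ne\emptyset$. If $t\ne0$, this forces some $w\in\sigma(D\vert_\zg)$ with $tw\in 2\pi\ie\ZZ\subseteq\ie\RR$, hence $w\in\ie\RR$, contradicting $\sigma(D\vert_\zg)\cap\ie\RR=\emptyset$. So $t=0$; thus every isotropy subgroup is trivial and $\alpha$ is free.

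\textbf{Part (ii).} Replacing $D$ by $-D$ if necessary — this replaces $\alpha$ by $t\mapsto\alpha_{-t}$ and does not affect properness — we may assume $\Re z>0$ for all $z\in\sigma(D\vert_\zg)$. Recall the standard criterion: for a locally compact group $H$ acting continuously on a locally compact Hausdorff space $Y$, the action is proper iff $\{h\in H\mid hK\cap L\ne\emptyset\}$ is relatively compact for all compact $K,L\subseteq Y$. First, $X$ is locally compact Hausdorff: it is Hausdorff by hypothesis, and $q$ is an open surjection from a locally compact Hausdorff space, hence carries compact neighbourhoods to compact neighbourhoods. Because $N$ acts trivially on $\zg$, the map $r\colon X\to\zg^*\setminus\{0\}$, $\Oc_\xi\mapsto\xi\vert_\zg$, is well defined and continuous (it factors $\xi\mapsto\xi\vert_\zg$ through $q$), and it intertwines $\alpha$ with the linear flow $\beta_t(\eta):=\eta\circ\exp(tD\vert_\zg)$ on $\zg^*\setminus\{0\}$. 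The generator $B$ of $\beta$ has the same eigenvalues as $D\vert_\zg$, all with positive real part, so standard estimates on the matrix exponential give $C\ge1$, $a>0$ with $\Vert \ee^{-sB}\Vert\le C\ee^{-as}$ for $s\ge0$; hence $\Vert\beta_t\eta\Vert\ge C^{-1}\ee^{at}\Vert\eta\Vert$ for $t\ge0$ and $\Vert\beta_t\eta\Vert\le C\ee^{-a\vert t\vert}\Vert\eta\Vert$ for $t\le0$. Any compact subset of $\zg^*\setminus\{0\}$ lies in an annulus $\{r_1\le\Vert\eta\Vert\le r_2\}$ with $0<r_1\le r_2$, so these estimates bound $\{t\mid\beta_t(K)\cap L\ne\emptyset\}$ for compact $K,L\subseteq\zg^*\setminus\{0\}$; thus $\beta$ is proper. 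Finally, for compact $K,L\subseteq X$ the sets $r(K),r(L)$ are compact in $\zg^*\setminus\{0\}$ and $\{t\mid\alpha_t(K)\cap L\ne\emptyset\}\subseteq\{t\mid\beta_t(r(K))\cap r(L)\ne\emptyset\}$ is relatively compact, so by the criterion $\alpha$ is proper.

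\textbf{Main obstacle.} The only genuine work is Part (ii): spotting the equivariant restriction-to-the-centre map $r$, deducing properness of the resulting linear flow on $\zg^*\setminus\{0\}$ from the sign condition on $\sigma(D\vert_\zg)$, and transporting properness back to $X$ — the last step being exactly where the Hausdorff hypothesis, through local compactness of $X$, is used. Well-definedness, the group-action axioms, continuity, and Part (i) are routine transport-of-structure arguments.
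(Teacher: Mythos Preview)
Your proof is correct and follows essentially the same approach as the paper: well-definedness and continuity via the quotient map, Part~(i) via Lemma~\ref{free_L1}, and Part~(ii) by passing to the restriction $\Oc_\xi\mapsto\xi\vert_\zg$ and exploiting the spectral hypothesis on $D\vert_\zg$ to control the induced flow on $\zg^*\setminus\{0\}$. The only cosmetic difference is that you use the compact-set criterion for properness and name the equivariant map $r$ explicitly, whereas the paper uses the equivalent sequential criterion (via \cite[Lemma~3.42]{Wi07}) and carries out the same restriction argument inline.
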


\begin{proof}
	In order to check the equality 
	\begin{equation}\label{free_L2_proof_eq1}
	(\exp(tD))^*(\Oc_\xi)=\Oc_{\xi\circ\exp(tD)}
	\end{equation}
we use that for every $Y\in \ng$ and every $\gamma\in\Aut(\ng)$ one has $\gamma\circ \exp(\ad_\ng Y)\circ\gamma^{-1}=\exp(\ad_\ng \gamma(Y))$. 
Therefore, for $\gamma:=\exp(tD)^{-1}$, 
$$\xi\circ\exp(\ad_\ng Y)\circ \gamma^{-1}
=\xi\circ\gamma^{-1}\circ \exp(\ad_\ng \gamma(Y))\in\Oc_{\xi\circ\gamma^{-1}}.$$
Since the mapping $\gamma\colon\ng\to\ng$ is bijective, 
we then directly obtain~\eqref{free_L2_proof_eq1}. 

It is clear that $\alpha$ is a group action of the abelian group $(\RR,+)$, and its continuity follows by the commutative diagram 
$$\xymatrix{
(\ng^*\setminus\zg^\perp)\times\RR \ar[d]_{q\times\id_{\RR}} \ar[r]& \ng^*\setminus\zg^\perp \ar[d]^{q} \\
X \times\RR \ar[r]^{\alpha} & X
}$$
where $q\colon \ng^*\setminus\zg^\perp\to X$, $q(\xi):=\Oc_\xi$ is the quotient map defined by the coadjoint action of $N$, 
while the upper horizontal arrow is defined by $(\xi,t)\mapsto\xi\circ\exp(tD)$ and is clearly continuous.

\eqref{free_L2_i}  Assume that the group action $\alpha$ is not free, 
that is, there exist $t\in\RR^\times$ and $\xi\in \ng^*\setminus\zg^\perp$ 
with $\alpha_t(\Oc_\xi)=\Oc_\xi$. 
By \eqref{free_L2_proof_eq1}, we then have $\Oc_{\xi\circ\exp(tD)}=\Oc_\xi$, 
that is, $\xi\circ\exp(tD)\in\Oc_\xi$. 
Then Lemma~\ref{free_L1}  shows that  $\sigma(tD\vert_\zg)\cap 2\pi\ie \ZZ\ne\emptyset$, in particular $\sigma(D\vert_\zg)\cap \ie \RR\ne\emptyset$.  

\eqref{free_L2_ii} 
$X$ is a locally compact Hausdorff space.

Without losing the generality we assume that $\Re\, z > 0$ for every $z\in \sigma(D\vert_\zg)$. 
Let $\xi,\eta\in  \ng^*\setminus \zg^{\perp}$, $ (\xi_j)_{j\ge 1}$ be a sequence in 
 $\ng^*\setminus \zg^\perp $, and $(t_j)_{j\ge1}$ be a sequence in~$\RR$ such that 
 $\Oc_{\xi_j} \to \Oc_\xi $ 
and    $\alpha_{t_j}(\Oc_{\xi_j})\to \Oc_\eta $  in~$X$. 

Assume that $(t_j)_{j\ge 1}$ has no  limit points, hence it is not bounded. 
  It follows that there is a subsequence $(t_{j_k})_{k\ge 1}$ such that $t_{j_k} \to +\infty$ or $t_{j_k} \to -\infty$.
  Since $\Oc_{\xi_j} \to \Oc_\xi $ in $X$, there is $\xi'_j \in  \Oc_{\xi_j} $ such that 
  $\xi'_j \to \xi$, thus $\xi'_j \vert_{\zg}= \xi_j\vert_{\zg} \to \xi\vert_{\zg}$. 
  Similarly, since $\alpha_{t_j}(\Oc_{\xi_j})= \Oc_{\xi_j \circ \ee^{t_j D}}\to \Oc_\eta $ in  $X$, 
  it follows that $\xi_j \circ \ee^{t_j D}\vert_{\zg} \to \eta\vert_{\zg}$. 
  Assume that $t_{j_k}\to -\infty$. 
  Then  $\xi_{j_k} \circ \ee^{t_{j_k} D}\vert_{\zg} \to  0$ and we get that $\eta\vert_{\zg}=0$. 
  This is not possible since $\eta \in \ng^* \setminus \zg^\perp$. 
  If  $t_{j_k}\to +\infty$,
then $\xi_{j_k} \circ \ee^{t_{j_k} D}\vert_{\zg}\to +\infty$. 
This is again impossible since $\xi_{j_k} \circ \ee^{t_{j_k} D}\vert_{\zg}\to \eta\vert_\zg$. 
Therefore the sequence $(t_j)_{j\ge1}$ must have a limit point, hence the action $\alpha$ is proper. 
(See \cite[Lemma~3.42]{Wi07}.)
\end{proof}

For Lemma~\ref{special} below, we recall that if $\Vc$ is a finite-dimensional real vector space and $T\in\End(\Vc)$, then $T$ is called diagonalizable if the complexified space $\Vc_\CC:=\CC\otimes_\RR\Vc$ has a basis consisting of eigenvectors of $T_\CC:=\id_{\CC}\otimes T\in\End_\CC(\Vc_\CC)$, 
that is, one has the direct sum decomposition $\Vc_\CC=\bigoplus\limits_{\lambda\in\sigma(T)}\Vc_\CC(\lambda)$, 
where $\Vc_\CC(\lambda):=\Ker(T_\CC-\lambda\id_{\Vc_\CC})$ 
for any $\lambda\in\sigma(T):=\sigma(T_\CC)$. 
If this is the case and $\Yc\subseteq\Vc$ is an invariant subspace for $T$, then it is straightforward to check that 
\begin{equation}\label{reducing}
\Yc_\CC=\bigoplus\limits_{\lambda\in\sigma(T)}(\Yc_\CC\cap\Vc_\CC(\lambda))
\end{equation} 
and thus $T\vert_\Yc\in\End(\Yc)$ is diagonalizable. 
Moreover, selecting any $\CC$-linear subspace $\Xc_\lambda\subseteq \Vc_\CC(\lambda)$ with $\Vc_\CC(\lambda)=\Xc_\lambda\dotplus(\Yc_\CC\cap\Vc_\CC(\lambda))$ for every $\lambda \in\sigma(T)$, and defining 
$\Xc:=\bigoplus\limits_{\lambda\in\sigma(T)}\Xc_\lambda$, 
we obtain a $\CC$-linear subspace $\Xc\subseteq\Vc_\CC$ 
that is invariant to $T_\CC$ and satisfies $\Vc_\CC=\Xc\dotplus\Yc_\CC$. 
Moreover, defining $\widetilde{T}\in\End(\Vc/\Yc)$, $v+\Yc\mapsto Tv+\Yc$, 
we have $\CC$-linear isomorphisms 
$(\Vc/\Yc)_\CC\simeq \Vc_\CC/\Yc_\CC\simeq \Xc$ that intertwine the operators 
$\widetilde{T}_\CC\in\End_\CC((\Vc/\Yc)_\CC)$ and $T_\CC\vert_\Xc\in\End_\CC(\Xc)$, 
hence
\begin{equation}\label{quot}
\sigma(\widetilde{T})=\sigma(\widetilde{T}_\CC)=\sigma(T_\CC\vert_\Xc).
\end{equation}

\begin{lemma}\label{special}
Let $H$ be a nilpotent Lie group with its Lie algebra $\hg$ and $\zg$ the centre of $\hg$.
Assume the following conditions hold:
\begin{enumerate}[{\rm (i)}]
\item\label{cond_i}  
$[\hg, \hg]=\zg$ and $\dim(\hg/\zg)\in2\ZZ+1$.
\item\label{cond_ii} The  non-trivial coadjoint orbits of $H$ have the same dimension $d$, that is, there is 
$d\in 2\ZZ$ such that 
$$ \hg^*/H= (\hg^*/H)_d \sqcup [\hg, \hg]^{\perp}.$$
Here $(\hg^*/H)_d$ denotes the space of coajoint orbits of $H$ of dimension $d$. 
\end{enumerate}
If $D\in \Der(\hg)$ is diagonalizable, satisfies $\sigma(D)\cap\ie\RR\subseteq\{0\}$, and there exist $z_1, z_2 \in \sigma(D)$  with $(\Re\, z_1) (\Re\, z_2) \le 0$, 
then $C^*(H \rtimes_D \RR)$ is AF-embeddable. 
\end{lemma}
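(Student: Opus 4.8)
The plan is to split the argument according to the real parts of the eigenvalues of $D\vert_\zg$, combining Corollary~\ref{NC_cor1}, Example~\ref{ax+b} and Lemma~\ref{embed}. Write $G:=H\rtimes_D\RR$, $V:=\hg/\zg$, and let $\widetilde D\in\End(V)$ be the operator induced by $D$; since $\sigma(D)\cap\ie\RR\subseteq\{0\}$, $G$ is an exponential solvable Lie group. As $\zg$ is $D$-invariant and $D$ is diagonalizable, \eqref{reducing} and \eqref{quot} show that $D\vert_\zg$ and $\widetilde D$ are diagonalizable and $\sigma(D)=\sigma(D\vert_\zg)\cup\sigma(\widetilde D)$. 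By condition~\eqref{cond_i} the algebra $\hg$ is $2$-step nilpotent with $\zg=[\hg,\hg]$; picking a $\widetilde D$-eigenbasis of $V$ lifted to vectors $v_1,\dots\in\hg$ with eigenvalues $\lambda_1,\dots$, one checks that each nonzero bracket $[v_i,v_j]$ is a $D$-eigenvector with eigenvalue $\lambda_i+\lambda_j$ and that these brackets span $\zg$, so
$$\emptyset\ne\sigma(D\vert_\zg)\subseteq\{\lambda_i+\lambda_j\mid [v_i,v_j]\ne0\}.$$
If there exist $w_1,w_2\in\sigma(D\vert_\zg)$ with $\Re w_1\le0\le\Re w_2$, then $D$ satisfies hypotheses~\eqref{NC_eq1}--\eqref{NC_eq2} of Proposition~\ref{NC}, so Corollary~\ref{NC_cor1} gives at once that $C^*(G)$ is AF-embeddable. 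So from now on assume the contrary, that is, $\sigma(D\vert_\zg)$ lies entirely in $\{\Re>0\}$ or in $\{\Re<0\}$; replacing $D$ by $-D$ (which only replaces $G$ by the isomorphic group $H\rtimes_{-D}\RR$ and leaves all hypotheses intact) we may assume $\sigma(D\vert_\zg)\subseteq\{\Re>0\}$.

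In this remaining case I first claim that $\sigma(\widetilde D)$ meets both $\{\Re>0\}$ and $\{\Re\le0\}$. Indeed, the displayed inclusion (together with $\sigma(D\vert_\zg)\ne\emptyset$) produces $i,j$ with $[v_i,v_j]\ne0$ and $\Re\lambda_i+\Re\lambda_j>0$, so some $\lambda_i$ has positive real part; and the hypothesis supplies $z_1\in\sigma(D)$ with $\Re z_1\le0$, which, since $\sigma(D\vert_\zg)\subseteq\{\Re>0\}$, must belong to $\sigma(\widetilde D)$. Consequently the generalized $ax+b$-group $G_{\widetilde D}:=V\rtimes_{\widetilde D}\RR$ has AF-embeddable $C^*$-algebra by Example~\ref{ax+b}, and since $\dim G_{\widetilde D}=\dim V+1$ is even, $K_1(C^*(G_{\widetilde D}))=\{0\}$ by Corollary~\ref{signs_solvable}.

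Now consider the canonical short exact sequence $0\to\Ic\to C^*(H)\to C^*(H/Z)\to0$ attached to the centre $Z=\exp\zg$ of $H$ (note $H/Z$ is abelian, since $[\hg,\hg]=\zg$; cf.~\eqref{cf-0}); its kernel $\Ic$ is $D$-invariant and has spectrum $\widehat\Ic=(\hg^*/H)_d$ by condition~\eqref{cond_ii} and Kirillov theory. Crossing with $\RR$ and using $C^*(N)\rtimes\RR\cong C^*(N\rtimes\RR)$ yields
$$0\longrightarrow\Ic\rtimes_D\RR\longrightarrow C^*(G)\longrightarrow C^*(G_{\widetilde D})\longrightarrow0.$$
Granting that $\Ic\rtimes_D\RR$ is AF-embeddable, Lemma~\ref{embed} applies (its hypothesis~\eqref{embed_item1} holds as $C^*(G_{\widetilde D})\in\mathcal{N}$, \eqref{embed_item2} by the previous paragraph, \eqref{embed_item3} since $K_1(C^*(G_{\widetilde D}))=\{0\}$), and $C^*(G)$ is AF-embeddable, as desired. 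To see that $\Ic\rtimes_D\RR$ is AF-embeddable, observe that the $\RR$-action on $\widehat\Ic$ induced by $D$ is, via Kirillov's homeomorphism, the action $\alpha$ of Lemma~\ref{free_L2}: it is free because $\sigma(D\vert_\zg)\cap\ie\RR=\emptyset$, and proper because $\Re z>0$ for every $z\in\sigma(D\vert_\zg)$ and $\widehat\Ic$ is Hausdorff; hence $\Prim(\Ic\rtimes_D\RR)$ is homeomorphic to the orbit space $\widehat\Ic/\RR$. Restriction to $Z$ exhibits $\widehat\Ic$ as a rank-$e$ vector bundle over $\zg^*\setminus\{0\}$, with $e:=\dim V-d$, intertwining the flows $\exp(tD)$ and $\exp(tD^*\vert_\zg)$; since all eigenvalues of $D^*\vert_\zg$ have positive real part, $(\zg^*\setminus\{0\})/\RR$ is a sphere $S^{\dim\zg-1}$, so $\widehat\Ic/\RR$ is the total space of a rank-$e$ vector bundle over that sphere. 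Because $\dim V$ is odd and $d$ is even, $e\ge1$, so every connected component of $\widehat\Ic/\RR$ is a non-compact locally compact Hausdorff space and contains no nonempty quasi-compact open subset; as $\Ic\rtimes_D\RR$ is separable and nuclear, \cite[Cor.~B]{Ga20} shows it is AF-embeddable.

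I expect the main obstacle to be the properness step: one must know that $\widehat\Ic=(\hg^*/H)_d$ is a locally compact \emph{Hausdorff} space and that the primitive ideal space of $\Ic\rtimes_D\RR$ is genuinely the orbit space $\widehat\Ic/\RR$ rather than only a quasi-orbit space. This rests on the structure theory of $2$-step nilpotent groups all of whose nonzero coadjoint orbits have the same dimension, and will have to be spelled out carefully (including the identification of $\widehat\Ic$ as a vector bundle over $\zg^*\setminus\{0\}$). The eigenvalue bookkeeping in the second case — deducing the $ax+b$-condition on $\sigma(\widetilde D)$ from the sign hypothesis on $\sigma(D)$ by way of the Leibniz relation $\sigma(D\vert_\zg)\subseteq\{\lambda_i+\lambda_j\}$ — is elementary but is the conceptual reason the hypothesis is exactly the right one.
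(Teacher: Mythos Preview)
Your proposal is correct and follows essentially the same architecture as the paper: the same case split on $\sigma(D\vert_\zg)$, the same eigenvalue bookkeeping via $\sigma(D\vert_\zg)\subseteq\{\lambda_i+\lambda_j\}$ to force $\sigma(\widetilde D)$ to straddle the imaginary axis, the same short exact sequence $0\to\Ic\rtimes\RR\to C^*(G)\to C^*(G_{\widetilde D})\to0$, and the same appeal to Lemma~\ref{embed} with vanishing index map coming from $\dim(\hg/\zg)$ odd.

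The one substantive difference is how you show $\Ic\rtimes_D\RR$ is AF-embeddable. The paper argues abstractly: since $\hg$ is 2-step nilpotent with only flat orbits, $\Ic$ has continuous trace by \cite[Lemma~6.8]{BBL17}; then Lemma~\ref{free_L2} gives a free proper action on the Hausdorff space $\widehat\Ic$, so by \cite[Th.~1.1(3)]{RaRo88} the crossed product $\Ic\rtimes_D\RR$ again has continuous trace, and \cite[Lemma~2.6]{Sp88} finishes. You instead try to compute $\Prim(\Ic\rtimes_D\RR)\simeq\widehat\Ic/\RR$ explicitly as (the total space of) a rank-$e$ bundle over a sphere, observe $e\ge1$ forces non-compact components, and invoke \cite[Cor.~B]{Ga20}. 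Your geometric description of $\widehat\Ic$ as an affine bundle over $\zg^*\setminus\{0\}$ is correct and in fact gives an independent proof that $\widehat\Ic$ is Hausdorff (answering your own worry); but note that to pass from ``free and proper'' to the homeomorphism $\Prim(\Ic\rtimes_D\RR)\simeq\widehat\Ic/\RR$ you still need something like the Raeburn--Rosenberg machinery, i.e., essentially that $\Ic$ has continuous trace. Once you grant that, the paper's two-line route via \cite{RaRo88} and \cite[Lemma~2.6]{Sp88} is shorter than analyzing the orbit space and appealing to Gabe, so you may wish to adopt it.
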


\begin{proof}
Assume first that there are $z_1, z_2 \in \sigma(D\vert _{\zg})$  such that $(\Re z_1) (\Re z_2) \le 0$. 
Then $C^*(H\rtimes_D \RR)$ is AF-embeddable, by Proposition~\ref{NC}, and this proves the lemma in this case.

Next assume that $\epsilon\in \{-1, 1\}$ such that  
\begin{equation}\label{roots_1}
\sigma(D \vert_{\zg}) \subseteq \epsilon (0, \infty) +\ie \RR. 
\end{equation}
Define $\widetilde{D}\in\Der(\hg /\zg)$, $X+\zg\mapsto D(X)+\zg$. 
We claim that 
\begin{equation}\label{add}
\sigma(D\vert_\zg)\subseteq\{z+w\mid z,w\in\sigma(\widetilde{D})\}.
\end{equation}
For proving \eqref{add}, we use the remarks preceding the statement of the present lemma with $\Vc:=\hg$, $\Yc:=\zg$, and $T:=D$.  
We obtain $\hg_\CC=\Xc\dotplus\zg_\CC$ hence the hypothesis 
$\zg=[\hg,\hg]=\zg$ implies $\zg_\CC=[\hg_\CC,\hg_\CC]=[\Xc,\Xc]$. 
Since $D_\CC\in\End_\CC(\hg_\CC)$ is a derivation of the complex Lie algebra $\hg_\CC$, for any $\lambda_1,\lambda_2\in\sigma(D)$ we have $[\hg_\CC(\lambda_1),\hg_\CC(\lambda_2)]\subseteq\hg_\CC(\lambda_1+\lambda_2)$. 
Since $\Xc=\bigoplus\limits_{\lambda\in\sigma(D_\CC\vert_\Xc)}\Xc_\lambda$
and $\zg_\CC=[\Xc,\Xc]$, we then obtain 
$$\zg_\CC\subseteq\bigoplus_{\lambda_1,\lambda_1\in \sigma(D_\CC\vert_\Xc)}
\hg_\CC(\lambda_1+\lambda_2)$$
and then, by~\eqref{reducing}, 
$$\zg_\CC=\bigoplus_{\lambda_1,\lambda_1\in \sigma(D_\CC\vert_\Xc)}
\zg_\CC\cap\hg_\CC(\lambda_1+\lambda_2).$$
Now, since $\sigma(\widetilde{D})=\sigma(D_\CC\vert_\Xc)$ by \eqref{quot}, 
we directly obtain~\eqref{add}.

Then~\eqref{roots_1} and \eqref{add} imply that there exist
\begin{equation}\label{roots_2}
z_1, z_2 \in \sigma(\widetilde{D}) \quad \text{such that} \; \; \Re\, z_1\le 0\le \Re\, z_2.
\end{equation}
Then there is a short exact sequence
$$ 0 \rightarrow \Ic \rightarrow  C^*(H)\rightarrow C^*(H/Z)\rightarrow 0, 
$$
where $\widehat{\Ic} = (\hg^*/H)_d$ and $Z=\exp_H \zg$ is the centre of $H$.
Since $\hg$ is two step nilpotent,  hence  it has only flat orbits, it follows from \cite[Lemma~6.8]{BBL17} that $\Ic$ has continuous trace 
hence $\widehat{\Ic}$ is Hausdorff. 
Since $ C^*(H/Z)$ is invariant under automorphisms of $H$, $\Ic$ is $\Aut(H)$ invariant as well. 
We  thus obtain the short exact sequence
\begin{equation}\label{ses}
0 \rightarrow \Ic\rtimes  \RR  \rightarrow  C^*(H\rtimes_{D} \RR) \rightarrow C^*((H/Z)\rtimes_{\widetilde{D}} \RR )\rightarrow 0.  
\end{equation}
Here, we have that  $\sigma(D\vert_\zg) \cap \ie \RR =\emptyset$ 
by \eqref{roots_1},  hence,  by Lemma~\ref{free_L2}, the  continuous action 
$\alpha_{D}  \colon \RR \times \widehat{\Ic} \to \widehat{\Ic}$ is free and proper. 
Thus $\Ic \rtimes_{D}  \RR$  has continuous trace by \cite[Th. 1.1(3)]{RaRo88}, hence it is AF-embeddable by \cite[Lemma 2.6]{Sp88}. 
It then follows from condition \eqref{roots_2}  and  Example~\ref{ax+b}  that 
$C^*((H/Z)\rtimes_{\widetilde{D}} \RR)$ is AF-embeddable as well, since the group $H/Z$ is abelian. 

Since $\dim(\hg/\zg)\in2\ZZ+1$, then $K_1(C^*((H/Z)\rtimes_{\widetilde{D}} \RR) ) = K_0 (C^*(H/Z))=\{0\}$, hence the index 
map corresponding to the $C^*$-algebra extension~\eqref{ses} vanishes. 
Therefore, by Lemma~\ref{embed}, $C^*(H\rtimes_{D} \RR)$ is AF-embeddable. 
\end{proof}

\begin{remark}
\normalfont
In Lemma~\ref{special}, if the hypothesis $\dim(\hg/ \zg)\in2\ZZ+1$ is dropped, then the index map corresponding to \eqref{ses} may not vanish, 
and $C^*(H\rtimes_{D} \RR)$ may not be AF-embeddable, or even stably finite, as we will see in Theorem~\ref{N6N17} below.
\end{remark}

\begin{remark}\label{deriv-ext}
\normalfont
Let $\hg$ be a finite-dimensional real Lie algebra with a symplectic structure $\omega\colon\hg\times\hg\to\RR$, and define the corresponding central extension 
$\ng:=\hg\dotplus_\omega\RR$ with its Lie bracket $[(X_1,t_1),(X_2,t_2)]:=([X_1,X_2],\omega(X_1,X_2))$ for all $X_1,X_2\in\hg$ and $t_1,t_2\in\RR$. 
For any $D_0\in\Der(\hg)$ and $a_0\in\RR$ we define the linear map $D\colon\ng\to\ng$, $D(X,t):=(D_0X,a_0t)$. 
Then  $D\in\Der(\ng)$ if and only if 
$$\omega(D_0X_1,X_2)+\omega(X_1,D_0X_2)=a_0\omega(X_1,X_2) 
\text{ for all }X_1,X_2\in\hg.$$
\end{remark}

\begin{lemma}\label{N6N15-lemma}
	Let $\hg$ be the real Lie algebra with a basis $X_1,X_2,X_3,Y_1,Y_2,Y_3$ satisfying the commutation relations $[X_1,X_2]=Y_3$, $[X_2,X_3]=Y_1$,  $[X_3,X_1]=Y_2$. 
	\begin{enumerate}[{\rm(i)}]
		\item\label{N6N15_item0}  The centre of $\hg$ is $\zg:=\spa\{Y_1,Y_2,Y_3\}$ and  $\dim\Oc_\xi=2$ for all $\xi\in\hg^*\setminus\zg^\perp$. 
		\item\label{N6N15_item1} 
		For any  $a_1,a_2,a_3\in\RR$ there exists a unique skew-symmetric bilinear functional 
		$\omega\colon\hg\times\hg\to\RR$ 
		with $\omega(X_j,Y_k)=\delta_{jk}a_j$, $\omega(X_j,X_k)=\omega(Y_j,Y_k)=0$ for all $j,k\in\{1,2,3\}$. 
		Moreover, $\omega$ is a symplectic structure of the Lie algebra $\hg$ if and only if 
		\begin{equation}\label{N6N15_item1_eq1}
		a_1+a_2+a_3=0\text{ and }a_1a_2a_3\ne0.
		\end{equation}
		\item\label{N6N15_item2} 
		For any matrix $B=(b_{jk})_{1\le j,k\le3}\in M_3(\RR)$ there exists a unique derivation $D_B\in\Der(\hg)$ satisfying $D_BX_j=\sum\limits_{k=1}^3b_{jk}X_k$ for $j=1,2,3$. 
		If $a_1,a_2,a_3\in\RR$ satisfy~\eqref{N6N15_item1_eq1} and $\omega$ is their corresponding symplectic structure of~$\hg$ as in~\eqref{N6N15_item1} above, then there exists $D\in\Der(\hg\dotplus_\omega\RR)$ with $D\vert_\hg=D_B$ 
		if and only if 
		\begin{equation}\label{N6N15_item2_eq2}
		b_{ij}(a_i-a_j)=0\text{ for all }j,k\in\{1,2,3\}
		\end{equation}
		and if this is the case then $D(0,1)=(0,\Tr B)\in\hg\dotplus_\omega\RR$. 
		\end{enumerate}
\end{lemma}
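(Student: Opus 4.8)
The plan is to verify each of the three assertions by direct computation using the Jacobi identity and the definition of a derivation, exploiting the special structure of $\hg$ (the free 2-step nilpotent Lie algebra on three generators, also known as $\ng_{6,2}$ or $\hg\simeq\mathfrak{n}_{3,2}$).

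\textbf{Assertion \eqref{N6N15_item0}.} First I would observe that $[\hg,\hg]=\spa\{Y_1,Y_2,Y_3\}$ directly from the commutation relations, and that each $Y_j$ is central since $[Y_j,X_k]$ and $[Y_j,Y_k]$ never appear among the defining brackets (and $\hg$ is 2-step nilpotent, so $[[\hg,\hg],\hg]=\{0\}$). Conversely, if $X=\sum c_jX_j+\sum d_jY_j$ is central, then $[X,X_1]=c_2Y_3-c_3Y_2=0$ forces $c_2=c_3=0$, and similarly $[X,X_2]=0$ forces $c_1=c_3=0$; hence $\zg=\spa\{Y_1,Y_2,Y_3\}$. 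For the orbit dimension: for $\xi\in\hg^*$ write $\ell_j:=\langle\xi,Y_j\rangle$, so $\xi\notin\zg^\perp$ means $(\ell_1,\ell_2,\ell_3)\ne0$. The radical $\hg(\xi)$ is the kernel of the alternating form $B_\xi(X,Y)=\langle\xi,[X,Y]\rangle$; on $\spa\{X_1,X_2,X_3\}$ this form has matrix $\begin{pmatrix}0&\ell_3&-\ell_2\\-\ell_3&0&\ell_1\\\ell_2&-\ell_1&0\end{pmatrix}$, and $\zg\subseteq\hg(\xi)$ always. A $3\times3$ skew-symmetric matrix has rank $2$ exactly when it is nonzero, i.e. when $(\ell_1,\ell_2,\ell_3)\ne0$. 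Hence $\dim\hg(\xi)=6-2=4$ and $\dim\Oc_\xi=2$ for all $\xi\in\hg^*\setminus\zg^\perp$. (In fact the kernel vector on the $X$-part is $(\ell_1,\ell_2,\ell_3)$ itself.)

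\textbf{Assertion \eqref{N6N15_item1}.} Existence and uniqueness of $\omega$ from prescribing its values on a basis is immediate by bilinearity and skew-symmetry. I would then compute the condition for $\omega$ to be a 2-cocycle, i.e. $\omega([X,Y],Z)+\omega([Y,Z],X)+\omega([Z,X],Y)=0$ for all basis triples. Since $\omega$ pairs $X_j$ with $Y_j$ and kills all other basis pairs, the only triples giving a nonzero contribution are $(X_1,X_2,X_3)$: there $\omega([X_1,X_2],X_3)+\omega([X_2,X_3],X_1)+\omega([X_3,X_1],X_2)=\omega(Y_3,X_3)+\omega(Y_1,X_1)+\omega(Y_2,X_2)=-(a_3+a_1+a_2)$, so the cocycle condition is $a_1+a_2+a_3=0$. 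Nondegeneracy of $\omega$: in the basis $X_1,X_2,X_3,Y_1,Y_2,Y_3$ the matrix of $\omega$ is block anti-diagonal with blocks $\mathrm{diag}(a_1,a_2,a_3)$ and $-\mathrm{diag}(a_1,a_2,a_3)$, hence $\omega$ is nondegenerate iff $a_1a_2a_3\ne0$. Combining, $\omega$ is symplectic iff \eqref{N6N15_item1_eq1} holds. (I note that when $a_1+a_2+a_3=0$ the 2-cocycle $\omega$ is nontrivial on the $X$-part, but this is not needed.)

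\textbf{Assertion \eqref{N6N15_item2}.} For the first part: prescribing $D_BX_j=\sum_kb_{jk}X_k$ determines $D_B$ on $\spa\{X_1,X_2,X_3\}$, and then $D_B$ on $\zg$ is forced by the derivation property, $D_BY_3=D_B[X_1,X_2]=[D_BX_1,X_2]+[X_1,D_BX_2]$ and cyclically; one checks consistency (that these formulas are compatible with $Y_j$ being a bracket in possibly two ways — but here each $Y_j$ arises from a unique pair) and that the resulting linear map is a derivation by verifying the Leibniz rule on all basis brackets, which reduces again to the $(X_i,X_j)$ brackets. Explicitly $D_BY_3=b_{11}Y_3+b_{12}[X_2,X_2]+\dots$; I would carry out this short computation to get $D_BY_k$ in terms of $B$ (it comes out as the action of $\det$-cofactor type, in fact $D_B$ acts on $\zg\simeq\Lambda^2(\RR^3)^*$ by $(\Tr B)\mathrm{id}-B^{\mathsf T}$ or similar — the precise form is not needed for the statement). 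For the extension part: by Remark~\ref{deriv-ext}, $D(X,t):=(D_BX,a_0t)$ is a derivation of $\hg\dotplus_\omega\RR$ iff $\omega(D_BX_1',X_2')+\omega(X_1',D_BX_2')=a_0\,\omega(X_1',X_2')$ for all $X_1',X_2'\in\hg$. Testing on basis pairs: for $(X_i,X_j)$ and for $(Y_i,Y_j)$ both sides vanish; the only constraints come from pairs $(X_i,Y_j)$. Compute $\omega(D_BX_i,Y_j)+\omega(X_i,D_BY_j)$. Here $\omega(D_BX_i,Y_j)=\omega(\sum_kb_{ik}X_k,Y_j)=b_{ij}a_j$, while $\omega(X_i,D_BY_j)$ requires knowing $D_BY_j$; using the formula for $D_BY_j$ obtained above, I expect $\omega(X_i,D_BY_j)$ to produce a term $-b_{ij}a_i$ plus a term proportional to $\delta_{ij}$. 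The off-diagonal ($i\ne j$) equations then read $b_{ij}a_j-b_{ij}a_i=0$, i.e. $b_{ij}(a_j-a_i)=0$, which is \eqref{N6N15_item2_eq2}; the diagonal equations ($i=j$) should collapse to determining $a_0$, giving $a_0=\Tr B$, whence $D(0,1)=(0,a_0)=(0,\Tr B)$.

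\textbf{Main obstacle.} The only genuinely computational point is getting the formula for $D_B$ on the centre $\zg$ right and then tracking signs through $\omega(X_i,D_BY_j)$ in the last paragraph; a sign error there would change \eqref{N6N15_item2_eq2} or the value $\Tr B$. I would double-check by evaluating the derivation condition on the specific pair $(X_1,Y_1)$: there $\omega(D_BX_1,Y_1)=b_{11}a_1$ and $D_BY_1=D_B[X_2,X_3]=[D_BX_2,X_3]+[X_2,D_BX_3]=(b_{22}+b_{33})Y_1-b_{21}Y_2\cdot(\pm)\ldots$, so $\omega(X_1,D_BY_1)=0$ for the $Y_1$ part unless the coefficient of $Y_1$ in... — actually $\omega(X_1,Y_1)=a_1$, so $\omega(X_1,D_BY_1)=(\text{coeff of }Y_1\text{ in }D_BY_1)\cdot a_1=(b_{22}+b_{33})a_1$, and the $i=j=1$ equation becomes $b_{11}a_1+(b_{22}+b_{33})a_1=a_0a_1$, i.e. $a_0=b_{11}+b_{22}+b_{33}=\Tr B$ (using $a_1\ne0$), consistent across all three diagonal indices precisely because... this is automatic. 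This confirms the claimed value and the method; the full proof just records these basis computations.
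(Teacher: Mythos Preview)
Your proposal is correct and follows essentially the same route as the paper: both verify the cocycle condition on the triple $(X_1,X_2,X_3)$ to get $a_1+a_2+a_3=0$, read off nondegeneracy from the block form of $\omega$, force $D_BY_j$ from the Leibniz rule on $[X_r,X_s]$, and then apply Remark~\ref{deriv-ext} by testing the $\omega$-compatibility on the pairs $(X_i,Y_j)$. Your explicit verification of~\eqref{N6N15_item0} is more detailed than the paper's (which simply declares it well known), and your sign worry is harmless since the off-diagonal condition is $b_{ij}(a_i-a_j)=0$ either way; the paper records the intermediate formula $D_BY_j=-\sum_{k\ne j}b_{kj}Y_k+\bigl(\sum_{k\ne j}b_{kk}\bigr)Y_j$, which you should simply write down rather than allude to, but your diagonal check $(X_1,Y_1)$ already recovers it where needed.
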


\begin{proof}
\eqref{N6N15_item0} This is well known.  

\eqref{N6N15_item1} 
It is straightforward to check that $\omega$ is a 2-cocycle if and only if $a_1+a_2+a_3=0$, and on the other hand $\omega$ is non-degenerate if and only if $a_1a_2a_3\ne0$. 
Therefore, $\omega$ is a symplectic structure of the Lie algebra $\hg$ if and only if \eqref{N6N15_item1_eq1} is satisfied. 

\eqref{N6N15_item2} 
In order to obtain a derivation $D_B\colon\hg\to\hg$ we define $D_BY_j:=[D_B X_r,X_s]+[X_r,D_BX_s]$ if $Y_j=[X_r,X_s]$. 
A straightforward computation then leads to the formula 
\begin{equation}\label{N6N15_proof_eq1}
D_BY_j=-\sum_{k\ne j}b_{kj}Y_k+\Bigl(\sum_{k\ne j}b_{kk})Y_j
\text{ for }j=1,2,3,
\end{equation}
which further implies 
$$\omega(D_B X_i,Y_j)+\omega(X_i,D_B Y_j)=
\begin{cases}
(\Tr B)a_j=(\Tr B)\omega(X_j,Y_j)&\text{ if }i=j,\\
b_{ij}(a_i-a_j)&\text{ if }i\ne j.
\end{cases}
$$
Since $\omega(X_i,Y_j)=0$ if $i\ne j$, the assertion then follows by Remark~\ref{deriv-ext}. 
\end{proof}

\begin{theorem}\label{N6N15}
Let $\hg$ be the real Lie algebra with a basis $X_1,X_2,X_3,Y_1,Y_2,Y_3$ satisfying the commutation relations $$[X_1,X_2]=Y_3,\ [X_2,X_3]=Y_1,\ [X_3,X_1]=Y_2. $$
For any  $a_1,a_2,a_3\in\RR$ with $a_1+a_2+a_3=0\ne a_1a_2a_3$, 
let $\omega\colon\hg\times\hg\to\RR$ be the symplectic form 
satisfying $\omega(X_j,Y_k)=\delta_{jk}a_j$, $\omega(X_j,X_k)=\omega(Y_j,Y_k)=0$ for all $j,k\in\{1,2,3\}$, 
and define the corresponding central extension 
$$\ng:=\hg\dotplus_\omega\RR.$$ 
For any $b_1, b_2, b_3 \in \RR$ with $b_1+b_2+b_3\ne 0$, let $D\in \Der(\ng)$ be the unique derivation with 
		$D X_j= b_{j}X_j$ for $j=1,2,3$.
		If we denote by $N\rtimes_D\RR$ the 8-dimensional exponential solvable Lie group whose Lie algebra is $\ng\rtimes\RR D$, 
		then the following assertions are equivalent: 
		\begin{itemize}
			\item $C^*(N\rtimes_D\RR)$ is not AF-embeddable. 
			\item $C^*(N\rtimes_D\RR)$ is not stably finite. 
			\item There exists $\epsilon\in\{\pm1\}$ with 
			$\epsilon z> 0$ for every $z\in \sigma(D)$. 
			\item There exists $\epsilon\in\{\pm1\}$ with 
			$\epsilon b_j > 0$ for $j=1,2,3$. 
		\end{itemize}
\end{theorem}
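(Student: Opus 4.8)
The plan is to verify the elementary equivalence of the last two bullets and then to close the cycle
$$
\text{(4th bullet)}\ \Rightarrow\ \text{(2nd bullet)}\ \Rightarrow\ \text{(1st bullet)}\ \Rightarrow\ \text{(4th bullet)};
$$
together these give the four-fold equivalence.

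\emph{Structural data.} First I would assemble the structure of the group. By Lemma~\ref{N6N15-lemma}\eqref{N6N15_item2}, the conditions $a_1+a_2+a_3=0\ne a_1a_2a_3$ and the diagonality of $B:=\mathrm{diag}(b_1,b_2,b_3)$ make $D$ well defined, with $D(0,1)=(0,b_1+b_2+b_3)$ and, by~\eqref{N6N15_proof_eq1}, $DY_j=(b_1+b_2+b_3-b_j)Y_j$. Writing $s:=b_1+b_2+b_3\ne 0$, we obtain
$$
\sigma(D)=\{b_1,b_2,b_3\}\cup\{s-b_1,s-b_2,s-b_3\}\cup\{s\}\subseteq\RR,
$$
so $\sigma(D)\cap\ie\RR\subseteq\{0\}$, confirming that $G:=N\rtimes_D\RR$ is an $8$-dimensional exponential solvable Lie group (with $\dim G\in 4\ZZ$, the range not covered by Theorem~\ref{4n+2}). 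A short computation shows that the centre of $\ng$ is $\zg:=\RR(0,1)$, one-dimensional because $a_1a_2a_3\ne 0$ prevents any $X_j$ or $Y_j$ from being central; that $\ng$ has generic flat coadjoint orbits, since for $\xi\in\ng^*$ with $\xi(0,1)\ne 0$ the skew form $\xi([\cdot,\cdot])$ on $\ng/\zg$ has, in the basis $X_1,X_2,X_3,Y_1,Y_2,Y_3$, the invertible matrix $\bigl(\begin{smallmatrix}0&A\\-A&0\end{smallmatrix}\bigr)$ with $A=\mathrm{diag}(a_1,a_2,a_3)$; and that the projection $(V,t)\mapsto V$ induces a Lie algebra isomorphism $\ng/\zg\simeq\hg$ carrying the derivation induced by $D$ onto $\widetilde D\in\Der(\hg)$ with $\widetilde DX_j=b_jX_j$ and $\widetilde DY_j=(s-b_j)Y_j$. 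Finally, $\zg$ is an ideal of $\gg:=\ng\rtimes\RR D$ and $[\gg,\zg]=\RR\cdot D(0,1)=\RR\cdot s(0,1)\ne\{0\}$, so Lemma~\ref{extra} shows $\gg$ is exact symplectic, with nilradical $\ng$ of codimension $1$ and $Z:=\exp\zg$ the one-dimensional centre of $N$; moreover $G/Z\simeq H\rtimes_{\widetilde D}\RR$, where $H$ is the connected simply connected group with Lie algebra $\hg$.

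\emph{The third and fourth bullets.} If $\epsilon b_j>0$ for $j=1,2,3$ with $\epsilon\in\{\pm 1\}$, then $\epsilon s>0$ and $\epsilon(s-b_j)=\epsilon\sum_{k\ne j}b_k>0$, so $\epsilon z>0$ for every $z\in\sigma(D)$; the converse is trivial since $b_1,b_2,b_3\in\sigma(D)$. Also, (2nd bullet)$\Rightarrow$(1st bullet) because an AF-embeddable $C^*$-algebra is stably finite. For (4th bullet)$\Rightarrow$(2nd bullet), the spectral condition $\epsilon z>0$ ($z\in\sigma(D)$) just obtained lets one apply Theorem~\ref{prop-cf4} directly to $G=N\rtimes_D\RR$, giving that $C^*(G)$ is not stably finite.

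\emph{The first bullet implies the fourth.} I would argue the contrapositive. If no $\epsilon\in\{\pm1\}$ satisfies $\epsilon b_j>0$ for all $j$, then among $b_1,b_2,b_3$ there is one that is $\le 0$ and one that is $\ge 0$, so there are $z_1,z_2\in\{b_1,b_2,b_3\}\subseteq\sigma(\widetilde D)$ with $(\Re z_1)(\Re z_2)\le 0$. By Corollary~\ref{cf-cor8}\eqref{cf-cor7-ii}, applied to the exact symplectic group $G$ (whose nilradical $N$ has codimension $1$ and centre $Z$), it suffices to show that $C^*(G/Z)=C^*(H\rtimes_{\widetilde D}\RR)$ is AF-embeddable, and this I would deduce from Lemma~\ref{special} applied to $H$ and $\widetilde D$: condition~\eqref{cond_i} holds because $[\hg,\hg]$ equals the centre $\spa\{Y_1,Y_2,Y_3\}$ of $\hg$ and $\dim(\hg/\spa\{Y_1,Y_2,Y_3\})=3\in 2\ZZ+1$; condition~\eqref{cond_ii} holds with $d=2$ by Lemma~\ref{N6N15-lemma}\eqref{N6N15_item0}; $\widetilde D$ is diagonalizable with real spectrum, so $\sigma(\widetilde D)\cap\ie\RR\subseteq\{0\}$; and $z_1,z_2$ supply the required sign condition. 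Hence $C^*(G/Z)$ is AF-embeddable, and therefore so is $C^*(G)$, which is the negation of the first bullet. The main obstacle is precisely the bookkeeping in this last step: recognizing $\ng/\zg$ as the free $2$-step nilpotent Lie algebra on three generators, keeping track of the eigenvalues of $\widetilde D$, and checking that both Corollary~\ref{cf-cor8} and Lemma~\ref{special} genuinely apply to the quotient group $G/Z$; once that is done, the remaining implications are purely formal.
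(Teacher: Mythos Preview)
Your proof is correct and follows essentially the same route as the paper: the equivalence of the last two bullets via the explicit spectrum of $D$, the implication (4th)$\Rightarrow$(2nd) via Theorem~\ref{prop-cf4}, and the contrapositive of (1st)$\Rightarrow$(4th) via Lemma~\ref{special} applied to $H\rtimes_{\widetilde D}\RR$ followed by Corollary~\ref{cf-cor8}\eqref{cf-cor7-ii}. One small inaccuracy: for general $\xi$ with $\xi(0,1)=c\ne 0$ the matrix of $\xi([\cdot,\cdot])$ on $\ng/\zg$ is $\bigl(\begin{smallmatrix}B & cA\\ -cA & 0\end{smallmatrix}\bigr)$ with $B$ skew (entries $\xi(Y_i)$) rather than $\bigl(\begin{smallmatrix}0 & A\\ -A & 0\end{smallmatrix}\bigr)$, but its determinant is still $(\det cA)^2\ne 0$, so your conclusion $\ng(\xi)=\zg$ stands.
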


\begin{proof} 
The existence and uniqueness of the symplectic form $\omega$ and the derivation~$D$ as in the statement follow by Lemma~\ref{N6N15-lemma}. 

The last two assertions in the statement are clearly equivalent since 
$$\sigma(D)=\{b_j\mid j=1,2,3\}\cup\Bigl\{\sum_{k\ne j}b_k\mid j=1,2,3\Bigr\}
\cup\{b_1+b_2+b_3\}$$
by Lemma~\ref{N6N15-lemma}\eqref{N6N15_item2}  and \eqref{N6N15_proof_eq1}. 
This also shows that $\sigma(D)\subseteq\RR$, hence $\ng\rtimes \RR D$ is 
an exponential (actually, a completely solvable) Lie algebra. 

If there exists $\epsilon\in\{\pm1\}$ with 
$\epsilon z> 0$ for every $z\in \sigma(D)$, then $C^*(N\rtimes_D\RR)$ is not stably finite by 
Theorem~\ref{prop-cf4}. 

Conversely, assume that there exists no $\epsilon\in\{\pm1\}$ with 
$\epsilon b_j > 0$ for $j=1,2,3$.  
Then, denoting $D_B:=D\vert_\hg\in\Der(\hg)$ as in Lemma~\ref{N6N15-lemma}, 
we have 
$$\sigma(D_B)=\{b_j\mid j=1,2,3\}\cup\Bigl\{\sum_{k\ne j}b_k\mid j=1,2,3\Bigr\}$$
hence $\sigma(D_B)\cap\ie\RR\subseteq\{0\}$ and  
there exist $z_1, z_2 \in \sigma(D_B)$  with $(\Re\, z_1) (\Re\, z_2) \le 0$. 
Therefore  $C^*(H\rtimes_D\RR)$ is AF-embeddable by 
Lemma~\ref{special} along with Lemma~\ref{N6N15-lemma}\eqref{N6N15_item0}. 
Now, since $\dim(N\rtimes_D\RR)=8\in 4\ZZ$, 
Corollary~\ref{cf-cor8}\eqref{cf-cor7-ii} implies that 
$C^*(N\rtimes_D\RR)$ is AF-embeddable, 
and this finishes the proof. 
\end{proof}

For Lemma~\ref{N6N17-lemma} and Theorem~\ref{N6N17} below, we recall that a \emph{finite-dimensional real division algebra} is a finite-dimensional real vector space $\KK$ endowed with a bilinear map $\KK\times\KK\to\KK$, $(v,w)\mapsto v w$, 
whose corresponding linear mappings $v\mapsto v w_0$ and $w\mapsto v_0 w$ are injective (hence bijective) for all $v_0,w_0\in\KK\setminus\{0\}$. 
If this is the case, then $\dim_\RR\KK\in\{1,2,4,8\}$ by \cite[Cor. 1]{BoMi58}, 
and these values of $\dim_\RR\KK$ are realized for instance if $\KK$ is the real field~$\RR$, the complex field~$\CC$, the quaternion field~$\mathbb{H}$, and the octonion (non-associative) algebra~$\mathbb{O}$, respectively. 

Let $\KK$ be a finite-dimensional real division algebra and define 
$$
\begin{gathered}
\omega\colon(\KK^n\times\KK^n)\times(\KK^n\times\KK^n)\to\KK,\\
\omega((v_1,w_1),(v_2,w_2)):=\sum_{k=1}^n(v_{1k}w_{2k}-v_{2k}w_{1k})
\end{gathered}
$$
for $v_j=(v_{j1},\dots,v_{jn}), w_j=(w_{j1},\dots,w_{jn})\in\KK^n$, $j=1,2$. 
It is clear that $\omega((v_1,w_1),(v_2,w_2))=-\omega((v_2,w_2),(v_1,w_1))$, 
hence we may define the \emph{real} 2-step nilpotent Lie algebra 
\begin{equation}\label{hgK}
\hg_\KK:=\KK^n\times\KK^n\times\KK
\end{equation}
with its Lie bracket 
$[(v_1,w_1,z_1),(v_2,w_2,z_2)]:=[(0,0,\omega((v_1,w_1),(v_2,w_2)))]$. 
Let $H_\KK=(\hg_\KK,\cdot)$ be the 
nilpotent Lie group whose Lie algebra is~$\hg_\KK$.

\begin{lemma}\label{N6N17-lemma}
Let $\KK$ be a finite-dimensional real division algebra, and define 
  $\zg:=\{0\}\times\{0\}\times\KK\subseteq\hg_\KK$. Then the following assertions hold: 
\begin{enumerate}[{\rm(i)}]
	\item\label{N6N17_item1} We have $[\hg_\KK,\hg_\KK]=\zg$ and $\zg$ is the centre of $\hg$. 
	\item\label{N6N17_item2} For every $\xi\in\hg_\KK^*\setminus[\hg_\KK,\hg_\KK]^\perp$ we have $\hg_\KK(\xi)=\zg$ and $\dim\Oc_\xi=2n\dim_\RR\KK$. 
	\item\label{N6N17_item3} 
	The mapping
	$r_\zg\colon (\hg_\KK^*\setminus\zg^\perp)/H_\KK\to \zg^*\setminus\{0\}
	,\quad \Oc_\xi\mapsto\xi\vert_{\zg}$
	is a well-defined homeomorphism. 
	\item\label{N6N17_item4} If $a=(a_1,\dots,a_n),b=(b_1,\dots,b_n)\in\RR^n$, $c\in\RR$, and $D\colon\hg_\KK\to\hg_\KK$ is the $\RR$-linear mapping defined by 
	$$D(v,w,z):=((a_1v_1,\dots,a_nv_n),(b_1w_1,\dots,b_nw_n),cz), $$ 
	then $D\in\Der(\hg_\KK)$ if and only if $a_k+b_k=c$ for $k=1,\dots,n$. 
	\end{enumerate}
 \end{lemma}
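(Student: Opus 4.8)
The plan is to prove Lemma~\ref{N6N17-lemma} by treating its four assertions in order, since \eqref{N6N17_item1} and \eqref{N6N17_item4} are essentially structural computations with the bracket, while \eqref{N6N17_item2} and \eqref{N6N17_item3} reduce to linear-algebra facts about the nondegeneracy of the form $\omega$ combined with the general theory of flat coadjoint orbits of $2$-step nilpotent Lie groups.

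\emph{Assertion \eqref{N6N17_item1}.} Since the bracket of $\hg_\KK$ lands in $\zg=\{0\}\times\{0\}\times\KK$ by definition, we have $[\hg_\KK,\hg_\KK]\subseteq\zg$ and $\zg$ is central. For the reverse inclusion $\zg\subseteq[\hg_\KK,\hg_\KK]$ one only needs, for each $z\in\KK$, a pair whose $\omega$-value is $z$: take $v=(v_1,0,\dots,0)$, $w=(0,\dots,0)$, and $v'=(0,\dots,0)$, $w'=(w_1,0,\dots,0)$; then $\omega((v,w),(v',w'))=v_1w_1$, and since $\KK$ is a division algebra the map $w_1\mapsto v_1w_1$ is surjective for fixed $v_1\ne0$. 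To see that $\zg$ is exactly the centre, note that if $(v,w,z)$ is central then $\omega((v,w),(v',w'))=0$ for all $(v',w')$; taking $(v',w')=(0,e_k)$ and $(e_k,0)$ respectively forces $v_k=0$ and $w_k=0$ for all $k$ (again using that the multiplication maps of $\KK$ are injective), so $(v,w,z)\in\zg$.

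\emph{Assertions \eqref{N6N17_item2} and \eqref{N6N17_item3}.} Fix $\xi\in\hg_\KK^*$ with $\xi\vert_\zg\ne0$, i.e.\ $\xi\notin[\hg_\KK,\hg_\KK]^\perp$. The coadjoint isotropy algebra is $\hg_\KK(\xi)=\{X\in\hg_\KK\mid\langle\xi,[X,Y]\rangle=0\ \forall Y\}$; since $[X,Y]\in\zg$ and $\xi\vert_\zg\ne0$, writing $\xi\vert_\zg=\langle\xi,(0,0,\cdot)\rangle=\lambda$ for a nonzero $\lambda\in\KK^*$, the condition becomes $\lambda(\omega((v,w),(v',w')))=0$ for all $(v',w')$. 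Thus $\hg_\KK(\xi)$ modulo $\zg$ is the radical of the $\RR$-bilinear form $(v,w),(v',w')\mapsto\lambda(\omega((v,w),(v',w')))$ on $\KK^n\times\KK^n$. I will show this form is nondegenerate: if $\lambda(\sum_k(v_kw'_k-v'_kw_k))=0$ for all $(v',w')$, then choosing $(v',w')=(0,e_k)$ gives $\lambda(v_ke_k\text{-component contributions})$; more precisely the pairing $v_k\mapsto\lambda(v_k w'_k)$ being identically zero for all $w'_k\in\KK$ forces $v_k=0$ because left multiplication by $v_k$ is either zero or surjective, and $\lambda\ne0$. Hence the radical is $\{0\}$, so $\hg_\KK(\xi)=\zg$ and $\dim\Oc_\xi=\dim\hg_\KK-\dim\zg=2n\dim_\RR\KK$. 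This makes $H_\KK$ a group with flat (generic) coadjoint orbits; assertion \eqref{N6N17_item3} is then the standard parametrization of such orbits by their central character: the map $r_\zg$ is well defined because the central component of $\xi$ is coadjoint-invariant, it is surjective onto $\zg^*\setminus\{0\}$ by the above, it is injective because two functionals with the same nonzero restriction to $\zg$ and hence the same nondegenerate form lie on the same orbit (a symplectic-translation argument, exactly as in \cite[\S 6]{BBL17} or the flat-orbit discussion used in Lemma~\ref{special}), and it is a homeomorphism by the open-map property of the quotient $\hg_\KK^*\setminus\zg^\perp\to(\hg_\KK^*\setminus\zg^\perp)/H_\KK$ together with continuity of restriction.

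\emph{Assertion \eqref{N6N17_item4}.} This is a direct computation: $D$ is a derivation iff $D[(v,w,z),(v',w',z')]=[D(v,w,z),(v',w',z')]+[(v,w,z),D(v',w',z')]$, and since both sides live in $\zg$ this reduces to $c\,\omega((v,w),(v',w'))=\omega((Av,Bw),(v',w'))+\omega((v,w),(Av',Bw'))$ where $A=\mathrm{diag}(a_k)$, $B=\mathrm{diag}(b_k)$ act componentwise on $\KK^n$. Expanding, $\omega((Av,Bw),(v',w'))+\omega((v,w),(Av',Bw'))=\sum_k\big((a_kv_k)w'_k-v'_k(b_kw_k)+v_k(b_kw'_k)-(a_kv'_k)w_k\big)=\sum_k(a_k+b_k)(v_kw'_k-v'_kw_k)$, using that the scalars $a_k,b_k$ commute with the $\KK$-multiplication. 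Comparing with $c\sum_k(v_kw'_k-v'_kw_k)$ and invoking nondegeneracy of the component pairings, the derivation condition holds iff $a_k+b_k=c$ for every $k$.

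The main obstacle is assertion \eqref{N6N17_item3}: everything else is a careful but routine bracket computation, whereas the homeomorphism statement requires combining the nondegeneracy of $\omega$ (linear algebra over the division algebra $\KK$) with the Kirillov picture of flat coadjoint orbits and the topology of the orbit space. I would lean on the existing machinery in the paper --- the flat-orbit results cited in Lemma~\ref{special} and the quotient-topology facts from \cite{BB21} --- rather than reprove them, so the real work is checking that the hypotheses of those results are met, namely that $[\hg_\KK,\hg_\KK]=\zg$ is the centre and that the generic orbits all have the constant dimension $2n\dim_\RR\KK$, which is exactly what \eqref{N6N17_item1} and \eqref{N6N17_item2} supply.
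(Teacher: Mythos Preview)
Your proof is correct and, for parts \eqref{N6N17_item1}, \eqref{N6N17_item2}, and \eqref{N6N17_item4}, proceeds essentially as the paper does (in fact you give more detail in \eqref{N6N17_item4}, which the paper dismisses as ``straightforward'').

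The one genuine difference is in \eqref{N6N17_item3}. You argue injectivity of $r_\zg$ by invoking general flat-orbit machinery and citing \cite{BBL17} and the discussion around Lemma~\ref{special}, describing it as ``a symplectic-translation argument''. The paper instead gives a one-line direct computation: since $\hg_\KK$ is $2$-step nilpotent and $\hg_\KK(\xi)=\zg$ by \eqref{N6N17_item2}, one has $\Oc_\xi=\xi+\zg^\perp$ for every $\xi\in\hg_\KK^*\setminus\zg^\perp$. This single formula simultaneously shows that $r_\zg$ is well defined (orbits are cosets of $\zg^\perp$, hence have a unique restriction to $\zg$) and bijective (two functionals lie on the same orbit iff they differ by an element of $\zg^\perp$, i.e.\ iff their restrictions to $\zg$ agree). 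The homeomorphism part is then handled, as you also do, via the factorization $r_\zg\circ q=r$ with $q$ the quotient map and $r$ the restriction, both continuous and open. Your route is valid but less self-contained; the paper's explicit identification $\Oc_\xi=\xi+\zg^\perp$ is the cleaner way to package the same content and avoids any appeal to outside references.
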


\begin{proof}
 \eqref{N6N17_item1} 
It is clear that $[\hg_\KK,\hg_\KK]=\zg$ and $[\hg_\KK,\zg]=\{0\}$.
In order to prove that $\zg$ is actually equal to the centre of $\hg_\KK$, 
let us assume that there exists $x_0:=(v_0,w_0,z)\in\hg_\KK\setminus\zg$ with $[x_0,\hg_\KK]=\{0\}$ and $v_0=(v_{01},\dots,v_{0n}), w_0=(w_{01},\dots,w_{0n})\in\KK^n$. 
Since $x_0\not\in\zg$, there exists $j\in\{1,\dots,n\}$ with $v_{0j}\in\KK\setminus\{0\}$ or $w_{0j}\in\KK\setminus\{0\}$. 
If for instance $v_{0j}\ne0$, then we define $x:=(v,w,0)\in\hg$ 
where $v=0\in\KK^n$ and $w=(w_1,\dots,w_n)\in\KK^n$ 
is given by $w_k=0$ if $k\in\{1,\dots,n\}\setminus\{j\}$ and $w_j=v_{0j}\in\KK$, 
and we obtain $[x_0,x]=(0,0,v_{0j}v_{0j})\in\hg\setminus\{0\}$, which is a contradiction with the assumption $[x_0,\hg]=\{0\}$. 
The case $w_j\in\KK\setminus\{0\}$ can be discussed similarly. 

\eqref{N6N17_item2} 
We have $\hg_\KK(\xi)=\{x\in\hg\mid[x,\hg_\KK]\subseteq\Ker\xi\}$, so the inclusion $\zg\subseteq\hg_\KK(\xi)$ follows by~\eqref{N6N17_item1}. 
For the converse inclusion, assume there exists $x_0:=(v_0,w_0,z)\in\hg_\KK(\xi)\setminus\zg$. 
Since $x_0\not\in\zg$, there exists $j\in\{1,\dots,n\}$ with $v_{0j}\in\KK\setminus\{0\}$ or $w_{0j}\in\KK\setminus\{0\}$. 
If for instance $v_{0j}\ne0$, then for any $y=(v,w,0)\in\hg$ with $v=0\in\KK^n$ and $w=(w_1,\dots,w_n)\in\KK^n$ 
with $w_k=0$ if $k\in\{1,\dots,n\}\setminus\{j\}$ we have 
$[x_0,x]=(0,0,v_{0j}w_j)$. 
Here $w_j\in\KK$ is arbitrary and $v_{0j}\in\KK\setminus\{0\}$ hence, 
since $\KK$ is a division algebra, it follows that $\zg\subseteq[x_0,\hg_\KK]$. 
On the other hand, we have by assumption $x_0\in\hg_\KK(\xi)$, hence 
$[\hg_\KK,\hg_\KK]=\zg\subseteq[x,\hg_\KK]\subseteq\Ker\xi$, 
a contradiction with the hypothesis $\xi\in\hg_\KK^*\setminus[\hg_\KK,\hg_\KK]^\perp$. 

The second assertion follows by the general equality $\dim\Oc_\xi=\dim(\hg_\KK/\hg_\KK(\xi))$. 

\eqref{N6N17_item3} 
For every  $\xi\in\hg_\KK^*\setminus[\hg_\KK,\hg_\KK]^\perp$ we have $\Oc_\xi=\xi+\zg^\perp$ by~\eqref{N6N17_item3} hence the mapping $r_\zg$ is well-defined and bijective. 
Moreover, if we define $r\colon \hg_\KK^*\setminus\zg^\perp\to \zg^*\setminus\{0\}$, $\xi\mapsto\xi\vert_\zg$, and $q\colon \hg_\KK^*\setminus\zg^\perp\to(\hg_\KK^*\setminus\zg^\perp)/H_\KK$, $\xi\mapsto \Oc_\xi$, 
then $r_\zg\circ q=r$ and, since $r$ is a continuous open mapping and $q$ is a quotient mapping, it follows that $r_\zg$ is continuous and open. 

\eqref{N6N17_item4} 
This assertion is straightforward. 
\end{proof}

\begin{theorem}\label{N6N17}
Let $\KK$ be a finite-dimensional real division algebra.
Assume $a=(a_1,\dots,a_n),b=(b_1,\dots,b_n)\in\RR^n$, $c\in\RR$,
are such that  $a_k+b_k=c \ne 0$ for $k=1,\dots,n$.
and let  $D\in \Der(\hg_\KK)$ be the $\RR$-linear mapping defined by 
	$D(v,w,z):=((a_1v_1,\dots,a_nv_n),(b_1w_1,\dots,b_nw_n),cz)$.
If  $H_\KK\rtimes_D\RR$ is the simply connected Lie group whose Lie algebra is $\hg_\KK\rtimes\RR D$, then 
	$\Pg(C^*(H_\KK\rtimes_D \RR))\ne\{0\}$. 
	If moreover $\KK\ne\RR$, then $C^*(H_\KK\rtimes_D \RR)$ is not stably finite. 
 \end{theorem}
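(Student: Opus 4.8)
The plan is to establish the two assertions separately, using the structural machinery of Section~\ref{section4k} together with the explicit spectral and orbit data from Lemma~\ref{N6N17-lemma}. For the first assertion, I would observe that since $a_k+b_k=c\ne0$ for every $k$, the derivation $D\in\Der(\hg_\KK)$ restricts on the centre $\zg$ to the scalar operator $c\,\id_\zg$; in particular $d_\zg=c\ne0$, so $\hg_\KK\rtimes\RR D$ is exact symplectic by the nilradical-of-codimension-1 criterion (the group $H_\KK$ has generic flat coadjoint orbits by Lemma~\ref{N6N17-lemma}\eqref{N6N17_item2}, and its centre is $\zg$ with $\dim\zg=\dim_\RR\KK$). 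The subtlety is that $\dim\zg$ need not equal $1$ when $\KK\ne\RR$, so Lemma~\ref{extra} and Corollary~\ref{cf-cor8} do not apply verbatim; however, for the existence of a nonzero projection we only need an open point in $\Prim(G)$, and by Remark~\ref{P3_group} combined with Lemma~\ref{N6N17-lemma}\eqref{N6N17_item3} the coadjoint orbits meeting $\hg_\KK^*\setminus\zg^\perp$ have open closure only after we pass to the one-dimensional central quotients. Concretely, fix any $0\ne\xi_0\in\zg^*$ extended to $\gg^*$; since $\xi_0\circ D\vert_\zg=c\xi_0$, I would choose a rank-one direction in $\zg^*$ to cut down to a $1$-dimensional central piece and invoke Proposition~\ref{P3} / Corollary~\ref{solv-4n+2}-type reasoning, or more directly: the $C^*$-algebra contains, as in the proof of Theorem~\ref{cf-prop7}, an elementary ideal $\Jc\simeq\Kc\otimes\Kc$ arising from $\Ic_+\rtimes_\alpha\RR$; any such ideal contains a nonzero projection, giving $\Pg(C^*(H_\KK\rtimes_D\RR))\ne\{0\}$.

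For the second assertion, assume $\KK\ne\RR$, so $m:=\dim_\RR\KK\in\{2,4,8\}$ and $\dim\hg_\KK=2nm+m=(2n+1)m$. Then $\dim(H_\KK\rtimes_D\RR)=(2n+1)m+1$. The idea is to apply Proposition~\ref{proj} to $\Ac:=C^*(H_\KK\rtimes_D\RR)$ with its canonical real structure. First I would identify $\pi_0$: take the irreducible representation $\pi_{t}$ (in the notation of~\eqref{cf-4.5}) attached to a generic central character, whose kernel $\Pc_0$ is an open point of $\Prim(\Ac)$, and whose conjugate $\overline{\pi_0}$ corresponds to the opposite central character — because $d_\zg=c\ne0$ the two central characters $\pm\xi_0\vert_\zg$ are genuinely different, so $\Pc_0\ne\overline{\Pc_0}$ and $\Pc_0\cap\overline{\Pc_0}=\{0\}$, verifying hypotheses~\eqref{proj_item1}–\eqref{proj_item2}. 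Hypothesis~\eqref{proj_item3}, that $K_0(\Ac)=\ZZ[p]_0$ for a minimal projection $p$ in the associated elementary ideal, would follow from Corollary~\ref{signs_solvable}: here $\dim\Ac$ is even (since $m$ is even), so $K_0(C^*(G))\simeq\ZZ$, and the computation of $\delta$ in the Wiener–Hopf / six-term sequence identifies the generator with $[p]_0$ exactly as in the proof of Theorem~\ref{cf-prop7}\eqref{cf-prop7_ii}.

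The crux is hypothesis~\eqref{proj_item4} of Proposition~\ref{proj}: one must produce a projection $q\in\Pg(\Ac)$ lying outside $\Jc_0+\overline{\Jc_0}$, equivalently a nonzero projection in the quotient $C^*((H_\KK/Z)\rtimes_{\widetilde D}\RR)$ that lifts. Here I would argue that $H_\KK/Z\cong\RR^{2nm}$ is abelian, so the quotient is a generalized $ax+b$-group $\RR^{2nm}\rtimes_{\widetilde D}\RR$, and the hypothesis $a_k+b_k=c\ne0$ forces every eigenvalue of $\widetilde D$ to satisfy $\Re z\ne0$ in a one-sided manner on each coordinate block — more precisely, examining $\sigma(\widetilde D)=\{a_1,\dots,a_n,b_1,\dots,b_n\}$, since $a_k+b_k=c\ne0$ we cannot have both $a_k=0$ and $b_k=0$; after possibly restricting to the subalgebra where $D$ has all eigenvalues of one sign (which is nonempty and gives a quotient that is again a generalized $ax+b$-group satisfying condition~\eqref{ax+b_item1} of Example~\ref{ax+b}), that quotient $C^*$-algebra is not stably finite, hence has a nonzero projection, which we lift through the split-type surjection using \cite[Thm.~1]{Ch83} exactly as in the proof of Corollary~\ref{cf-cor8}\eqref{cf-cor7-i}. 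I expect the main obstacle to be bookkeeping: arranging the central extension $\hg_\KK=\hg\dotplus_\omega\RR$ so that the hypotheses of Proposition~\ref{proj} (especially the $K_0$-normalization~\eqref{proj_item3} and the construction of the lift $q$) are met when $\dim\zg>1$, since the clean codimension-1-nilradical picture of Lemma~\ref{extra} is not literally available; the resolution is that $\hg_\KK$ still has $1$-dimensional generic stabilizer modulo the centre after fixing a central direction, so the relevant ideal $\Jc$ is still elementary and the argument of Theorem~\ref{cf-prop7} goes through with $\Kc(L^2(\RR^{nm}))$ in place of $\Kc(L^2(\RR^d))$.
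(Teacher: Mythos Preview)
Your proposal contains a parity error that unravels both halves of the argument. You compute $\dim(H_\KK\rtimes_D\RR)=(2n+1)m+1$ with $m=\dim_\RR\KK$, and then assert this is even because $m$ is even. But $(2n+1)m$ is even when $m$ is even, so $(2n+1)m+1$ is \emph{odd}. Consequently, by Corollary~\ref{signs_solvable}\eqref{signs_solvable_item2} one has $K_0(C^*(G))=\{0\}$, not $\ZZ$, so hypothesis~\eqref{proj_item3} of Proposition~\ref{proj} cannot be satisfied in the form you want, and the whole $[p]_0$-generator discussion collapses. More fundamentally, since coadjoint orbits are even-dimensional and $\dim\gg$ is odd, the Lie algebra $\hg_\KK\rtimes\RR D$ is \emph{never} exact symplectic when $\KK\ne\RR$: there are no open coadjoint orbits, hence no open points in $\Prim(G)$, and the entire codimension-1-nilradical apparatus of Lemma~\ref{extra}, Theorem~\ref{cf-prop7}, and Proposition~\ref{proj} is simply unavailable. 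Your attempt to ``cut down to a 1-dimensional central piece'' cannot repair this, because the center has dimension $m\ge 2$ and the generic coadjoint isotropy in $\gg$ is genuinely positive-dimensional.

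The paper proceeds by a direct and much shorter route that bypasses open points entirely. Using Lemma~\ref{N6N17-lemma}\eqref{N6N17_item3}, the ideal $\Ic\subseteq C^*(H_\KK)$ with spectrum $\hg_\KK^*\setminus\zg^\perp$ is identified with $\Cc_0(\zg^*\setminus\{0\})\otimes\Kc(\Hc_0)$. Since $D\vert_\zg=c\,\id_\zg$ with $c\ne0$, polar coordinates give a homeomorphism $\zg^*\setminus\{0\}\simeq S_{\zg^*}\times\RR$ under which the $\RR$-action becomes translation in the second factor, so
\[
\Ic\rtimes_\alpha\RR\;\simeq\;\Cc(S_{\zg^*})\otimes\bigl(\Cc_0(\RR)\rtimes\RR\bigr)\otimes\Kc(\Hc_0)\;\simeq\;\Cc(S_{\zg^*})\otimes\Kc(\Hc_0),
\]
which visibly contains nonzero projections (the sphere is compact); this is an ideal of $C^*(H_\KK\rtimes_D\RR)$, giving $\Pg(C^*(H_\KK\rtimes_D\RR))\ne\{0\}$. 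For the second assertion the paper simply invokes Theorem~\ref{4n+2}: since $\dim G$ is odd, stably finite is equivalent to stably projectionless, and the projection just exhibited rules this out. No lifting, no Proposition~\ref{proj}, no analysis of $\sigma(\widetilde D)$ is needed.
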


\begin{proof} 
Since $H_\KK$ is a nilpotent Lie group, we may use Kirillov's homeomorphism 
$\widehat{H_\KK}\simeq\hg_\KK^*/H_\KK$ along with \eqref{N6N17_item3} to obtain a short exact sequence 
$$0\to\Ic\hookrightarrow C^*(H_\KK)\to C^*(H_\KK/Z)\to 0$$
where one has $*$-isomorphisms 
$\Ic\simeq \Cc_0(\zg^*\setminus\{0\})\otimes\Kc(\Hc_0)$ 
for some separable infinite-dimensional complex Hilbert space~$\Hc_0$
and $C^*(H_\KK/Z)\simeq \Cc_0(\zg^\perp)$. 
The derivation $D$ gives rise to a group action $\alpha\colon (\hg_\KK^*/H_\KK)\times\RR\to\hg_\KK^*/H_\KK$, $(\Oc_\xi,t)\mapsto \Oc_{\xi\circ\ee^{tD}}$. 
If we fix any real scalar product on $\zg$ and we denote by $S_{\zg^*}$ its corresponding unit sphere, then we have the homeomorphism  
$$S_{\zg^*}\times\RR\to\zg^*\setminus\{0\},\quad (\eta,t)\mapsto \ee^{tc}\eta$$
since $c\in\RR\setminus\{0\}$, 
which gives 
a $*$-isomorphism $\Cc_0(\zg^*\setminus\{0\})\simeq\Cc(S_{\zg^*})\otimes \Cc_0(\RR)$. 
We then obtain $*$-isomorphisms 
\begin{align*}
\Ic\rtimes_\alpha\RR
& \simeq (\Cc_0(\zg^*\setminus\{0\})\otimes\Kc(\Hc_0))\rtimes_\alpha\RR
\simeq \Cc(S_{\zg^*})\otimes (\Cc_0(\RR)\rtimes_c\RR)\otimes\Kc(\Hc_0) \\
& \simeq \Cc(S_{\zg^*})\otimes \Kc(L^2(\RR))\otimes\Kc(\Hc_0) 
 \simeq \Cc(S_{\zg^*})\otimes\Kc(\Hc_0).
\end{align*}
The above crossed product $\Cc_0(\RR)\rtimes_c\RR$ is defined via the group action 
$$\Cc_0(\RR)\times\RR\to \Cc_0(\RR), \quad (\varphi,t)\mapsto \varphi_t, \text{ where  }\varphi_t(s):=\varphi(s\ee^{tc}),$$ 
hence one has a $*$-isomorphism 
$\Cc_0(\RR)\rtimes_c\RR\simeq \Kc(L^2(\RR))$ since $c\in\RR\setminus\{0\}$. 
The above $*$-isomorphisms show that $\Pg(\Ic\rtimes_\alpha\RR)\ne\{0\}$. 
Since $\Ic\rtimes_\alpha\RR$ is an ideal of $C^*(H_\KK)\rtimes_\alpha\RR$, 
and $C^*(H_\KK)\rtimes_\alpha\RR\simeq C^*(H_\KK\rtimes_D\RR)$, we obtain 
$\Pg(C^*(H_\KK \rtimes_D \RR))\ne\{0\}$. 

If moreover $\KK\ne\RR$, then $\dim_\RR\KK\in 2\ZZ$, hence $\dim(H_\KK\rtimes_D\RR)\in 2\ZZ+1$, 
and it follows that $C^*(H_\KK\rtimes_D \RR)$ is not stably finite by Theorem~\ref{4n+2}. 
\end{proof}

\subsection*{Acknowledgment}
We wish to thank the Referee for several remarks that improved the presentation. 

The research of the second-named author was supported by
a grant of the Ministry of Research, Innovation and Digitization, CNCS/CCCDI –
UEFISCDI, project number PN-III-P4-ID-PCE-2020-0878, within PNCDI III.


\begin{thebibliography}{1000000}
	
\bibitem[ArCu20]{ArCu20}
\textsc{D.~Arnal, B.~Currey III}, 
{\it Representations of solvable Lie groups}.  
New Mathematical Monographs, 39. Cambridge University Press, Cambridge, 2020.
	
\bibitem[BkHa20]{BkHa20}
{\sc B.~Bekka, P.~de la Harpe}, 
\textit{Unitary representations of groups, duals, and characters}. 
Mathematical Surveys and Monographs \textbf{250}. 
Amer. Math. Soc, RI, 2020. 


\bibitem[BB18a]{BB18a}
{\sc I.~Belti\c t\u a, D.~Belti\c t\u a},  
$C^*$-dynamical systems of solvable Lie groups. 
\textit{Transform. Groups} \textbf{23} (2018), no. 3, 589--629. 

\bibitem[BB18b]{BB18b}
{\sc I.~Belti\c t\u a, D.~Belti\c t\u a},  
Quasidiagonality of $C^*$-algebras of solvable Lie groups. 
\textit{Integral Equations Operator Theory} \textbf{90} (2018), no. 1, Paper No. 5, 21 pp. 
	
\bibitem[BB21]{BB21}
{\sc I.~Belti\c t\u a, D.~Belti\c t\u a},  
AF-embeddability for Lie groups with $T_1$ primitive ideal spaces. 
\textit{J. Lond. Math. Soc. (2)} 
\textbf{104} (2021), no. 1, 320--340. 


\bibitem[BBL17]{BBL17}
\textsc{I.~Belti\c t\u a, D.~Belti\c t\u a, J.~Ludwig},
Fourier transforms of $C^*$-algebras of nilpotent Lie groups. 
\textit{Int. Math. Res. Not. IMRN} 2017, no. 3, 677--714. 





\bibitem[Bl06]{Bl06}
{\sc B.~Blackadar}, 
\textit{Operator algebras. Theory of $C^*$-algebras and von Neumann algebras}. 
Encyclopaedia of Mathematical Sciences \textbf{122}. 
Operator Algebras and Non-commutative Geometry, III. Springer-Verlag, Berlin, 2006. 


\bibitem[BoMi58]{BoMi58}
{\sc R.~Bott, J.~Milnor}, 
On the parallelizability of the spheres. 
\textit{Bull. Amer. Math. Soc.} \textbf{64} (1958), 87--89. 


\bibitem[Ch83]{Ch83}
\textsc{M.D.~Choi}, 
Lifting projections from quotient $C^*$-algebras.
	\textit{J. Operator Theory} 	 
\textbf{10} (1983), no.~1, 21--30.
	
	
	\bibitem[Co81]{Co81}
\textsc{A.~	Connes},
An analogue of the Thom isomorphism for crossed products of a $C^*$-algebra by an action of $\RR$.
\textit{Adv. in Math.} 
\textbf{39}  (1981), no. 1, 31--55.
	
	
	
\bibitem[Di77]{Di64}
\textsc{J.~Dixmier}, 
\textit{$C^*$-algebras.} 
 North-Holland Mathematical Library \textbf{15}, 
 North-Holland Publishing Co., Amsterdam-New York-Oxford, 1977. 




\bibitem[Ell76]{Ell76}
\textsc{G.A.~Elliott}, 
Automorphisms determined by multipliers on ideals of a $C^*$-algebra. 
\textit{J. Functional Analysis} \textbf{23} (1976), no. 1, 1--10.


\bibitem[ENN93]{ENN93}
\textsc{G.A.~Elliott, T.~Natsume, R.~Nest,}
 The Heisenberg group and $K$-theory. 
 \textit{$K$-Theory} \textbf{7} (1993), no. 5, 409--428. 



\bibitem[Ga20]{Ga20}
\textsc{J.~Gabe}, 
Traceless AF embeddings and unsuspended E-theory. 
\textit{Geom. Funct. Anal.} \textbf{30} (2020), no. 2, 323--333. 

\bibitem[GL86]{GL86}
\textsc{E.C.~Gootman, A.J.~Lazar}, 
Crossed products of type ${\rm I}$ AF algebras by abelian groups. 
\textit{Israel J. Math.} 56 (1986), no. 3, 267--279. 

\bibitem[Gr80]{Gr80}
\textsc{Ph.~Green}, 
Square-integrable representations and the dual topology. 
\textit{J. Functional Analysis} \textbf{35} (1980), no. 3, 279--294.


\bibitem[GKT92]{GKT92}
	 \textsc{K.~Gr\"ochenig, E.~Kaniuth, K.F.~Taylor}, 
	 Compact open sets in duals and projections in $L^1$-algebras of certain semi-direct product groups. 
	 \textit{Math. Proc. Cambridge Philos. Soc.} \textbf{111} (1992), no. 3, 545--556.
	
\bibitem[Ka80]{Ka80}
\textsc{G.G~Kasparov}, 
The operator $K$-functor and extensions of  $C^*$-algebras. (Russian) 
\textit{Izv. Akad. Nauk SSSR Ser. Mat.}  \textbf{44} (1980), no. 3, 571--636, 719.
	


\bibitem[KT96]{KT96}
\textsc{E.~Kaniuth, K.F.~Taylor}, 
Minimal projections in $L^1$-algebras and open points in the dual spaces of semi-direct product groups. 
\textit{J. London Math. Soc. (2)} \textbf{53} (1996), no. 1, 141--157. 

\bibitem[KR00]{KR00}
 \textsc{E.~Kirchberg, M.~R\o rdam},  
 Non-simple purely infinite $C^*$-algebras. 
 \textit{Amer. J. Math.} \textbf{122} (2000), no. 3, 637--666.


\bibitem[Pi99]{Pi99}
\textsc{M.V.~Pimsner}, 
Embedding covariance algebras of flows into AF-algebras. 
\textit{Ergodic Theory Dynam. Systems} \textbf{19}  (1999), no. 3, 723--740. 

\bibitem[RaRo88]{RaRo88}
{\sc I.~Raeburn, J.~Rosenberg}, 
Crossed products of continuous-trace $C^*$-algebras by smooth actions. 
\textit{Trans. Amer. Math. Soc.} \textbf{305} (1988), no. 1, 1--45. 


\bibitem[RaWi98]{RaWi98}
\textsc{I.~Raeburn, D.P.~Williams}, 
\textit{Morita equivalence and continuous-trace $C^*$-algebras}. 
Mathematical Surveys and Monographs, 60. American Mathematical Society, Providence, RI, 1998.

\bibitem[Ri82]{Ri82}
\textsc{M.A.~Rieffel}, 
Connes' analogue for crossed products of the Thom isomorphism. 
In: R.~Douglas, C.~Schochet (eds.), \textit{Operator algebras and K-theory}, 
Contemp. Math., 10, Amer. Math. Soc., Providence, R.I., 1982, pp. 143--154. 


\bibitem[Ri89]{Ri89}
\textsc{M.A.~Rieffel},
Continuous fields of $C^*$-algebras coming from group cocycles and actions. 
\textit{Math. Ann.}  \textbf{283} (1989), no. 4, 631--643.


\bibitem[Ros78]{Ros78}
\textsc{J.~Rosenberg},
  Square-integrable factor representations of locally compact groups. 
  \textit{Trans. Amer. Math. Soc.} 
  \textbf{237} (1978), 1–-33.

\bibitem[Ros16]{Ros16}
\textsc{J.~Rosenberg}, 
Structure and applications of real $C^*$-algebras. 
In: R.S. Doran, E. Park (eds.), \textit{Operator algebras and their applications}, Contemp. Math., 671, Amer. Math. Soc., Providence, RI, 2016, pp.~235--258. 

\bibitem[RoSc87]{RoSc87}
\textsc{J.~Rosenberg, C.~Schochet}, 
The K\"unneth theorem and the universal coefficient theorem for Kasparov's generalized $K$-functor.
\textit{Duke Math. J.} \textbf{55} (1987), no. 2, 431--474.


\bibitem[RLL00]{RLL00}
\textsc{M.~R\o rdam, F.~Larsen, N.~Laustsen}, 
\textit{An introduction to $K$-theory for $C^*$-algebras}. 
London Mathematical Society Student Texts, 49. Cambridge University Press, Cambridge, 2000. 

\bibitem[Sp88]{Sp88}
\textsc{J.~Spielberg}, 
Embedding $C^*$-algebra extensions into AF algebras. 
\textit{J. Funct. Anal.} \textbf{81} (1988), no. 2, 325--344.




\bibitem[Wi07]{Wi07}
{\sc D.P.~Williams}, \textit{Crossed products of $C^*$-algebra}. 
Mathematical Surveys and Monographs \textbf{134}. 
American Mathematical Society, Providence, RI, 2007. 
	
\end{thebibliography}
\end{document}